\documentclass[11pt]{article}
\usepackage[active]{srcltx}
\usepackage[applemac]{inputenc}
\usepackage{amsmath,amsthm}
\usepackage{amsfonts,txfonts,fontenc}
\usepackage{amssymb}
\usepackage[colorlinks=true,urlcolor=blue,
citecolor=red,linkcolor=blue,linktocpage,pdfpagelabels,bookmarksnumbered,bookmarksopen]{hyperref}
\usepackage{graphicx, tikz}

\tikzset { domaine/.style 2 args={domain=#1:#2} }
\tikzset{
xmin/.store in=\xmin, xmin/.default=-3, xmin=-3,
xmax/.store in=\xmax, xmax/.default=3, xmax=3,
ymin/.store in=\ymin, ymin/.default=-3, ymin=-3,
ymax/.store in=\ymax, ymax/.default=3, ymax=3,
}



\usepackage[english]{babel}
\usepackage{marginnote}
\usepackage[left=2.95cm,right=2.95cm,top=2.8cm,bottom=2.8cm]{geometry}
\numberwithin{equation}{section}
\usepackage{mathrsfs}
\usepackage{color}
\usepackage{amsmath, bbm}
\usepackage{amsfonts}
\usepackage{amssymb}
\usepackage{graphicx}%
\providecommand{\U}[1]{\protect\rule{.1in}{.1in}}
\definecolor{linkcolor}{rgb}{0.00,0.50,0.00}
\providecommand{\U}[1]{\protect\rule{.1in}{.1in}}
\textwidth 16cm \textheight 21cm \topmargin 0cm
\oddsidemargin 0.5cm
\evensidemargin 0cm
\newtheorem{theorem}{Theorem}[section]
\newtheorem{proposition}[theorem]{Proposition}
\newtheorem{lemma}[theorem]{Lemma}

\newtheorem{definition}{Definition}[section]
\newtheorem{remark}{Remark}[section]

\numberwithin{equation}{section}

\newcommand{\pical}{\mathcal{P}}
\newcommand{\dd}{\mathrm{d}}

\newcommand{\res}{\mathop{\hbox{\vrule height 7pt width .5pt depth 0pt \vrule height .5pt width 6pt depth 0pt}}\nolimits}
\newcommand{\deb}{\rightharpoonup}

\newcommand{\haus}{\mathcal H}

\newcommand{\vv}{\mathbf{v}}
\newcommand{\uu}{\mathbf{u}}

\newcommand\ip{\overset{\circ}}

\newcommand{\M}{\mathcal M}
\newcommand{\Wc}{\mathcal{T}_c}
\newcommand{\lcal}{\mathcal L}

\newcommand{\pp}{\mathcal{P}_p(\Omega)}
\newcommand{\WW}{\mathbb W}

\newcommand{\mm}{\mathfrak m}

\newcommand{\id}{id}
\newcommand{\impl}{\Rightarrow}
\newcommand{\ve}{\varepsilon}

\newcommand{\R}{\mathbb R}

\newcommand{\nn}{\mathbf{n}}
\newcommand{\T}{\mathrm{T}}

\DeclareMathOperator{\argmin}{argmin}
\DeclareMathOperator{\spt}{spt}
\DeclareMathOperator{\Lip}{Lip}

\DeclareMathOperator{\len}{Length}
\DeclareMathOperator{\diam}{diam}

\def\spt{{\rm{spt}}} 
\def\dd{{\rm d}} 




\usepackage{babel}
\reversemarginpar

\title{$\big\{$Euclidean, Metric, and Wasserstein$\big\}$ Gradient Flows:\\ an overview}
\author{Filippo Santambrogio\thanks{\scriptsize\ Laboratoire de Math\'ematiques d'Orsay, Univ. Paris-Sud, CNRS, Universit\'e Paris-Saclay, 91405 Orsay Cedex, France,
\texttt{filippo.santambrogio@math.u-psud.fr, http://www.math.u-psud.fr/$\sim$santambr}}}
\date{}
\begin{document}
\maketitle

\begin{abstract}This is an expository paper on the theory of gradient flows, and in particular of those PDEs which can be interpreted as gradient flows for the Wasserstein metric on the space of probability measures (a distance induced by optimal transport). The starting point is the Euclidean theory, and then its generalization to metric spaces, according to the work of Ambrosio, Gigli and Savar\'e. Then comes an independent exposition of the Wasserstein theory, with a short introduction to the optimal transport tools that are needed and to the notion of geodesic convexity, followed by a precise desciption of the Jordan-Kinderleher-Otto scheme, with proof of convergence in the easiest case: the linear Fokker-Planck equation. A discussion of other gradient flows PDEs and of numerical methods based on these ideas is also provided. The paper ends with a new, theoretical, development, due to Ambrosio, Gigli, Savar\'e, Kuwada and Ohta: the study of the heat flow in metric measure spaces.
\end{abstract}

\medskip\noindent
{\bf AMS Subject Classification (2010):} 00-02, 34G25, 35K05, 49J45, 49Q20, 49M29, 54E35

\bigskip\noindent
{\bf Keywords:} Cauchy problem, Subdifferential, Analysis in metric spaces, Optimal transport, Wasserstein distances, Heat flow, Fokker-Planck equation, Numerical methods, Contractivity, Metric measure spaces
\tableofcontents

\section{Introduction}

{\it Gradient flows}, or {\it steepest descent curves}, are a very classical topic in evolution equations: take a functional $F$ defined on a vector space $X$, and, instead of looking at points $x$ minizing $F$ (which is related to the statical equation $\nabla F(x)=0$), we look, given an initial point $x_0$, for a curve starting at $x_0$ and trying to minimize $F$ as fast as possible (in this case, we will solve equations of the form $x'(t)=-\nabla F(x(t))$). As we speak of gradients (which are element of $X$, and not of $X'$ as the differential of $F$ should be), it is natural to impose that $X$ is an Hilbert space (so as to identify it with its dual and produce a gradient vector). In the finite-dimensional case, the above equation is very easy to deal with, but also the infinite-dimensional case is not so exotic. Indeed, just think at the evolution equation $\partial_t u =\Delta u$, which is the evolution variant of the statical Laplace equation $-\Delta u=0$. In this way, the Heat equation is the gradient flow, in the $L^2$ Hilbert space, of the Dirichlet energy $F(u)=\frac12\int|\nabla u|^2$, of which $-\Delta u$ is the gradient in the appropriate sense (more generally, one could consider equations of the form $\partial_t u =\delta F/\delta u$, where this notation stands for the first variation of $F$).

But this is somehow classical\dots\ The renovated interest for the notion of gradient flow arrived between the end of the 20th century and the beginning of the 21st, with the work of Jordan, Kinderleherer and Otto (\cite{JKO}) and then of Otto \cite{Otto porous}, who saw a gradient flow structure in some equations of the form $\partial_t\varrho-\nabla\cdot(\varrho \vv)=0$, where the vector field $\vv$ is given by $\vv=\nabla[\delta F/\delta \varrho]$. This requires to use the space of probabilities $\varrho$ on a given domain, and to endow it with a non-linear metric structure, derived from the theory of optimal transport. This theory, initiated by Monge in the 18th century (\cite{Monge}), then developed by Kantorovich in the '40s (\cite{Kantorovich}), is now well-established (many texts present it, such as \cite{villani,villani06,OTAM}) and is intimately connected with PDEs of the form of the {\it continuity equation} $\partial_t\varrho-\nabla\cdot(\varrho \vv)=0$.

The turning point for the theory of gradient flows and for the interest that researchers in PDEs developed for it was for sure the publication of \cite{AmbGigSav}. This celebrated book established a whole theory on the notion of gradient flow in metric spaces, which requires careful definitions because in the equation $x'(t)=-\nabla F(x(t))$, neither the term $x'$ nor $\nabla F$ make any sense in this framework. For existence and - mainly - uniqueness results, the notion of geodesic convexity (convexity of a functional $F$ defined on a metric space $X$, when restricted to the geodesic curves of $X$) plays an important role. Then the theory is particularized in the second half of \cite{AmbGigSav} to the case of the metric space of probability measures endowed with the so-called Wasserstein distance coming from optimal transport, whose differential structure is widely studied in the book. In this framework, the geodesic convexity results that McCann obtained in \cite{MC} are crucial to make a bridge from the general to the particular theory.

It is interesting to observe that, finally, the heat equation turns out to be a gradient flow in two different senses: it is the gradient flow of the Dirichlet energy in the $L^2$ space, but also of the entropy $\int\varrho\log(\varrho)$ in the Wasserstein space. Both frameworks can be adapted from the particular case of probabilities on a domain $\Omega\subset\R^d$ to the more general case of metric measure spaces, and the question whether the two flows coincide, or under which assumptions they do, is natural. It has been recently studied by Ambrosio, Gigli, Savar\'e and new collaborators (Kuwada and Ohta) in a series of papers (\cite{GIGLIHeat,GigKuwOht,AmbGigSav-heat}), and has been the starting point of recent researches on the differential structure of metric measure spaces.

The present survey, which is an extended, updated, and English version of a Bourbaki seminar given by the author in 2013 (\cite{Bourbaki2013}; the reader will also remark that most of the extensions are essentially taken from \cite{OTAM}), aims at giving an overview of the whole theory. In particular, among the goals, there is at the same time to introduce the tools for studying metric gradient flows, but also to see how to deal with Wasserstein gradient flows without such a theory. This could be of interest for applied mathematicians, who could be more involved in the specific PDEs that have this gradient flow form, without a desire for full generality; for the same audience, a section has been added about numerical methods inspired from the so-called JKO (Jordan-Kinderleherer-Otto) scheme, and one on a list of equations which fall into these framework. {\it De facto}, more than half of the survey is devoted to the theory in the Wasserstein spaces and full proofs are given in the easiest case.

The paper is organized as follows: after this introduction, Section \ref{2} exposes the theory in the Euclidean case, and presents which are the good definitions which can be translated into a metric setting; Section \ref{metricth} is devoted to the general metric setting, as in  the first half of \cite{AmbGigSav}, and is quite expository (only the key ingredients to obtain the proofs are sketched); Section \ref{W2} is the longest one and develops the Wasserstein setting: after an introduction to optimal transport and to the Wasserstein distances, there in an informal presentation of the equations that can be obtained as gradient flows, a discussion of the functionals which have geodesic convexity properties, a quite precise proof of convergence in the linear case of the Fokker-Planck equation, a discussion about the other equations and functionals which fit the framework and about boundary conditions, and finally a short section about numerics. Last but not least, Section \ref{heat} gives a very short presentation of the fascinating topic of heat flows in arbitraty metric measure spaces, with reference to the interesting implications that this has in the differential structure of these spaces.

This survey is meant to be suitable for readers with different backgrounds and interests. In particular, the reader who is mainly interested gradient flows in the Wasserstein space and in PDE applications can decide to skip sections \ref{metricth}, \ref{geodconvW2} and \ref{heat}, which deal on the contrary with key objects for the - very lively at the moment - subject of analysis on metric measure spaces.

\section{From Euclidean to Metric}\label{2}

\subsection{Gradient flows in the Euclidean space}

Before dealing with gradient flows in general metric spaces, the best way to clarify the situation is to start from the easiest case, i.e. what happens in the Euclidean space $\R^n$. Most of what we will say stays true in an arbitrary Hilbert space, but we will stick to the finite-dimensional case for simplicity.

Here, given a function  $F:\R^n\to \R$, smooth enough, and a point $x_0\in\R^n$, a gradient flow is just defined as a curve $x(t)$, with starting point at $t=0$ given by $x_0$, which moves by choosing at each instant of time the direction which makes the function $F$ decrease as much as possible. More precisely, we consider the solution of the {\it Cauchy Problem}
\begin{equation}\label{nabF}
\begin{cases}x'(t)=-\nabla F(x(t))&\mbox{ for }t>0,\\
			x(0)=x_0.	\end{cases}
			\end{equation}
This is a standard Cauchy problem which has a unique solution if $\nabla F$ is Lipschitz continuous, i.e. if $F\in C^{1,1}$. We will see that existence and uniqueness can also hold without this strong assumption, thanks to the variational structure of the equation. 

A first interesting property is the following, concerning uniqueness and estimates. We will present it in the case where $F$ is convex, which means that it could be non-differentiable, but we can replace the gradient with the subdifferential. More precisely, we can consider instead of \eqref{nabF}, the following differential inclusion: we look for an absolutely continuous curve $x:[0,T]\to\R^n$ such that
\begin{equation}\label{subnabF}
\begin{cases}x'(t)\in-\partial F(x(t))&\mbox{ for a.e. }t>0,\\
			x(0)=x_0,	\end{cases}
			\end{equation}
where $\partial F(x)=\{p\in\R^n\,:\,F(y)\geq F(x)+p\cdot(y-x)\;\mbox{ for all }y\in\R^n\}$. We refer to \cite{Rockafellar} for all the definitions and notions from convex analysis that could be needed in the following, and we recall that, if $F$ is differentiable at $x$, we have $\partial F(x)=\{\nabla F(x)\}$ and that $F	$ is differentiable at $x$ if and only if $\partial F$ is a singleton. Also note that $\partial F(x)$ is always a convex set, and is not empty whenever $F$ is real-valued (or $x$ is in the interior of $\{x\,:\,F(x)<+\infty\}$), and we denote by $\partial^\circ F(x)$ its element of minimal norm. 
\begin{proposition}\label{convex case}
Suppose that $F$ is convex and let $x_1$ and $x_2$ be two solutions of \eqref{subnabF}. Then we have
$|x_1(t)-x_2(t)|\leq |x_1(0)-x_2(0)|$ for every $t$.
In particular this gives uniqueness of the solution of the Cauchy problem.
\end{proposition}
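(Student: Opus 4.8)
The plan is to show that the function $t\mapsto |x_1(t)-x_2(t)|^2$ is non-increasing, by computing its derivative and using the monotonicity of the subdifferential of a convex function. This is the standard Gronwall-type argument adapted to the differential inclusion setting.

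First I would note that since both $x_1$ and $x_2$ are absolutely continuous, so is the scalar function $E(t):=\frac12|x_1(t)-x_2(t)|^2$, and hence it is differentiable at almost every $t$ with
\begin{equation*}
E'(t)=(x_1(t)-x_2(t))\cdot(x_1'(t)-x_2'(t)).
\end{equation*}
At a point $t$ where both inclusions in \eqref{subnabF} hold and where $E$ is differentiable, I can write $x_1'(t)=-p_1$ and $x_2'(t)=-p_2$ with $p_1\in\partial F(x_1(t))$ and $p_2\in\partial F(x_2(t))$, so that
\begin{equation*}
E'(t)=-(x_1(t)-x_2(t))\cdot(p_1-p_2).
\end{equation*}

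The key step is the monotonicity of the subdifferential: for a convex $F$, if $p_1\in\partial F(y_1)$ and $p_2\in\partial F(y_2)$ then $(p_1-p_2)\cdot(y_1-y_2)\geq 0$. This follows immediately by writing the two subgradient inequalities $F(y_2)\geq F(y_1)+p_1\cdot(y_2-y_1)$ and $F(y_1)\geq F(y_2)+p_2\cdot(y_1-y_2)$ and summing them. Applying this with $y_i=x_i(t)$ gives $(x_1(t)-x_2(t))\cdot(p_1-p_2)\geq 0$, hence $E'(t)\leq 0$ for a.e.\ $t$. Since $E$ is absolutely continuous and has a.e.\ non-positive derivative, it is non-increasing, which yields $|x_1(t)-x_2(t)|\leq|x_1(0)-x_2(0)|$ for every $t$. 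Taking $x_1(0)=x_2(0)$ then forces $x_1\equiv x_2$, giving uniqueness.

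The main obstacle is not the monotonicity inequality, which is elementary, but rather the measure-theoretic bookkeeping: one must be careful that the differential inclusion only holds for a.e.\ $t$, and that $E'(t)=(x_1-x_2)\cdot(x_1'-x_2')$ at a.e.\ point requires justifying the chain rule for the composition of the smooth quadratic map with absolutely continuous curves. This is standard (absolutely continuous curves are differentiable a.e.\ and the fundamental theorem of calculus applies), so the only genuine content is the monotonicity of $\partial F$; everything else is routine once one works on the full-measure set where all relevant derivatives and inclusions simultaneously hold.
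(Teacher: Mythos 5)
Your proof is correct and follows essentially the same route as the paper's: differentiate $\tfrac12|x_1(t)-x_2(t)|^2$, invoke the monotonicity of the subdifferential of a convex function, and conclude that this quantity is non-increasing. The extra measure-theoretic care you add (working on the full-measure set where the inclusions and derivatives hold simultaneously) is a welcome refinement of the same argument, not a different one.
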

\begin{proof}
Let us consider $g(t)=\frac 12|x_1(t)-x_2(t)|^2$ and differentiate it. We have 
$$g'(t)=(x_1(t)-x_2(t))\cdot(x'_1(t)-x'_2(t)).$$
Here we use a basic property of gradient of convex functions, i.e. that for every $x_1,x_2,p_1,p_2$ with $p_i\in\partial F(x_i)$, we have
$$(x_1-x_2)\cdot (p_1-p_2)\geq 0.$$
From these considerations, we obtain $g'(t)\leq 0$
and $g(t)\leq g(0)$. This gives the first part of the claim.

Then, if we take two different solutions of the same Cauchy problem, we have $x_1(0)=x_2(0)$, and this implies $x_1(t)=x_2(t)$ for any $t>0$. 
\end{proof}

We can also stuy the case where $F$ is semi-convex. We recall that $F$ semi-convex means that it is $\lambda$-convex for some $\lambda\in\R$ i.e. $x\mapsto F(x)-\frac\lambda 2 |x|^2$ is convex. For $\lambda>0$ this is stronger than convexity, and for $\lambda<0$ this is weaker. Roughly speaking, $\lambda$-convexity corresponds to $D^2F\geq \lambda I$. Functions which are $\lambda$-convex for some $\lambda$ are called {\it semi-convex}. The reason of the interest towards semi-convex functions lies in the fact that on the one hand, as the reader will see throughout the exposition, the general theory of gradient flows applies very well to this class of functions and that, on the other hand, they are general enough to cover many interesting cases. In particular, on a bounded set, all smooth ($C^2$ is enough) functions are $\lambda$-convex for a suitable $\lambda<0$. 

For $\lambda$-convex functions, we can define their subdifferential as follows
$$\partial F(x)=\left\{p\in\R^n\,:\,F(y)\geq F(x)+p\cdot(y-x)+\frac\lambda 2|y-x|^2 \;\mbox{ for all }y\in\R^n\right\}.$$	
This definition is consistent with the above one whenever $\lambda\geq 0$ (and guarantees $\partial F(x)\neq\emptyset$ for $\lambda<0$). Also, one can check that, setting $\tilde F(x)=F(x)-\frac\lambda 2 |x|^2$, this definition coincides with $\{p\in\R^n\,:\,p-\lambda x\in \partial \tilde F(x)\}$. Again, we define $\partial^\circ F$ the element of minimal norm of $\partial F$.
\begin{remark}
From the same proof of Proposition \ref{convex case}, one can also deduce uniqueness and stability estimates in the case where $F$ is $\lambda$-convex. Indeed, in this case we obtain $|x_1(t)-x_2(t)|\leq |x_1(0)-x_2(0)|e^{-\lambda t},$ which also proves, if $\lambda>0$, exponential convergence to the unique minimizer of $F$. The key point is that, if $F$ is $\lambda$-convex  it is easy to prove that $x_1,x_2,p_1,p_2$ with $p_i\in\partial F(x_i)$ provides
$$(x_1-x_2)\cdot (p_1-p_2)\geq \lambda|x_1-x_2|^2.$$
This implies $g'(t)\leq -2\lambda g(t)$ and allows to conclude, by Gronwall's lemma, $g(t)\leq g(0) e^{-2\lambda t}$. For the exponential convergence,  if $\lambda>0$ then $F$ is coercive and admits a minimizer, which is unique by strict convexity. Let us call it $\bar x$. Take a solution $x(t)$ and compare it to the constant curve $\bar x$, which is a solution since $0\in \partial F(\bar x)$. Then we get $|x_1(t)-\bar x|\leq e^{-\lambda t}|x_1(0)-\bar x|$.
\end{remark}
%
%

Another well-known fact about the $\lambda$-convex case is the fact that the differential inclusion $x'(t)\in-\partial F(x(t))$ actually becomes, a.e., an equality: $x'(t)=-\partial^\circ F(t)$. More precisely, we have the following.

\begin{proposition}\label{minimal norm}
Suppose that $F$ is $\lambda$-convex and let $x$ be a solutions of \eqref{subnabF}. Then, for all the times $t_0$ such that both $t\mapsto x(t)$ and $t\mapsto F(x(t))$ are differentiable at $t=t_0$, the subdifferential $\partial F(x(t_0))$ is contained in a hyperplane orthogonal to $x'(t_0)$. In particular, we have $x'(t)=-\partial^\circ F(x(t))$ for a.e. $t$.
\end{proposition}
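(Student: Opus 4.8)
The plan is to extract a chain-rule identity from the defining inequality of the subdifferential, and then read off both conclusions from it. Fix a time $t_0$ at which $x(\cdot)$ and $F(x(\cdot))$ are both differentiable, and let $p\in\partial F(x(t_0))$ be arbitrary. The whole argument rests on comparing $F$ at the point $x(t_0)$ with its values along the curve at nearby times, so that the curve itself plays the role of the test points in the definition of $\partial F$.

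First I would insert $y=x(t_0+h)$ into the $\lambda$-convexity inequality
$$F(y)\geq F(x(t_0))+p\cdot(y-x(t_0))+\frac{\lambda}{2}|y-x(t_0)|^2,$$
obtaining $F(x(t_0+h))-F(x(t_0))\geq p\cdot(x(t_0+h)-x(t_0))+\frac{\lambda}{2}|x(t_0+h)-x(t_0)|^2$. Dividing by $h$ and letting $h\to 0^+$ and then $h\to 0^-$ (the inequality reversing in the second case), I would use differentiability of both $x$ and $F\circ x$ at $t_0$ to pass to the limit; the quadratic term contributes nothing since $|x(t_0+h)-x(t_0)|^2/h\sim h\,|x'(t_0)|^2\to 0$ for either sign of $h$. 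The two one-sided limits sandwich the derivative of $F\circ x$ and yield the identity $\frac{d}{dt}F(x(t))|_{t=t_0}=p\cdot x'(t_0)$. Since the left-hand side does not depend on $p$, every $p\in\partial F(x(t_0))$ has the same inner product with $x'(t_0)$, which is exactly the assertion that $\partial F(x(t_0))$ lies in a hyperplane orthogonal to $x'(t_0)$.

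For the ``in particular'' statement I would first argue that the set of good times has full measure: $x$ is differentiable a.e. since it is absolutely continuous; $F\circ x$ is differentiable a.e. because a $\lambda$-convex $F$ is locally Lipschitz (being a convex function plus a smooth quadratic), so $F\circ x$ is absolutely continuous; and the inclusion $-x'(t_0)\in\partial F(x(t_0))$ from \eqref{subnabF} holds a.e. At any time $t_0$ satisfying all three, the subdifferential is contained in the hyperplane $H=\{p:p\cdot x'(t_0)=c\}$ with $c=-|x'(t_0)|^2$ (using the admissible choice $p=-x'(t_0)$). The point of $H$ closest to the origin is its orthogonal projection $\frac{c}{|x'(t_0)|^2}x'(t_0)=-x'(t_0)$, and this point already belongs to $\partial F(x(t_0))$; hence it is the minimal-norm element, i.e. $\partial^\circ F(x(t_0))=-x'(t_0)$, which is the claim (the case $x'(t_0)=0$ being trivial, since then $0\in\partial F(x(t_0))$).

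The main obstacle is not the geometry but the analytic groundwork behind the ``in particular'' clause: one must know that $t\mapsto F(x(t))$ is differentiable almost everywhere, which is precisely what makes the chain-rule identity usable on a full-measure set. I would secure this through absolute continuity, combining local Lipschitzness of $F$ with the absolute continuity of $x$; the remaining limit passages and the vanishing of the quadratic remainder are then routine.
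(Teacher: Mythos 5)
Your proof is correct and follows essentially the same route as the paper: both exploit that the subdifferential inequality, tested along the curve itself, becomes an equality at $t=t_0$, so the first-order condition yields $\frac{d}{dt}F(x(t))|_{t=t_0}=p\cdot x'(t_0)$ for every $p$, and then the minimal-norm element is identified as the orthogonal projection of $0$ onto the hyperplane, which coincides with $-x'(t_0)\in\partial F(x(t_0))$. Your explicit one-sided difference quotients, the treatment of the case $x'(t_0)=0$, and the justification of a.e.\ differentiability of $F\circ x$ via local Lipschitzness are just slightly more detailed renderings of the same steps.
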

\begin{proof}
Let $t_0$ be as in the statement, and $p\in\partial F(x(t_0))$. From the definition of subdifferential, for every $t$ we have 
$$F(x(t))\geq F(x(t_0))+p\cdot (x(t)-x(t_0))+\frac{\lambda}2|x(t)-x(t_0)|^2,$$
but this inequality becomes an equality for $t=t_0$. Hence, the quantity 
$$F(x(t))- F(x(t_0))-p\cdot (x(t)-x(t_0))-\frac{\lambda}2|x(t)-x(t_0)|^2$$ 
is minimal for $t=t_0$ and, differentiating in $t$ (which is possible by assumption), we get
$$\frac{d}{dt}F(x(t))_{|t=t_0}=p\cdot x'(t_0).$$
Since this is true for every  $p\in\partial F(x(t_0))$, this shows that $\partial F(x(t_0))$ is contained in a hyperplane of the form $\{p\,:\,p\cdot x'(t_0)=const\}$. 

Whenever $x'(t_0)$ belongs to $\partial F(x(t_0))$ (which is true for a.e. $t_0$), this shows that $x'(t_0)$ is the orthogonal projection of $0$ onto $\partial F(x(t_0))$ and onto the hyperplane which contains it, and hence its element of minimal norm. This provides $x'(t_0)=-\partial^\circ F(x(t_0))$ for a.e. $t_0$, as the differentiability of $x$ and of $F\circ x$ are also true a.e., since $x$ is supposed to be absolutely continuous and $F$ is locally Lipschitz.
 \end{proof}

Another interesting feature of those particular Cauchy problems which are gradient flows is their discretization in time. Actually, one can fix a small time step parameter $\tau>0$ and look for a sequence of points $(x^\tau_k)_k$ defined through the iterated scheme, called {\it Minimizing Movement Scheme},
\begin{equation}\label{MMeu}
x^\tau_{k+1}\in\argmin_x F(x)+\frac{|x-x^\tau_k|^2}{2\tau}.
\end{equation}
We can forget now the convexity assumption on $F$, which are not necessary for this part of the analysis. Indeed, very mild assumptions on $F$ (l.s.c. and some lower bounds, for instance $F(x)\geq C_1-C_2|x|^2$) are sufficient to guarantee that these problems admit a solution for small $\tau$. The case where $F$ is $\lambda$-convex is covered by these assumptions, and also provides uniqueness of the minimizers. This is evident if $\lambda>0$ since we have strict convexity for every $\tau$, and if $\lambda$ is negative the sum will be strictly convex for small $\tau$.
 
We can interpret this sequence of points as the values of the curve $x(t)$ at times $t=0,\tau,2\tau,\dots, k\tau,\dots$. It happens that the optimality conditions of the recursive minimization exactly give a connection between these minimization problems and the equation, since we have
$$x^\tau_{k+1}\in \argmin F(x)+\frac{|x-x^\tau_k|^2}{2\tau}\quad\impl\quad \nabla F(x^\tau_{k+1})+\frac{x^\tau_{k+1}-x^\tau_k}{\tau}=0,$$
i.e.
$$ \frac{x^\tau_{k+1}-x^\tau_k}{\tau}=-  \nabla F(x^\tau_{k+1}).$$
This expression is exactly the discrete-time {\it implicit Euler scheme} for $x'=-\nabla F(x)$! (note that in the convex non-smooth case this becomes $ \frac{x^\tau_{k+1}-x^\tau_k}{\tau}\in-  \partial F(x^\tau_{k+1})$).

We recall that, given an ODE $x'(t)=\vv(x(t))$ (that we take autonomous for simplicity), with given initial datum $x(0)=x_0$, Euler schemes are time-discretization where derivatives are replaced by finite differences. We fix a time step $\tau>0$ and define a sequence $x^\tau_k$. The explicit scheme is given by
$$x^\tau_{k+1}=x^\tau_k+\tau  \vv(x^\tau_{k}),\quad x^\tau_0=x_0,$$
while the implicit scheme is given by 
$$x^\tau_{k+1}=x^\tau_k+\tau  \vv(x^\tau_{k+1}),\quad x^\tau_0=x_0.$$
This means that $x^\tau_{k+1}$ is selected as a solution of an equation involving $x^\tau_k$, instead of being explicitly computable from $x^\tau_k$. The explicit scheme is obviously easier to implement, but enjoys less stability and qualitative properties than the implicit one. Suppose for instance $ \vv=-\nabla F$: then the quantity $F(x(t))$ decreases in $t$ in the continuous solution, which is also the case for the implicit scheme, but not for the explicit one (which represents the iteration of the gradient method for the minimization of $F$). 
Note that the same can be done for evolution PDEs, and that solving the Heat equation $\partial_t\varrho=\Delta\varrho_t$
by an explicit scheme is very dangerous: at every step, $\varrho^\tau_{k+1}$ would have two degrees of regularity less than $\varrho^\tau_{k}$, since it would be obtained through $\varrho^\tau_{k+1}=\varrho^\tau_{k}-\tau\Delta\varrho^\tau_{k}$. 

It is possible to prove that, for $\tau\to 0$, the sequence we found, suitably interpolated, converges to the solution of Problem \eqref{subnabF}.
We give here below the details of this argument, as it will be the basis of the argument that we will use in Section \ref{W2}.

First, we define two different interpolations of the points $x^\tau_k$. Let us define two curves $x^\tau,\tilde x^\tau:[0,T]\to \R^n$ as follows: first we define 
$$\vv^\tau_{k+1}:= \frac{x^\tau_{k+1}-x^\tau_k}{\tau},$$
then we set
$$x^\tau(t)=x^\tau_{k+1} \qquad \tilde x^\tau(t)=x^\tau_{k}+(t-k\tau)\vv^\tau_{k+1} \,\mbox{ for }t\in ]k\tau,(k+1)\tau].$$
Also set 
$$\vv^\tau(t)=\vv^\tau_{k+1} \,\mbox{ for }t\in ]k\tau,(k+1)\tau].$$
It is easy to see that $\tilde x^\tau$ is a continuous curve, piecewise affine (hence absolutely continuous), satisfying $(\tilde x^\tau)'=\vv^\tau$. On the contrary, $x^\tau$ is not continuous, but satisfies by construction $\vv^\tau(t)\in -\partial F(x^\tau(t))$.

The iterated minimization scheme defining $x^\tau_{k+1}$ provides the estimate
 \begin{equation}\label{estimation compac-eucl}
F(x^\tau_{k+1})+\frac{|x^\tau_{k+1}-x^\tau_k|^2}{2\tau}\leq F(x^\tau_{k}),
\end{equation}
obtained comparing the optimal point $x^\tau_{k+1}$  to the previous one.
If $F(x_0)<+\infty$ and $\inf F>-\infty$, summing over $k$ we get 
\begin{equation}\label{1sttimeH1}
\sum_{k=0}^\ell \frac{|x^\tau_{k+1}-x^\tau_k|^2}{2\tau}\leq \left(F(x^\tau_{0})-F(x^\tau_{\ell+1})\right)\leq C.
\end{equation}
This is valid for every $\ell$, and we can arrive up to $\ell=\lfloor T/\tau\rfloor$. Now, note that 
$$\frac{|x^\tau_{k+1}-x^\tau_k|^2}{2\tau}=\tau\left(\frac{|x^\tau_{k+1}-x^\tau_k|}{2\tau}\right)^2=\tau |\vv^\tau_k|^2=\int_{k\tau}^{(k+1)\tau}|(\tilde x^\tau)'(t)|^2dt.$$
This means that we have 
\begin{equation}\label{H1compEucl}
\int_0^T\frac 12|(\tilde x^\tau)'(t)|^2dt\leq C
\end{equation}
and hence $\tilde x^\tau$ is bounded in $H^1$ and $\vv^\tau$ in $L^2$. The injection $H^1\subset C^{0,1/2}$ provides an equicontinuity bound on $\tilde x^\tau$ of the form 
\begin{equation}\label{H1compEucl2}
|\tilde x^\tau(t)-\tilde x^\tau(s)|\leq C|t-s|^{1/2}.
\end{equation}

This also implies 

\begin{equation}\label{H1compEucl3}
|\tilde x^\tau(t)- x^\tau(t)|\leq C\tau^{1/2},
\end{equation}

since $ x^\tau(t)=\tilde x^\tau(s)$ for a certain $s=k\tau$ with $|s-t|\leq \tau$.

This provides the necessary compactness to prove the following.
\begin{proposition}
Let $\tilde x^\tau$, $x^\tau$ and $\vv^\tau$ be constructed as above using the minimizing movement scheme. Suppose $F(x_0)<+\infty$ and $\inf F>-\infty$. Then, up to a subsequence $\tau_j\to 0$ (still denoted by $\tau$), both $\tilde x^\tau$ and $x^\tau$ converge uniformly to a same curve $x\in H^1$, and $\vv^\tau$ weakly converges in $L^2$ to a vector function $\vv$, such that $x'=\vv$ and
\begin{enumerate}
\item if $F$ is $\lambda$-convex, we have  $\vv(t)\in -\partial F(x(t))$ for a.e. $t$, i.e. $x$ is a solution of \eqref{subnabF};
\item if $F$ is $C^1$, we have $\vv(t)= -\nabla F(x(t))$ for all $t$, i.e. $x$ is a solution of \eqref{nabF}.
\end{enumerate}
\end{proposition}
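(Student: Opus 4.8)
The plan is to extract the limit curve by compactness, to identify its derivative as the weak limit of the discrete velocities, and then to pass to the limit in the subdifferential inclusion satisfied by the scheme. First I would establish compactness. The H\"older bound \eqref{H1compEucl2} makes the family $\tilde x^\tau$ uniformly equicontinuous, while Cauchy--Schwarz applied to \eqref{1sttimeH1} gives $\sum_k |x^\tau_{k+1}-x^\tau_k|\leq (2C)^{1/2}T^{1/2}$, so all three curves stay in a fixed bounded set. By Arzel\`a--Ascoli, along a subsequence $\tilde x^\tau$ converges uniformly to some continuous $x$, and by \eqref{H1compEucl3} the piecewise constant interpolation $x^\tau$ converges uniformly to the \emph{same} $x$. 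The bound \eqref{H1compEucl} says $\vv^\tau=(\tilde x^\tau)'$ is bounded in $L^2$, so up to a further subsequence $\vv^\tau\rightharpoonup\vv$ weakly in $L^2$; testing $(\tilde x^\tau)'=\vv^\tau$ against smooth functions and letting $\tau\to 0$ identifies $x'=\vv$, and lower semicontinuity of the $L^2$ norm gives $x\in H^1$.

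Next, in the $\lambda$-convex case, I would pass to the limit in the inclusion $\vv^\tau(t)\in-\partial F(x^\tau(t))$, which holds by construction of the scheme. By the definition of the $\lambda$-subdifferential, for every $y\in\R^n$ and a.e.\ $t$,
$$F(y)\geq F(x^\tau(t))-\vv^\tau(t)\cdot(y-x^\tau(t))+\frac{\lambda}{2}|y-x^\tau(t)|^2.$$
I would fix $y$, multiply by an arbitrary $\phi\in C_c(]0,T[)$ with $\phi\geq 0$, and integrate in $t$, then pass to the limit term by term. The term $\int\phi F(x^\tau)$ is handled by lower semicontinuity of $F$ and Fatou's lemma (using $\phi\geq 0$), giving $\liminf\int\phi F(x^\tau)\geq\int\phi F(x)$, which preserves the right direction of the inequality; the quadratic term converges by uniform convergence of $x^\tau$; and the linear term, rewritten as $\int\phi\,\vv^\tau\cdot y-\int\vv^\tau\cdot(\phi x^\tau)$, converges by pairing the weak $L^2$ convergence of $\vv^\tau$ with the strong $L^2$ convergence of $\phi x^\tau$. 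This yields the same inequality for $(\vv,x)$, integrated against $\phi$; since $\phi\geq 0$ is arbitrary the pointwise inequality holds for a.e.\ $t$ (for the fixed $y$), and taking a countable dense set of values $y$ together with continuity of both sides in $y$ gives, for a.e.\ $t$ and all $y$, exactly $\vv(t)\in-\partial F(x(t))$, i.e.\ $x$ solves \eqref{subnabF}.

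Finally, in the $C^1$ case the optimality condition of the scheme is the exact identity $\vv^\tau(t)=-\nabla F(x^\tau(t))$. Since $x^\tau\to x$ uniformly and $\nabla F$ is continuous, $-\nabla F(x^\tau)\to-\nabla F(x)$ uniformly, hence strongly in $L^2$; by uniqueness of the weak limit this forces $\vv=-\nabla F(x)$. Then $x'=-\nabla F(x)$ has a continuous right-hand side, so $x\in C^1$ and \eqref{nabF} holds for \emph{every} $t$, not merely almost everywhere.

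I expect the main obstacle to be the passage to the limit in the linear term of the subdifferential inequality: it is precisely the product $\vv^\tau\cdot x^\tau$ of two factors, only one of which converges strongly, so the whole argument hinges on combining the weak $L^2$ convergence of the velocities with the strong (uniform) convergence of the positions. A secondary delicate point is reconciling the ``a.e.\ $t$, fixed $y$'' conclusion with the ``a.e.\ $t$, all $y$'' statement needed for the inclusion, which is exactly what the density-and-continuity argument above takes care of.
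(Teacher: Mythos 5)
Your argument is correct and follows essentially the same route as the paper: Ascoli--Arzel\`a from the $H^1$/H\"older bounds, the $C\tau^{1/2}$ estimate to identify the two limits, weak $L^2$ compactness of the velocities, and then passage to the limit in the $\lambda$-subdifferential inequality by integrating against a nonnegative test function and pairing the weak convergence of $\vv^\tau$ with the strong convergence of $x^\tau$, finishing with a countable dense set of base points $y$ (the paper uses an arbitrary positive measurable weight $a$ where you use $\phi\in C_c$, an immaterial difference). Your treatment of the $C^1$ case is also the paper's.
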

\begin{proof} Thanks to the estimates \eqref{H1compEucl} and \eqref{H1compEucl2} and the fact that the initial point $\tilde x^\tau(0)$ is fixed, we can apply Ascoli-Arzel\`a's theorem to $\tilde x^\tau$ and get a uniformly converging subsequence. The estimate\eqref{H1compEucl3} implies that, on the same subsequence, $x^\tau$ also converges uniformly to the same limit, that we will call $x=[0,T]\to\R^n$. Then, $\vv^\tau=(\tilde x^\tau)'$ and \eqref{H1compEucl} allow to guarantee, up to an extra subsequence extraction, the weak convergence $\vv^\tau\deb \vv$ in $L^2$. The condition $x'=\vv$ is automatical as a consequence of distributional convergence.

To prove 1), we will fix a point $y\in\R^n$ and write
$$F(y)\geq F(x^\tau(t))+\vv^\tau(t)\cdot (y-x^\tau(t))+\frac\lambda 2|y-x^\tau(t)|^2.$$
We then multiply by a positive measurable function $a:[0,T]\to \R_+$ and integrate:
$$\int_0^T a(t)\left(F(y)- F(x^\tau(t))-\vv^\tau(t)\cdot (y-x^\tau(t))+\frac\lambda 2|y-x^\tau(t)|^2\right)\dd t\geq 0.$$
We can pass to the limit as $\tau\to 0$, using the uniform (hence $L^2$ strong) convergence $x^\tau\to x$ and the weak convergence $\vv^\tau\deb \vv$. In terms of $F$, we just need its lower semi-continuity. This provides
$$\int_0^T a(t)\left(F(y)- F(x(t))-\vv(t)\cdot (y-x(t))+\frac\lambda 2|y-x(t)|^2\right)\dd t\geq 0.$$
From the arbitrariness of $a$, the inequality 
$$F(y)\geq F(x(t))+\vv(t)\cdot (y-x(t))-\frac\lambda 2|y-x(t)|^2$$
is true for a.e. $t$ (for fixed $y$). Using $y$ in a dense countable set in the interior of $\{F<+\infty\}$ (where $F$ is continuous), we get $\vv(t)\in \partial F(x(t))$.

To prove 2), the situation is easier. Indeed we have
$$-\nabla F(x^\tau(t))=\vv^\tau(t)=(\tilde x^\tau)'(t).$$
The first term in the equality uniformly converges, as a function of $t$, (since $\nabla F$ is continuous and $x^\tau$ lives in a compact set) to $-\nabla F(x)$, the second weakly converges to $\vv$ and the third to $x'$. This proves the claim, and the equality is now true for every $t$ as the function $t\mapsto -\nabla F(x(t))$ is uniformly continuous.
\end{proof}

In the above result, we only proved convergence of the curves $x^\tau$ to the limit curve $x$, solution of $x'=-\nabla F(x)$ (or $-x'\in \partial F(x)$), but we gave no quantitative order of convergence, and we will not study such an issue in the rest of the survey neither. On the contrary, the book \cite{AmbGigSav} which will be the basis for the metric case, also provides explicit estimates; these estimates are usually of order $\tau$. An interesting observation, in the Euclidean case, is that if the sequence $x^\tau_k$ is defined by
$$x^\tau_{k+1} \in \argmin_x \quad 2F\left(\frac{x+x^\tau_k}{2}\right)+\frac{|x-x^\tau_k|^2}{2\tau},$$
then we have
$$\frac{x^\tau_{k+1}-x^\tau_{k}}{\tau}=-\nabla F \left(\frac{x+x^\tau_k}{2}\right),$$
and the convergence is of order $\tau^2$. This has been used in the Wasserstein case\footnote{The attentive reader can observe that, setting $y:=(x+x^\tau_k)/2$, this minimization problem becomes $\min_y 2F(y)+2|y-x^\tau_k|^2/\tau$. Yet, when acting on a metric space, or  simply on a manifold or a bounded domain, there is an extra constraint on $y$: the point $y$ must be the middle point of a geodesic between $x^\tau_k$ and a point $x$ (on a sphere, for instance, this means that if $x^\tau_k$ is the North Pole, then $y$ must lie in the northern emisphere).} (see Section \ref{W2} and in particular Section \ref{W2num}) in \cite{LegTur}.

\subsection{An introduction to the metric setting}

The iterated minimization scheme that we introduced above has another interesting feature: it even suggests how to define solutions for functions $F$ which are only l.s.c., with no gradient at all! 

Even more, a huge advantage of this discretized formulation is also that it can easily be adapted to metric spaces. Actually, if one has a metric space $(X,d)$ and a l.s.c. function $F:X\to\R\cup\{+\infty\}$ (under suitable compactness assumptions to guarantee existence of the minimum), one can define 
\begin{equation}\label{iterated min}
x^\tau_{k+1}\in\argmin_x F(x)+\frac{d(x,x^\tau_k)^2}{2\tau}
\end{equation}
and study the limit as $\tau\to 0$. Then, we use the piecewise constant interpolation
\begin{equation}\label{interp const}
x^\tau(t):=x^\tau_k\quad\mbox{ for every }t\in](k-1)\tau,k\tau]
\end{equation}
and study the limit of $x^\tau$ as $\tau\to 0$.

De Giorgi, in \cite{DeG}, defined what he called {\it Generalized Minimizing Movements}\footnote{We prefer not to make any distinction here between Generalized Minimizing Movements and Minimizing Movements.}:
\begin{definition}
A curve $x:[0,T]\to X$ is called Generalized Minimizing Movements (GMM) if there exists a sequence of time steps $\tau_j\to 0$ such that the sequence of curves $x^{\tau_j}$ defined in \eqref{interp const} using the iterated solutions of \eqref{iterated min} uniformly converges to $x$ in $[0,T]$.
\end{definition}
The compactness results in the space of curves guaranteeing the existence of GMM are also a consequence of an H\"older estimate that we already saw in the Euclidean case. Yet, in the general case, some arrangements are needed, as we cannot use the piecewise affine interpolation. We will see later that, in case the segments may be replaced by geodesics, a similar estimate can be obtained. Yet, we can also obtain a H\"older-like estimate from the piecewise constant interpolation. 

We start from
 \begin{equation}\label{estimation compac}
F(x^\tau_{k+1})+\frac{d(x^\tau_{k+1},x^\tau_k)^2}{2\tau}\leq F(x^\tau_{k}),
\end{equation}
and
$$\sum_{k=0}^l d(x^\tau_{k+1},x^\tau_k)^2\leq 2\tau\left(F(x^\tau_{0})-F(x^\tau_{l+1})\right)\leq C\tau.$$
The Cauchy-Schwartz inequality gives, for $t<s$, $t\in[k\tau,(k+1)\tau[$ and $s\in[l\tau,(l+1)\tau[$ (which implies $|l-k|\leq \frac{|t-s|}{\tau}+1$),
$$d(x^\tau(t),x^\tau(s))\leq\! \sum_{k=0}^l d(x^\tau_{k+1},x^\tau_k)\leq\left(\sum_{k=0}^l d(x^\tau_{k+1},x^\tau_k)^2\!\right)^{\!1/2}\!\!\left( \frac{|t-s|}{\tau}\!+\!1\!\right)^{\!1/2}\!\leq C\left(|t-s|^{1/2}\!\!+\!\sqrt{\tau}\right).$$
This means that the curves $x^\tau$ - if we forget that they are discontinuous - are morally equi-h\"older continuous with exponent $1/2$ (up to a negligible error of order $\sqrt\tau$), and allows to extract a converging subsequence.

Anyway, if we add some structure to the metric space $(X,d)$, a more similar analysis to the Euclidean case can be performed. This is what happens when we suppose that $(X,d)$ is a {\it geodesic space}. This requires a short discussion about curves and geodesics in metric spaces.\medskip

{\bf Curves and geodesics in metric spaces.}
We recall that a curve $\omega$ is a continuous function defined on a interval, say $[0,1]$ and valued in a metric space $(X,d)$. As it is a map between metric spaces, it is meaningful to say whether it is Lipschitz or not, but its speed $\omega'(t)$ has no meaning, unless $X$ is a vector space. Surprisingly, it is possible to give a meaning to the modulus of the velocity, $|\omega'|(t)$.
\begin{definition} If $\omega:[0,1]\to X$ is a curve valued in the metric space $(X,d)$ we define the metric derivative of $\omega$ at time $t$, denoted by $|\omega'|(t)$ through
$$ |\omega'|(t):=\lim_{h\to 0}\frac{d(\omega(t+h),\omega(t))}{|h|},$$
provided this limit exists.
\end{definition}

In the spirit of Rademacher Theorem, it is possible to prove (see \cite{AmbTil}) that, if $\omega:[0,1]\to X$ is Lipschitz continuous, then the metric derivative $|\omega'|(t)$ exists for a.e. $t$. Moreover we have, for $t_0<t_1$,
$$d(\omega(t_0),\omega(t_1))\leq \int_{t_0}^{t_1} |\omega'|(s)\,\dd s.$$
The same is also true for more general curves, not only Lipschitz continuous. 

\begin{definition} A curve $\omega:[0,1]\to X$ is said to be {\it absolutely continuous} whenever there exists $g\in L^1([0,1])$ such that $d(\omega(t_0),\omega(t_1))\leq \int_{t_0}^{t_1}g(s)\dd s$ for every $t_0<t_1$.  The set of absolutely continuous curves defined on $[0,1]$ and valued in $X$ is denoted by $\mathrm{AC}(X)$.
\end{definition}

It is well-known that every absolutely continuous curve can be reparametrized in time (through a monotone-increasing reparametrization) and become Lipschitz continuous, and 
%
the existence of the metric derivative for a.e. $t$ is also true for $\omega\in\mathrm{AC}(X)$, via this reparametrization.

Given a continuous curve, we can also define its length, and the notion of geodesic curves.

\begin{definition} 
For a curve $\omega:[0,1]\to X$, let us define
$$\len(\omega):=\sup\left\{\sum_{k=0}^{n-1}d(\omega(t_k),\omega(t_{k+1}))\,:\,n\geq 1,\, 0=t_0<t_1<\dots<t_n=1\right\}.$$
\end{definition}
It is easy to see that all curves $\omega\in \mathrm{AC}(X)$ satisfy $\len(\omega)\leq\int_0^1 g(t)\dd t<+\infty$.
Also, we can prove that, for any curve $\omega\in\mathrm{AC}( X)$, we have
$$\len(\omega)=\int_0^1 |\omega'|(t)\dd t.$$

We collect now some more definitions.

\begin{definition} A curve $\omega:[0,1]\to X$ is said to be a geodesic between $x_0$ and $x_1\in X$ if $\omega(0)=x_0$, $\omega(1)=x_1$ and $\len(\omega)=\min\{\len(\tilde\omega)\,:\,\tilde\omega(0)=x_0,\,\tilde\omega(1)=x_1\}$. 

A space $(X,d)$ is said to be a {\it length space} if for every $x$ and $y$ we have
$$d(x,y)=\inf\{\len(\omega)\,:\,\omega\in\mathrm{AC}(X),\,\omega(0)=x,\,\omega(1)=y\}.$$

A space $(X,d)$ is said to be a {\it geodesic space} if for every $x$ and $y$ we have
$$d(x,y)=\min\{\len(\omega)\,:\,\omega\in\mathrm{AC}(X),\,\omega(0)=x,\,\omega(1)=y\},$$
i.e. if it is a length space and there exist geodesics between arbitrary points.
\end{definition}
 In a length space, a curve $\omega:[t_0,t_1]\to X$ is said to be a {\it constant-speed geodesic} between $\omega(0)$ and $\omega(1)\in X$ if it satisfies
$$d(\omega(t),\omega(s))=\frac{|t-s|}{t_1-t_0}d(\omega(t_0),\omega(t_1))\quad\mbox{ for all }t,s\in [t_0,t_1].$$
It is easy to check that a curve with this property is automatically a geodesic, and that the following three facts are equivalent (for arbitrary $ p>1$)
\begin{enumerate}
\item $\omega$ is a constant-speed geodesic defined on $ [t_0,t_1]$ and joining $x_0$ and $x_1$,
\item $\omega\in\mathrm{AC}(X)$ and $|\omega'|(t)=\frac{d(\omega(t_0),\omega(t_1))}{t_1-t_0}$ a.e., 
\item $\omega$ solves $\min\left\{\int_{t_0}^{t_1} |\omega'|(t)^p\dd t\,:\,\omega(t_0)=x_0,\omega(t_1)=x_1\right\}$. 
\end{enumerate}
\bigskip

We can now come back to the interpolation of the points obained through the Minimizing Movement scheme \eqref{iterated min} and note that, if $(X,d)$ is a geodesic space, then the piecewise affine interpolation that we used in the Euclidean space may be helpfully replaced via a piecewise geodesic interpolation. This means defining a curve $x^\tau:[0,T]\to X$ such that $x^\tau(k\tau)=x^\tau_k$ and such that $x^\tau$ restricted to any interval $[k\tau,(k+1)\tau]$ is a constant-speed geodesic with speed equal to $d(x^\tau_k,x^\tau_{k+1})/\tau$. Then, the same computations as in the Euclidean case allow to prove an $H^1$ bound on the curves $x^\tau$ (i.e. an $L^2$ bound on the metric derivatives $|(x^\tau)'|$) and prove equicontinuity.

The next question is how to characterize the limit curve obtained when $\tau\to 0$, and in particular how to express the fact that it is a gradient flow of the function $F$. Of course, one cannot try to prove the equality $x'=-\nabla F(x)$, just because neither the left-hand side nor the right-hand side have a meaning in a metric space!

If the space $X$, the distance $d$, and the functional $F$ are explicitly known, in some cases it is possible to pass to the limit the optimality conditions of each optimization problem in the discretized scheme, and characterize the limit curves (or the limit curve) $x(t)$. It will be possible to do so in the framework of probability measures, as it will be discussed in Section \ref{W2}, but not in general. Indeed, without a little bit of (differential) structure on the space $X$, it is essentially impossible to do so. Hence, if we want to develop a general theory for gradient flows in metric spaces, finer tools are needed. In particular, we need to characterize the solutions of $x'=-\nabla F(x)$ (or $x'\in -\partial F(x)$) by only using metric quantities (in particular, avoiding derivatives, gradients, and more generally vectors). The book by Ambrosio-Gigli-Savar\'e \cite{AmbGigSav}, and in particular its first part (the second being devoted to the space of probability measures) exactly aims at doing so. 

Hence, what we do here is to present alternative characterizations of gradient flows in the smooth Euclidean case, which can be used as a definition of gradient flow in the metric case, since all the quantities which are involved have their metric counterpart. 

The first observation is the following: thanks to the Cauchy-Schwartz inequality, for every curve we have
\begin{eqnarray*}
F(x(s))-F(x(t))=\int_s^t -\nabla F(x(r))\cdot x'(r)\;\dd r&\leq& \int_s^t |\nabla F(x(r))|| x'(r)|\;\dd r\\
 &\leq& \int_s^t  \left(\frac12 |x'(r)|^2 + \frac 12 |\nabla F(x(r))|^2\right)\dd r.
 \end{eqnarray*}
Here, the first inequality is an equality if and only if $x'(r)$ and $\nabla F(x(r))$ are vectors with opposite directions for a.e. $r$, and the second is an equality if and only if their norms are equal. Hence, the condition, called EDE ({\it Energy Dissipation Equality})\index{EDE condition}
$$F(x(s))-F(x(t))=\int_s^t  \left(\frac12 |x'(r)|^2 + \frac 12 |\nabla F(x(r))|^2\right)\dd r,\quad\mbox{ for all } s<t$$
(or even the simple inequality $F(x(s))-F(x(t))\geq \int_s^t  \left(\frac12 |x'(r)|^2 + \frac 12 |\nabla F(x(r))|^2\right)\dd r$) is equivalent to $x'=-\nabla F(x)$ a.e., and could be taken as a definition of gradient flow.

In the general theory of Gradient Flows (\cite{AmbGigSav}) in metric spaces, another characterization, different from the EDE, is proposed in order to cope with uniqueness and stability results. It is based on the following observation: if $F:\R^d\to\R$ is convex, then the inequality
$$F(y)\geq F(x)+p\cdot (y-x)\quad\mbox{ for all }y\in\R^d$$
characterizes (by definition) the vectors $p\in\partial F(x)$ and, if $F\in C^1$, it is only satisfied for $p=\nabla F(x)$. Analogously, if $F$ is $\lambda$-convex, the inequality that characterizes the gradient is
$$F(y)\geq F(x)+\frac{\lambda}{2}|x-y|^2+p\cdot (y-x)\quad\mbox{ for all }y\in\R^d.$$
We can pick a curve $x(t)$ and a point $y$ and compute
$$\frac{d}{dt}\frac 12|x(t)-y|^2=(y-x(t))\cdot (-x'(t)).$$
Consequently, imposing
$$\frac{d}{dt}\frac 12|x(t)-y|^2\leq F(y)-F(x(t))-\frac{\lambda}{2}|x(t)-y|^2,$$
for all $y$, will be equivalent to $-x'(t)\in -\partial F(x(t))$. This will provide a second characterization (called EVI, {\it Evolution Variational Inequality}) of gradient flows in a metric environment. Indeed, all the terms appearing in the above inequality have a metric counterpart (only squared distances and derivatives w.r.t. time appear). Even if we often forget the dependance on $\lambda$, it should be noted that the condition EVI should actually be written as EVI$_\lambda$, since it involves a parameter $\lambda$, which is a priori arbitrary. Actually, 
$\lambda$-convexity of $F$ is not necessary to define the EVI$_\lambda$ property, but it will be necessary in order to guarantee the existence of curves which satisfy such a condition. The notion of $\lambda$-convexity\index{$\lambda$-convexity} will hence be crucial also in metric spaces, where it will be rather ``$\lambda$-geodesic-convexity''.

The role of the EVI condition in the uniqueness and stability of gradient flows is quite easy to guess. Take two curves, that we call $x(t)$ and $y(s)$, and compute 
\begin{gather}\frac{d}{dt}\frac 12d(x(t),y(s))^2\leq F(y(s))-F(x(t))-\frac{\lambda}{2}d(x(t),y(s))^2,\\
			\frac{d}{ds}\frac 12d(x(t),y(s))^2\leq F(x(t))-F(y(s))-\frac{\lambda}{2}d(x(t),y(s))^2.\end{gather}
If one wants to estimate $E(t)=\frac 12d(x(t),y(t))^2$, summing up the two above inequalities, after a chain-rule argument for the composition of the function of two variables $(t,s)\mapsto \frac 12d(x(t),y(s))^2$ and of the curve $t\mapsto (t,t)$, gives
$$\frac{d}{dt} E(t)\leq -2\lambda E(t).$$
By Gronwall Lemma, this provides uniqueness (when $x(0)=y(0)$) and stability.

\section{The general theory in metric spaces}\label{metricth}

\subsection{Preliminaries}

In order to sketch the general theory in metric spaces, first we need to give (or recall) general definitions for the three main objects that we need in the  EDE and EVI properties, characterizing gradient flows:  the notion of speed of a curve, that of slope of a function (somehow the modulus of its gradient) and that of (geodesic) convexity. 

{\bf Metric derivative.} We already introduced in the previous section the notion of metric derivative: given a curve $x:[0,T]\to X$ valued in a metric space, we can define, instead of the velocity $x'(t)$ as a vector (i.e, with its direction, as we would do in a vector space), the speed (i.e. the modulus, or norm, of $x'(t)$) as follows:
$$|x'|(t):=\lim_{h\to 0}\frac{d(x(t),x(t+h))}{|h|},$$
provided the limit exists.This is the notion  of speed that we will use in metric spaces.

{\bf Slope and modulus of the gradient.} Many definitions of the modulus of the gradient of a function $F$ defined over a metric space are possible. First, we call {\it upper gradient} every function $g:X\to \R$ such that, for every Lipschitz curve $x$, we have
$$|F(x(0))-F(x(1))|\leq \int_0^1 g(x(t))|x'|(t)\dd t.$$
If $F$ is Lipschitz continuous, a possible choice is the {\it local Lipschitz constant}
\begin{equation}\label{const lip loc}
|\nabla F|(x):=\limsup_{y\to x}\frac{|F(x)-F(y)|}{d(x,y)};
\end{equation}
another is the {\it descending slope} (we will often say just {\it slope}), which is a notion more adapted to the minimization of a function than to its maximization, and hence reasonable for lower semi-continuous functions:
$$|\nabla^- F|(x):=\limsup_{y\to x}\frac{[F(x)-F(y)]_+}{d(x,y)}$$
(note that the slope vanishes at every local minimum point). In general, it is not true that the slope is an upper gradient, but we will give conditions to guarantee that it is. Later on (Section \ref{heat}) we will see how to define a Sobolev space $H^1$ on a (measure) metric space, by using suitable relaxations of the modulus of the gradient of $F$.

{\bf Geodesic convexity.} The third notion to be dealt with is that of convexity. This can only be done in a geodesic metric space. On such a space, we can say that a function is {\it geodesically convex} whenever it is convex along geodesics. More precisely, we require that for every pair $(x(0),x(1))$ there exists\footnote{Warning: this definition is not equivalent to true convexity along the geodesic, since we only compare intermediate instants $t$ to $0$ and $1$, and not to other interemediate instants; however, in case of uniqueness of goedesics, or if we required the same condition to be true for all geodesics, then we would recover the same condition. Also, let us note that we will only need the existence of geodesics connecting pairs of points where $F<+\infty$.} a geodesic $x$ with constant speed connecting these two points and such that 
$$F(x(t))\leq (1-t)F(x(0))+tF(x(1)).$$
We can also define $\lambda$-convex functions as those which satisfy a modified version of the above inequality: 
\begin{equation}\label{defilambda}
F(x(t))\leq (1-t)F(x(0))+tF(x(1))-\lambda\frac{t(1-t)}{2}d^2(x(0),x(1)).
\end{equation}

\subsection{Existence of a gradient flow}\label{3.2}
Once fixed these basic ingredients, we can now move on to the notion of gradient flow.
A starting approach is, again, the sequential minimization along a discrete scheme, for a fixed time step $\tau>0$, and then pass to the limit. First, we would like to see in which framework this procedure is well-posed. Let us suppose that the space $X$ and the function $F$ are such that every sub-level set $\{ F\leq c\}$ is compact in $X$, either for the topology induced by the distance $d$, or for a weaker topology, such that $d$ is lower semi-continuous w.r.t. it;  $F$ is required to be l.s.c. in the same topology. This is the minimal framework to guarantee existence of the minimizers at each step, and to get estimates as in \eqref{estimation compac} providing the existence of a limit curve. It is by the way a quite general situation, as we can see in the case where $X$ is a reflexive Banach space and the distance $d$ is the one induced by the norm: in this case there is no need to restrict to the (very severe) assumption that $F$ is strongly l.s.c., but the weak topology allows to deal with a much wider situation. 

We can easily understand that, even if the estimate \eqref{estimation compac} is enough to provide compactness, and thus the existence of a GMM, it will never be enough to characterize the limit curve (indeed, it is satisfied by any discrete evolution where $x^\tau_{k+1}$ gives a better value than  $x^\tau_k$, without any need for optimality). Hence, we will never obtain either of the two formulations - EDE or EVI - of metric gradient flows.

In order to improve the result, we should exploit how much $x^\tau_{k+1}$ is better than $x^\tau_k$. An idea due to De Giorgi allows to obtain the desired result, via a ``variational interpolation'' between the points $x^\tau_k$ and $x^\tau_{k+1}$. In order to do so, once we fix $x^\tau_k$, for every $\theta\in]0,1]$, we consider the problem
$$\min_x\quad F(x)+\frac{d^2(x,x^\tau_k)}{2\theta\tau}$$
and call $x(\theta)$ any minimizer for this problem, and $\varphi(\theta)$ the minimal value. It's clear that, for $\theta\to 0^+$, we have $x(\theta)\to x^\tau_k$ and $\varphi(\theta)\to F(x^\tau_k)$, and that, for $\theta=1$, we get back to the original problem with minimizer $x^\tau_{k+1}$. Moreover, the function $\varphi$ is non-increasing and hence a.e. differentiable (actually, we can even prove that it is locally semiconcave). Its derivative $\varphi'(\theta)$ is given by the derivative of the function $\theta\mapsto F(x)+\frac{d^2(x,x^\tau_k)}{2\theta\tau}$, computed at the optimal point $x=x(\theta)$ (the existence of $\varphi'(\theta)$ implies that this derivative is the same at every minimal point $x(\theta)$). Hence we have
$$\varphi'(\theta)=-\frac{d^2(x(\theta),x^\tau_k)}{2\theta^2\tau},$$
which means, by the way, that $d(x(\theta),x^\tau_k)^2$ does not depend on the minimizer $x(\theta)$ for all $\theta$ such that $\varphi'(\theta)$ exists.
Moreover, the optimality condition for the minimization problem with $\theta>0$ easily show that 
$$|\nabla^- F|(x(\theta))\leq \frac{d(x(\theta),x^\tau_k)}{\theta\tau}.$$ 
This can be seen if we consider the minimization of an arbitrary function $x\mapsto F(x)+cd^2(x,\bar x)$, for fixed $c>0$ and $\bar x$, and we consider a competitor $y$. If $x$ is optimal we have $F(y)+cd^2(y,\bar x)\geq F(x)+cd^2(x,\bar x)$, which implies
$$F(x)-F(y)\leq c\left(d^2(y,\bar x)-d^2(x,\bar x)\right)=c\left(d(y,\bar x)+d(x,\bar x)\right)\left(d(y,\bar x)-d(x,\bar x)\right)
\leq c\left(d(y,\bar x)+d(x,\bar x)\right)d(y,x).
$$
We divide by $d(y,x)$, take the positive part and then the $\limsup$ as $y\to x$, and we get $|\nabla^- F|(x)\leq 2cd(x,\bar x)$.

We now come back to the function $\varphi$ and use 
$$\varphi(0)-\varphi(1)\geq -\int_0^1 \varphi'(\theta)\,\dd\theta$$ 
(the inequality is due to the possible singular part of the derivative for monotone functions; actually, we can prove that it is an equality by using the local semiconcave behavior, but this is not needed in the following), together with the inequality
$$-\varphi'(\theta)=\frac{d(x(\theta),x^\tau_k)^2}{2\theta^2\tau}\geq \frac{\tau}{2}|\nabla^- F(x(\theta))|^2$$
that we just proved. Hence, we get an improved version of \eqref{estimation compac}:
$$
F(x^\tau_{k+1})+\frac{d(x^\tau_{k+1},x^\tau_k)^2}{2\tau}\leq F(x^\tau_{k})-\frac \tau 2\int_0^1|\nabla^- F(x(\theta))|^2\dd\theta.$$
If we sum up for $k=0,1,2,\dots$ and then take the limit $\tau\to 0$, we can prove, for every GMM $x$, the inequality
\begin{equation}\label{EDE ottentua}
F(x(t))+\frac 12\int_0^t |x'|(r)^2\dd r+\frac 12\int_0^t |\nabla^- F(x(r))|^2\dd r\leq F(x(0)),
\end{equation}
under some suitable assumptions that we must select. In particular, we need lower-semicontinuity of $F$ in order to handle the term $F(x^\tau_{k+1})$ (which will become $F(x(t))$ at the limit), but we also need  lower-semicontinuity of the slope $|\nabla^- F|$  in order to handle the corresponding term.

This inequality does not exactly correspond to EDE: on the one hand we have an inequality, and on the other we just compare instants $t$ and $0$ instead of $t$ and $s$. If we want equality for every pair $(t,s)$, we need to require the slope to be an upper gradient. Indeed, in this case, we have the inequality $F(x(0))-F(x(t))\leq \int_0^t |\nabla^- F(x(r))||x'|(r)\dd r$ and, starting from the usual inequalities, we find that \eqref{EDE ottentua} is actually an equality. This allows to subtract the equalities for $s$ and $t$, and get, for $s<t$:
$$F(x(t))+\frac 12\int_s^t |x'|(r)^2\dd r+\frac 12\int_s^t |\nabla^- F(x(r))|^2\dd r= F(x(s)).$$

Magically, it happens that the assumption that $F$ is $\lambda$-geodesically convex simplifies everything. Indeed, we have two good points: the slope is automatically l.s.c., and it is automatically an upper gradient. These results are proven in \cite{AmbGigSav,usersguide}. We just give here the main idea to prove both. This idea is based on a pointwise representation of the slope as a $\sup$ instead of a $\limsup$: if $F$ is $\lambda$-geodesically convex, then we have
\begin{equation}\label{point sup lamcon}
|\nabla^- F|(x)=\sup_{y\neq x}\left[\frac{F(x)-F(y)}{d(x,y)}+\frac \lambda 2 d(x,y)\right]_+.
\end{equation}
In order to check this, we just need to add a term $\frac \lambda 2 d(x,y)$ inside the positive part of the definition of $|\nabla^- F|(x)$, which does not affect the limit as $y\to x$ and shows that $|\nabla^- F|(x)$ is smaller than this $\sup$. The opposite inequality is proven by fixing a point $y$, connecting it to $x$ through a geodesic $x(t)$, and computing the limit along this curve.

This representation as a $\sup$ allows to prove semicontinuity of the slope \footnote{Warning: we get here semi-continuity w.r.t. the topology induced by the distance $d$, which only allows to handle the case where the set $\{F\leq c\}$ are $d$-compacts.}. It is also possible (see \cite{usersguide}, for instance) to prove that the slope is an upper gradient.

Let us insist anyway on the fact that the $\lambda$-convexity assumption is not natural nor crucial to prove the existence of a gradient flow. On the one hand, functions smooth enough could satisfy the assumptions on the semi-continuity of $F$ and of $|\nabla^-F|$ and the fact that $|\nabla^-F|$ is an upper gradient independently of convexity; on the other hand the discrete scheme already provides a method, well-posed under much weaker assumptions, to find a limit curve. If the space and the functional allow for it (as it will be the case in the next section), we can hope to characterize this limit curve as the solution of an equation (it will be a PDE in Section \ref{W2}), without passing through the general theory and the EDE condition.

\subsection{Uniqueness and contractivity}
On the contrary, if we think at the uniqueness proof that we gave in the Euclidean case, it seems that some sort of convexity should be the good assumption in order to prove uniqueness. Here we will only give the main lines of the uniqueness theory in the metric framework: the key point is to use the EVI condition instead of the EDE. 

The situation concerning these two different notions of gradient flows (EVI and EDE) in abstract metric spaces has been clarified by Savar\'e (in an unpublished note, but the proof can also be found in \cite{usersguide}), who showed that
\begin{itemize}
\item All curves which are gradient flows in the EVI sense also satisfy the EDE condition.
\item The EDE condition is not in general enough to guarantee uniqueness of the gradient flow. A simple example: take $X=\R^2$ with the $\ell^\infty$ distance 
$$d((x_1,x_2),(y_1,y_2))=|x_1-y_1|\vee|x_2-y_2|,$$ 
and take $F(x_1,x_2)=x_1$; we can check that any curve $(x_1(t),x_2(t))$ with $x_1'(t)=-1$ and $|x_2'(t)|\leq 1$ satisfies EDE. 
\item On the other hand, existence of a gradient flow in the EDE sense is quite easy to get, and provable under very mild assumption, as we sketched in Section \ref{3.2}.
\item The EVI condition is in general too strong in order to get existence (in the example above of the $\ell^\infty$ norm, no EVI gradient flow would exist), but always guarantees uniqueness and stability (w.r.t. initial data).
\end{itemize}

Also, the existence of EVI gradient flows is itself very restricting on the function $F$: indeed, it is proven in \cite{DanSav} that, if $F$ is such that from every starting point $x_0$ there exists an EVI$_\lambda$ gradient flow, then $F$ is necessarily $\lambda$-geodesically-convex.

We provide here an idea of the proof of the contractivity (and hence of the uniqueness) of the EVI gradient flows.

\begin{proposition}\label{uniqEVI}
 If two curves $x,y:[0,T]\to X$ satisfy the EVI condition, then we have
 $$\frac{d}{dt}d(x(t),y(t))^2\leq -2\lambda d(x(t),y(t))^2$$
 and $d(x(t),y(t))\leq e^{-\lambda t}d(x(0),y(0))$.
 \end{proposition}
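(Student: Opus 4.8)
The plan is to exploit the EVI$_\lambda$ inequality satisfied by each of the two curves, using the \emph{other} curve's current position as the frozen test point, and then to combine the resulting two inequalities along the diagonal $s=t$. This is precisely the heuristic sketched just before the statement, and the whole point is an algebraic cancellation of the functional values.

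First I would fix an instant $s$ and treat $y(s)$ as a fixed competitor against which the curve $x(\cdot)$ is tested. The EVI condition for $x$ gives, for a.e.\ $t$,
$$\frac{d}{dt}\tfrac12 d(x(t),y(s))^2 \leq F(y(s))-F(x(t))-\tfrac{\lambda}{2}d(x(t),y(s))^2.$$
Symmetrically, freezing $x(t)$ and testing it against the curve $y(\cdot)$ yields, for a.e.\ $s$,
$$\frac{d}{ds}\tfrac12 d(x(t),y(s))^2 \leq F(x(t))-F(y(s))-\tfrac{\lambda}{2}d(x(t),y(s))^2,$$
where I have used the symmetry of $d$. The key observation is that the terms $F(y(s))-F(x(t))$ and $F(x(t))-F(y(s))$ are exactly opposite, so that upon summing the two right-hand sides the functional contributions disappear and only the $-\lambda d^2$ terms remain.

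Next I would set $E(t):=\tfrac12 d(x(t),y(t))^2$ and differentiate it via the chain rule applied to the two-variable map $\Phi(t,s):=\tfrac12 d(x(t),y(s))^2$ composed with the diagonal $t\mapsto(t,t)$, so that $\frac{d}{dt}E(t)=\partial_t\Phi(t,t)+\partial_s\Phi(t,t)$. Evaluating the two inequalities above at $s=t$ and adding them gives
$$\frac{d}{dt}E(t)\leq -2\lambda E(t)\qquad\text{for a.e.\ }t,$$
which is the first asserted inequality. Gronwall's lemma then yields $E(t)\leq e^{-2\lambda t}E(0)$, that is $d(x(t),y(t))\leq e^{-\lambda t}d(x(0),y(0))$; taking $x(0)=y(0)$ forces $x\equiv y$ and proves uniqueness.

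The step I expect to be the main obstacle is the chain-rule identity $\frac{d}{dt}E=\partial_t\Phi+\partial_s\Phi$, since in a general metric space $d(x(t),y(s))^2$ need not be jointly differentiable and the two EVI inequalities hold only almost everywhere. The clean way around this is to phrase everything in terms of upper (Dini) derivatives, for which the additivity along the diagonal holds in the direction we need, and to use the fact that EVI curves are locally Lipschitz in time (so that $E$ is absolutely continuous) in order to integrate the a.e.\ differential inequality. Once $E$ is known to be absolutely continuous and to satisfy $E'\leq -2\lambda E$ a.e., the Gronwall estimate follows with no further subtlety.
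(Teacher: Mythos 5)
Your proposal follows exactly the argument the paper gives: test each EVI curve against the other's frozen position, sum the two inequalities so the $F$-terms cancel, combine via the chain rule along the diagonal, and conclude by Gronwall. Your closing remark about the chain-rule step being the delicate point is also consistent with the paper, which explicitly presents that differentiation as formal.
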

The second part of the statement is an easy consequence of the first one, by Gronwall Lemma. The first is (formally) obtained by differentiating $t\mapsto d(x(t),y(t_0))^2$ at $t=t_0$, then $s\mapsto d(x(t_0),y(s))^2$ at $s=t_0$.The EVI condition allows to write
\begin{gather*}
\frac{d}{dt}d(x(t),y(t_0))^2_{|t=t_0}\leq-\lambda  d(x(t_0),y(t_0))^2+2F(y(t_0))-2F(x(t_0))\\
\frac{d}{ds}d(x(t_0),y(s))^2_{|s=t_0}\leq - \lambda  d(x(t_0),y(t_0))^2+2F(x(t_0))-2F(y(t_0))
\end{gather*}
and hence, summing up, and playing with the chain rule for derivatives, we get
$$\frac{d}{dt}d(x(t),y(t))^2\leq -2\lambda d(x(t),y(t))^2.$$
 
If we want a satisfying theory for gradient flows which includes uniqueness, we just need to prove the existence of curves which satisfy the EVI condition, accepting that this will probably require additional assumptions. This can still be done via the discrete scheme, adding a compatibility hypothesis between the function $F$ and the distance $d$, a condition which involves some sort of convexity. We do not enter the details of the proof, for which we refer to \cite{AmbGigSav}, where the convergence to an EVI gradient flow is proven, with explicit error estimates. These a priori estimates allow to prove that we have a Cauchy sequence, and then allow to get rid of the compactness part of the proof (by the way, we could even avoid using compactness so as to prove existence of a minimizer at every time step, using almost-minimizers and the in the Ekeland's variational principle \cite{Eke-VarPrin}). Here, we will just present this extra convexity assumption, needed for the existence of EVI gradient flows developed in \cite{AmbGigSav}.

This assumption, that we will call C$^2$G$^2$ ({\it Compatible Convexity along Generalized Geodesics}) is the following: suppose that, for every pair $(x_0,x_1)$ and every $y\in X$, there is a curve $x(t)$ connecting $x(0)=x_0$ to $x(1)=x_1$, such that
 \begin{gather*}
F(x(t))\leq (1-t)F(x_0)+tF(x_1)-\lambda\frac{t(1-t)}{2}d^2(x_0,x_1),\\
d^2(x(t),y)\leq (1-t)d^2(x_0,y)+td^2(x_1,y)-t(1-t)d^2(x_0,x_1).
\end{gather*}

In other words, we require $\lambda$-convexity of the function $F$, but also the $2$-convexity of the function $x\mapsto d^2(x,y)$, along a same curve which is not necessarily the geodesic. This second condition is automatically satisfied, by using the geodesic itself, in the Euclidean space (and in every Hilbert space), since the function $x\mapsto |x-y|^2$ is quadratic, and its Hessian matrix is $2I$ at every point. We can also see that it is satisfied in a normed space if and only if the norm is induced by a scalar product. It has been recently pointed out by Gigli that the sharp condition on the space $X$ in order to guarantee existence of EVI gradient flows is that $X$ should be {\it infinitesimally Hilbertian} (this will be made precise in Section \ref{heat}).

Here, we just observe that C$^2$G$^2$ implies $(\lambda+\frac 1\tau)$-convexity, along those curves, sometimes called {\it generalized geodesics} (consider that these curves also depend on a third point, sort of a base point, typically different from the two points that should be connected), of the functional that we minimize at each time step in the minimizing movement scheme. This provides uniqueness of the minimizer as soon as $\tau$ is small enough, and allows to perform the desired estimates. 

Also, the choice of this C$^2$G$^2$ condition, which is a technical condition whose role is only to prove existence of an EVI gradient flow, has been done in view of the applications to the case of the Wasserstein spaces, that wil be the object of the next section. Indeed, in these spaces the squared distance is not in general $2$-convex along geodesics, but we can find some adapted curves on which it is $2$-convex, and many functionals $F$ stay convex on these curves.

We finish this section by mentioning a recent extension to some non-$\lambda$-convex functionals. The starting point is the fact that the very use of Gronwall's lemma to prove uniqueness can be modified by allowing for a weaker condition. Indeed, it is well-known that, whenever a function $\omega$ satisfies an Osgood condition $\int_0^1\frac{1}{\omega(s)}\dd s=+\infty$, then $E'\leq \omega(E)$ together with $E(0)=0$ implies $E(t)=0$ for $t>0$. This suggests that one could define a variant of the EVI definition for functions which are not $\lambda-$convex, but almost, and this is the theory developed in \cite{KatyCraig}. Such a paper studies the case where $F$ satisfies some sort of $\omega$-convexity for a ``modulus of convexity'' $\omega$. More precisely, this means
$$F(x_t)\leq (1-t)F(_0)+tF(x_1)-\frac{|\lambda|}{ 2} [(1-t)\omega(t^2d(x_0,x_1)^2)+t\omega((1-t)^2d(x_0,x_1)^2)],$$
on generalized geodesics $x_t$ (note that in the case $\omega(s)=s$ we come back to \eqref{defilambda}). The function $\omega$ is required to satisfy an Osgood condition (and some other technical conditions). Then, the EVI condition is replaced by
$$\frac{d}{dt}\frac 12d(x(t),y)^2\leq F(y)-F(x(t))+\frac{|\lambda|}{2}\omega(d(x(t),y)^2),$$
and this allows to produce a theory with existence and uniqueness results (via a variant of Proposition \ref{uniqEVI}). In the Wasserstein spaces (see next section), a typical case of functionals which can fit this theory are functionals involving singular interaction kernels (or solutions to elliptic PDEs, as in the Keller-Segel case) under $L^\infty$ constraints on  the density (using the fact that the gradient $\nabla u$ of the solution of $-\Delta u=\varrho$ is not Lipschitz when $\varrho\in L^\infty$, but is at least log-lipschitz).

\section{Gradient flows in the Wasserstein space}\label{W2}

One of the most exciting applications (and maybe the only one\footnote{This is for sure exaggerated, as we could think for instance at the theory of geometrical evolutions of shapes and sets, even if it seems that this metric approach has not yet been generalized in this framework.}, in what concerns applied mathematics) of the theory of gradient flows in metric spaces is for sure that of evolution PDEs in the space of measures. This topic is inspired from the work by Jordan, Kinderlehrer and Otto (\cite{JKO}), who had the intuition that the Heat and the Fokker-Planck equations have a common variational structure in terms of a particular distance on the probability measures, the so-called Wasserstein distance. Yet, the theory has only become formal and general with the work by Ambrosio, Gigli and Savar\'e (which does not mean that proofs in \cite{JKO} were not rigorous, but the intutition on the general structure still needed to be better understood). 

The main idea is to endow the space $\pical(\Omega)$ of probability measures on a domain $\Omega\subset\R^d$ with a distance, and then deal with gradient flows of suitable functionals on such a metric space.
Such a distance arises from optimal transport theory. More details about optimal transport can be found in the books by C. Villani (\cite{villani, villani06}) and in the book on gradient flows by Ambrosio, Gigli and Savar\'e \cite{AmbGigSav}\footnote{Lighter versions exist, such as \cite{AmbSav}, or the recent {\it User's Guide to Optimal Transport} (\cite{usersguide}), which is a good reference for many topics in this survey, as it deals for one half with optimal transport (even if the title suggests that this is the only topic of the guide), then for one sixth with the general theory of gradient flows (as in our Section \ref{metricth}), and finally for one third with metric spaces with curvature bounds (that we will briefly sketch in Section \ref{heat}).}; a recent book by the author of this survey is also available \cite{OTAM}.

\subsection{Preliminaries on Optimal Transport}

The motivation for the whole subject is the following problem proposed by Monge in 1781 (\cite{Monge}): given two densities of mass $f,\,g\geq 0$ on $\R^d$, with $\int f=\int g =1$, find a map $T:\R^d\to\R^d$ pushing the first one onto the other, i.e.
such that
\begin{equation}\label{push-forward}
\int_{A}g(x)\dd x=\int_{T^{-1}(A)}f(y)\dd y\quad \mbox{ for any Borel subset }A\subset\R^d
\end{equation}
and minimizing the quantity
$$\int_{\R^d}|T(x)-x|f(x)\,\dd x$$
among all the maps satisfying this condition. This means that we have a collection of particles, distributed with density $f$ on $\R^d$, that have to be moved, so that they arrange according to a new distribution, whose density is prescribed and is $g$. The movement has to be chosen so as to minimize the average displacement. The map $T$ describes the movement, and $T(x)$ represents the destination of the particle originally located at $x$. The constraint on $T$ precisely accounts for the fact that we need to reconstruct the density $g$. In  the sequel, we will always define, similarly to \eqref{push-forward}, the image measure of a measure $\mu$ on $X$ (measures will indeed replace the densities $f$ and $g$ in the most general formulation of the problem) through a measurable map $T:X\to Y$: it is the measure denoted by $T_\#\mu$ on $Y$ and characterized by
\begin{gather*}
(T_\#\mu)(A)=\mu(T^{-1}(A)) \quad\mbox{for every measurable set } A,\\
\mbox{ or }\int_Y\phi \,\dd\left(T_\#\mu\right)=\int_X \phi\circ T\,\,\dd \mu\quad\mbox{for every measurable function } \phi.
\end{gather*}
The problem of Monge has stayed with no solution (does a minimizer exist? how to characterize it?\dots)  till the progress made in the 1940s with the work by Kantorovich (\cite{Kantorovich}). In the Kantorovich's framework, the problem has been widely generalized, with very general cost functions $c(x,y)$ instead of the Euclidean distance $|x-y|$ and more general measures and spaces. 

Let us start from the general picture. Consider a metric space $X$, that we suppose compact for simplicity\footnote{Most of the results that we present stay true without this assumptions, anyway, and we refer in particular to \cite{AmbGigSav} or \cite{villani} for details, since a large part of the analysis of \cite{OTAM} is performed under this simplfying assumption.} and a cost function $c:X\times X\to [0,+\infty]$. For simplicity of the exposition, we will suppose that $c$ is continuous and symmetric: $c(x,y)=c(y,x)$ (in particular, the target and source space will be the same space $X$).

The formulation proposed by Kantorovich of the problem raised by Monge is the following: given two probability measures $\mu,\nu\in\pical(X)$, consider the problem
\begin{equation}\label{kantorovich}
(KP)\quad\min\left\{\int_{X\times X}\!\!c\,\,\dd \gamma\;:\;\gamma\in\Pi(\mu,\nu)\right\},
\end{equation}
where
$\Pi(\mu,\nu)$ is the set of the so-called {\it transport plans}, i.e. 
$$\Pi(\mu,\nu)=\{\gamma\in\pical(X\times X):\,(\pi_0)_{\#}\gamma=\mu,\,(\pi_1)_{\#}\gamma=\nu,\}$$
where $\pi_0$ and $\pi_1$ are the two projections of $X\times X$ onto its factors. These probability measures over $X\times X$ are an alternative way to describe the displacement of the particles of $\mu$: instead of saying, for each $x$, which is the destination $T(x)$ of the particle originally located at $x$, we say for each pair $(x,y)$ how many particles go from $x$ to $y$. It is clear that this description allows for more general movements, since from a single point $x$ particles can a priori move to different destinations $y$. If multiple destinations really occur, then this movement cannot be described through a map $T$. 
It can be easily checked that if $(id, T)_{\#}\mu$ belongs to $\Pi(\mu,\nu)$ then $T$ pushes $\mu$ onto $\nu$ (i.e. $T_\#\mu=\nu$) and the functional takes the form $\int c(x,T(x))\dd\mu(x),$ thus generalizing Monge's problem.

The minimizers for this problem are called {\it
optimal transport plans} between $\mu$ and $\nu$. Should $\gamma$ be of
the form $(id, T)_{\#}\mu$ for a measurable map
$T:X\to X$ (i.e. when no splitting of the mass occurs), the map $T$ would be called {\it optimal
transport map} from $\mu$ to $\nu$.

This generalized problem by Kantorovich is much easier to handle than the original one proposed by Monge: for instance in the Monge case we would need existence of at least a map $T$ satisfying the constraints. This is not verified when $\mu=\delta_0$, if $\nu$ is not a single Dirac mass. On the contrary, there always exists at least a transport plan in $\Pi(\mu,\nu)$ (for instance we always have $\mu\otimes\nu\in\Pi(\mu,\nu)$). Moreover, one can state that $(KP)$ is the relaxation of the original problem by Monge: if one considers the problem in the same setting, where the competitors are transport plans, but sets the functional at $+\infty$ on all the plans that are not of the form $(id, T)_{\#}\mu$, then one has a functional on $\Pi(\mu,\nu)$ whose relaxation (in the sense of the largest lower-semicontinuous functional smaller than the given one) is the functional in $(KP)$ (see for instance Section 1.5 in \cite{OTAM}).

Anyway, it is important to note that an easy use of the Direct Method of Calculus of Variations (i.e. taking a minimizing sequence, saying that it is compact in some topology - here it is the weak convergence of probability measures - finding a limit, and proving semicontinuity, or continuity, of the functional we minimize, so that the limit is a minimizer) proves that a minimum does exist. 
As a consequence, if one is interested in the problem of Monge, the question may become``does this minimizer come from a transport map $T$?'' (note, on the contrary, that directly attacking by compactness and semicontinuity Monge's formulation is out of reach, because of the non-linearity of the constraint $T_\#\mu=\nu$, which is not closed under weak convergence).

Since the problem $(KP)$ is a linear optimization under linear constraints, an important tool will be duality theory, which is typically used for convex problems. We will find a dual problem $(DP)$ for $(KP)$ and exploit the relations between dual and primal.

The first thing we will do is finding a formal dual problem, by means of an inf-sup exchange.

First express the constraint  $\gamma\in\Pi(\mu,\nu)$ in the following way : notice that, if $\gamma$ is a non-negative measure on $X\times X$, then we have
$$\sup_{\phi,\,\psi}\int\phi \,\dd \mu +\int \psi \,\dd \nu - \int \left(\phi(x)+\psi(y)\right) \,\dd \gamma = \begin{cases}0&\mbox{ if }\gamma\in\Pi(\mu,\nu)\\
+\infty&\mbox{ otherwise }\end{cases}.$$

Hence, one can remove the constraints on $\gamma$ by adding the previous sup, since if they are satisfied nothing has been added and if they are not one gets $+\infty$ and this will be avoided by the minimization.
We may look at the problem we get and interchange the inf in $\gamma$ and the sup in $\phi,\psi$:
$$
\min_{\gamma\ge 0} \int c\,\,\dd \gamma +\sup_{\phi,\psi}\left(\!\!\int\phi \,\dd \mu +\int \psi \,\dd \nu - \int (\phi(x)+\psi(y)) \,\dd \gamma\!\right)
$$ becomes $$
\sup_{\phi,\psi}\int\phi \,\dd \mu +\int \psi \,\dd \nu +\inf_{\gamma\ge 0} \int\left(c(x,y)-(\phi(x)+\psi(y))\right) \,\dd \gamma.
$$
Obviously it is not always possible to exchange inf and sup, and the main tools to do this come from convex analysis. We refer to \cite{OTAM}, Section 1.6.3 for a simple proof of this fact, or to \cite{villani}, where the proof is based on Flenchel-Rockafellar duality (see, for instance, \cite{EkeTem}). Anyway, we insist that in this case it is true that $\inf\sup=\sup\inf$.

Afterwards, one can re-write the inf in $\gamma$ as a constraint on $\phi$ and $\psi$, since one has

$$\inf_{\gamma\geq 0} \int\left(c(x,y)-(\phi(x)+\psi(y))\right) \,\dd \gamma
= \begin{cases}0&\mbox{ if }\phi(x)+\psi(y)\leq c(x,y) \mbox{ for all }(x,y)\in X\times X\\
-\infty&\mbox{ otherwise }\end{cases}.$$

This leads to the following dual optimization problem: given the two probabilities $\mu$ and $\nu$ and the cost function $c:X\times X\to[0,+\infty]$ we consider the problem
\begin{equation}\label{dual}
(DP)\quad\max\left\{\int_{X}\!\phi \,\dd\mu+\!\!\int_{X}\!\psi \,\dd\nu\;:\;\phi,\psi\in C(X),\,\phi(x)\!+\!\psi(y)\leq c(x,y) \mbox{ for all }(x,y)\in X\times X\right\}.
\end{equation}

This problem does not admit a straightforward existence result, since the class of admissible functions lacks compactness. Yet, we can better understand this problem and find existence once we have introduced the notion of $c-$transform (a kind of generalization of the well-known Legendre transform).

\begin{definition}
Given a function $\chi:X\to\overline{\R}$ we define its {\it $c-$transform} (or $c-$conjugate function) by
$$\chi^c(y)=\inf_{x\in X}c(x,y)-\chi(x).$$
Moreover, we say that a function $\psi$ is {\it $c-$concave} if there exists $\chi$ such that $\psi=\chi^c$ and we denote by $\Psi_c(X)$ the set of $c-$concave functions.
\end{definition}

It is quite easy to realize that, given a pair $(\phi,\psi)$ in the maximization problem (DP), one can always replace it with $(\phi,\phi^c)$, and then with $(\phi^{cc},\phi^c)$, and the constraints are preserved and the integrals increased. Actually one could go on but it is possible to prove that $\phi^{ccc}=\phi^c$ for any function $\phi$. This is the same as saying that $\psi^{cc}=\psi$ for any $c-$concave function $\psi$, and this perfectly recalls what happens for the Legendre transform of convex funtions (which corresponds to the particular case $c(x,y)=-x\cdot y$). 

A consequence of these considerations is the following well-known result:
\begin{proposition}\label{prop-duality formula}
We have
\begin{equation}\label{duality formula}
\min\,(KP)=\max_{\phi\in\Psi_c(X)}\;\int_{X}\phi\,\,\dd \mu+\int_{X}\phi^c\,\,\dd \nu,
\end{equation}
where the max on the right hand side is realized. In particular the minimum value of $(KP)$ is a convex function of $(\mu,\nu)$, as it is a supremum of linear functionals.
\end{proposition}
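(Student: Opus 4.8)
The plan is to take the formal inf--sup computation already carried out above as the backbone of the argument, and to turn it into the clean statement \eqref{duality formula} in two moves: first accept the duality equality $\min(KP)=\sup(DP)$ (the genuinely nontrivial inf--sup exchange, justified by convex duality as referenced), and then upgrade the supremum in $(DP)$, taken over arbitrary continuous pairs $(\phi,\psi)$, to a maximum over pairs of the special form $(\phi,\phi^c)$ with $\phi\in\Psi_c(X)$. The last sentence about convexity will then be an immediate corollary.

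The reduction to $c$-concave potentials is the easy part, and it only exploits the sign of the measures. Given a feasible pair $(\phi,\psi)$, i.e. $\phi(x)+\psi(y)\leq c(x,y)$ for all $(x,y)$, for each fixed $y$ the constraint reads $\psi(y)\leq\inf_x\big(c(x,y)-\phi(x)\big)=\phi^c(y)$. Since $\nu\geq 0$, replacing $\psi$ by $\phi^c$ does not decrease $\int\psi\,\dd\nu$, and the pair $(\phi,\phi^c)$ is still feasible because $\phi(x)+\phi^c(y)\leq c(x,y)$ holds by the very definition of $\phi^c$. Repeating the operation on the first variable, one checks $\phi^{cc}\geq\phi$ (so $\int\phi\,\dd\mu$ does not decrease) and that $(\phi^{cc},\phi^c)$ remains feasible; moreover $\phi^{cc}$ is $c$-concave and $\phi^c=(\phi^{cc})^c$ thanks to $\phi^{ccc}=\phi^c$. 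Hence the supremum in $(DP)$ is unchanged if restricted to pairs $(\phi,\phi^c)$ with $\phi\in\Psi_c(X)$, which is exactly the right-hand side of \eqref{duality formula}.

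The existence of a maximizer is where the real work lies, and I expect it to be the main obstacle, since the class of admissible potentials is not obviously compact. The key observation is that, $X$ being compact and $c$ continuous, $c$ is \emph{uniformly} continuous, and this forces every $c$-concave function to inherit a common modulus of continuity: writing $\phi^c(y)=\inf_x\big(c(x,y)-\phi(x)\big)$ exhibits $\phi^c$ as an infimum of the maps $y\mapsto c(x,y)-\phi(x)$, which are equi-uniformly-continuous in $y$ with the modulus of $c$, and an infimum of such a family keeps that modulus. The delicate point is the additive-constant degeneracy: the objective $\phi\mapsto\int\phi\,\dd\mu+\int\phi^c\,\dd\nu$ is invariant under $\phi\mapsto\phi+a$, $\phi^c\mapsto\phi^c-a$, so compactness must be recovered by normalization. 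Fixing a base point $x_0$ and imposing $\phi(x_0)=0$ removes this freedom, and combined with the common modulus it yields a uniform bound together with equicontinuity on a maximizing sequence $\phi_n$. Ascoli--Arzel\`a then provides a uniformly convergent subsequence $\phi_n\to\phi$, with $\phi_n^c\to\phi^c$ uniformly as well; since $\Psi_c(X)$ is closed under uniform limits and the functional is continuous for uniform convergence, the limit $\phi$ realizes the maximum.

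Finally, for the convexity assertion it suffices to note that, for each fixed $\phi\in\Psi_c(X)$, the map $(\mu,\nu)\mapsto\int_X\phi\,\dd\mu+\int_X\phi^c\,\dd\nu$ is affine in $(\mu,\nu)$. By \eqref{duality formula} the value $\min(KP)$ equals the pointwise supremum of this family of affine functionals over $\phi\in\Psi_c(X)$, and a supremum of affine functions is convex, which proves the last statement.
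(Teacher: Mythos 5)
Your proposal is correct and follows essentially the same route as the paper: accept the inf--sup exchange giving $\min(KP)=\sup(DP)$, improve an admissible pair $(\phi,\psi)$ to $(\phi^{cc},\phi^c)$ via the $c$-transform, and obtain existence of a maximizer from the common modulus of continuity of $c$-concave functions (after normalizing the additive constant) together with Ascoli--Arzel\`a, with convexity in $(\mu,\nu)$ following since the value is a supremum of affine functionals. The only difference is that you spell out details the paper leaves implicit (the normalization $\phi(x_0)=0$ and the stability of $\Psi_c(X)$ and of the $c$-transform under uniform convergence), which is welcome but not a different argument.
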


\begin{definition}
The functions $\phi$ realizing the maximum in \eqref{duality formula} are called {\it Kantorovich potentials} for the transport from $\mu$ to $\nu$ (and will be often denoted by the symbol $\varphi$ instead of $\phi$).
\end{definition}

Notice that any $c-$concave function shares the same modulus of continuity of the cost $c$. Hence, if $c$ is uniformly continuous (which is always the case whenever $c$ is continuous and $X$ is compact), one can get a uniform modulus of continuity for the functions in $\Psi_c(X)$. This is the reason why one can prove existence for $(DP)$ (which is the same of the right hand side problem in Proposition \ref{prop-duality formula}), by applying Ascoli-Arzel\`a's Theorem.

We look at two interesting cases. When $c(x,y)$ is equal to the distance $d(x,y)$ on the metric space $X$, then we can easily see that 
\begin{equation}\label{|x-y|1}
\phi\in\Psi_{c}(X) \Longleftrightarrow \phi\mbox{  is a $1$-Lipschitz function}
\end{equation}
and 
\begin{equation}\label{|x-y|2}
\phi\in\Psi_{c}(X)\impl \phi^c=-\phi.
\end{equation}
Another interesting case is the case where $X=\Omega\subset\R^d$ and $c(x,y)=\frac 12|x-y|^2$. In this case we have
$$\phi\in\Psi_{c}(X) \Longrightarrow \;x\mapsto \frac{x^2}{2}-\phi(x)\text{  is a convex function.}
$$
Moreover, if $X=\R^d$ this is an equivalence.

A consequence of \eqref{|x-y|1} and \eqref{|x-y|2} is that, in the case where $c=d$, Formula \ref{duality formula} may be re-written as
\begin{equation}\label{|x-y|3}
\min\,(KP)=\max\,(DP)=\max_{\phi\in\Lip_1}\int_{X}\phi\,\dd(\mu-\nu).
\end{equation}

We now concentrate on the quadratic case when $X$ is a domain $\Omega\subset \R^d$, and look at the existence of optimal transport maps $T$. From now on, we will use the word {\it domain} to denote the closure of a bounded and connected open set.

The main tool is the duality result. If we have equality between the minimum of $(KP)$ and the maximum of $(DP)$ and both extremal values are realized, one can consider an optimal transport plan $\gamma$ and a Kantorovich potential $\varphi$ and write
$$\varphi(x)+\varphi^c(y)\leq c(x,y) \mbox{ on }X\times X \;\mbox{ and }\;\varphi(x)+\varphi^c(y)= c(x,y) \mbox{ on }\spt\gamma.$$
The equality on $\spt\gamma$ is a consequence of the inequality which is valid everywhere and of 
$$\int c\,\,\dd \gamma=\int\varphi \,\,\dd \mu+\int\varphi^c \,\,\dd \nu=\int(\varphi(x)+\varphi^c(y))\,\dd \gamma,$$
which implies equality $\gamma-$a.e. These functions being continuous, the equality passes to the support of the measure.
Once we have that, let us fix a point $(x_0,y_0)\in\spt\gamma$. One may deduce from the previous computations that
$$x\mapsto \varphi(x)-\frac12|x-y_0|^2\quad\mbox{ is maximal at }x=x_0$$
and, if $\varphi$ is differentiable at $x_0$, one gets $\nabla\varphi(x_0)=x_0-y_0,$ i.e. $y_0=x_0-\nabla\varphi(x_0)$. This shows that only one unique point $y_0$ can be such that $(x_0,y_0)\in \spt\gamma$, which means that $\gamma$ is concentrated on a graph. The map $T$ providing this graph is of the form $x\mapsto x-\nabla\varphi(x)=\nabla u(x)$ (where $u(x):= \frac{x^2}{2}-\varphi(x)$ is a convex function). This shows the following well-known theorem, due to Brenier (\cite{Brenier polar,Brenier91}). Note that this also gives uniqueness of the optimal transport plan and of the gradient of the Kantorovich potential. The only technical point to let this strategy work is the $\mu$-a.e. differentiability of the potential $\varphi$. Since $\varphi$ has the same regularity of a convex function, and convex function are locally Lipschitz, it is differentiable Lebesgue-a.e., which allows to prove the following:

\begin{theorem}\label{maintransport}
Given $\mu$ and $\nu$ probability measures on a domain $\Omega\subset\R^d$ there exists an optimal transport plan $\gamma$ for the quadratic cost $\frac 12|x-y|^2$. It is unique and of the form $(id, T)_{\#}\mu$, provided $\mu$ is absolutely continuous. Moreover there exists also at least a Kantorovich potential $\varphi$, and the gradient $\nabla\varphi$ is uniquely determined $\mu-$a.e. (in particular $\varphi$ is unique up to additive constants if the density of $\mu$ is positive a.e. on $\Omega$). The optimal transport map $T$ and the potential $\varphi$ are linked by $T(x)=x-\nabla\varphi(x)$. Moreover, the optimal map $T$ is equal a.e. to the gradient of a convex function $u$, given by $u(x):= \frac{x^2}{2}-\varphi(x)$.\end{theorem}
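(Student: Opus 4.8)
The plan is to combine the existence of optimal plans---already guaranteed by the Direct Method of the Calculus of Variations discussed above---with the duality result of Proposition~\ref{prop-duality formula}, in order to show that, when $\mu$ is absolutely continuous, every optimal plan must be concentrated on the graph of a single map. First I would fix an optimal plan $\gamma$ and a Kantorovich potential $\varphi$ realizing the maximum in \eqref{duality formula}. Comparing the identity $\min\,(KP)=\int\varphi\,\dd\mu+\int\varphi^c\,\dd\nu=\int(\varphi(x)+\varphi^c(y))\,\dd\gamma$ with the inequality $\varphi(x)+\varphi^c(y)\leq\frac12|x-y|^2$, which holds everywhere, I get that equality holds $\gamma$-a.e., and hence, by continuity of $\varphi$ and $\varphi^c$, on all of $\spt\gamma$.

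Next, fixing a point $(x_0,y_0)\in\spt\gamma$, the everywhere inequality together with the equality at $x_0$ shows that $x\mapsto\varphi(x)-\frac12|x-y_0|^2$ attains its maximum at $x=x_0$. Wherever $\varphi$ is differentiable, the first-order optimality condition gives $\nabla\varphi(x_0)=x_0-y_0$, i.e. $y_0=x_0-\nabla\varphi(x_0)$ is uniquely determined by $x_0$. The decisive structural input is that, for the quadratic cost, $\varphi\in\Psi_c(X)$ forces $u(x):=\frac{x^2}{2}-\varphi(x)$ to be convex, hence locally Lipschitz and Lebesgue-a.e. differentiable; since $\mu\ll\Leb$, this transfers to $\mu$-a.e. differentiability of $\varphi$. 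Consequently, for $\mu$-a.e. $x_0$ the fibre $\{y:(x_0,y)\in\spt\gamma\}$ reduces to the singleton $\{x_0-\nabla\varphi(x_0)\}$, so $\gamma$ is concentrated on the graph of $T(x)=x-\nabla\varphi(x)=\nabla u(x)$, and a plan concentrated on a graph over $\mu$ is precisely of the form $\gamma=(\id,T)_\#\mu$.

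For uniqueness, I would rerun the complementarity argument with the \emph{same} fixed potential $\varphi$ applied to an arbitrary optimal plan $\gamma'$: every optimal plan is supported on the set where $\varphi(x)+\varphi^c(y)=\frac12|x-y|^2$, hence on the very same graph, so $\gamma'=(\id,T)_\#\mu=\gamma$. This also pins down $\nabla\varphi=\id-T$ uniquely $\mu$-a.e., and if the density of $\mu$ is positive a.e. on $\Omega$ one recovers uniqueness of $\varphi$ up to additive constants. The convexity of $u$, already available from the characterization of $c$-concave functions for the quadratic cost, closes the final assertion $T=\nabla u$.

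The step I expect to be the main obstacle is precisely the a.e.\ differentiability of $\varphi$: it relies on identifying $\varphi$ with (minus) a convex function and invoking that a convex function is differentiable outside a Lebesgue-negligible set (Rademacher, or even the finer fact that its non-differentiability set is $(d-1)$-rectifiable), after which $\mu\ll\Leb$ annihilates that set. Boundary effects on $\partial\Omega$ are harmless, being Lebesgue-null and therefore $\mu$-null; the only genuine hypothesis used beyond duality is the absolute continuity of $\mu$, which is exactly what rules out mass splitting and upgrades the optimal plan to an optimal map.
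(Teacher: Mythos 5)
Your proposal is correct and follows essentially the same route as the paper: duality forces $\varphi(x)+\varphi^c(y)=\frac12|x-y|^2$ on $\spt\gamma$, the first-order condition at a support point gives $y_0=x_0-\nabla\varphi(x_0)$, and the $\mu$-a.e.\ differentiability of $\varphi$ (via the convexity of $u=\frac{x^2}{2}-\varphi$ and $\mu\ll\Leb$) upgrades the plan to the map $T=\nabla u$. Your explicit rerun of the complementarity argument with a fixed potential to get uniqueness of the plan is exactly what the paper leaves as a one-line remark, so nothing is missing.
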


Actually, the existence of an optimal transport map is true under weaker assumptions: we can replace the condition of being absolutely continuous with the condition ``$\mu(A)=0$ for any $A\subset\R^d$ such that $\haus^{d-1}(A)<+\infty$'' or with any condition which ensures that the non-differentiability set of $\varphi$ is negligible (and convex function are more regular than locally Lipschitz functions).

In Theorem \ref{maintransport} only the part concerning the optimal map $T$ is not symmetric in $\mu$ and $\nu$: hence the uniqueness of the Kantorovich potential is true even if it $\nu$ (and not $\mu$) has positive density a.e. (since one can retrieve $\varphi$ from $\varphi^c$ and viceversa).

We stress that Theorem \ref{maintransport} admits a converse implication and that any gradient of a convex function is indeed optimal between $\mu$ and its image measure. Moreover, Theorem \ref{maintransport} can be translated in very easy terms in the one-dimensional case $d=1$: given a non-atomic measure $\mu$ and another measure $\nu$, both in $\pical(\R)$, there exists a unique monotone increasing transport map $T$ such that $T_\#\mu=\nu$, and it is optimal for the quadratic cost.

Finally, the same kind of arguments could be adapted to prove existence and uniqueness of an optimal map for other costs, in particular to costs of the form $c(x,y)=h(x-y)$ for a stricty convex function $h:\R^d\to \R$, which includes all the costs of the form $c(x,y)=|x-y|^p$ with $p>1$. In the one-dimensional case it even happens that the same monotone increasing map is optimal for every $p\geq 1$ (and it is the unique optimal map for $p>1$)!

\subsection{The Wasserstein distances}

Starting from the values of the problem $(KP)$ in \eqref{kantorovich} we can define a set of distances over $\pical(X)$. 

We mainly consider costs of the form $c(x,y)=|x-y|^p$ in $X=\Omega\subset\R^d$, but the analysis can be adapted to a power of the distance in a more general metric space $X$. The exponent $p$ will always be taken in $[1,+\infty[$ (we will not discuss the case $p=\infty$) in order to take advantage of the properties of the $L^p$ norms.  When $\Omega$ is unbounded we need to restrict our analysis to the following set of probabilities
$$\pp:=\left\{\mu\in\pical(\Omega)\,:\,\int_\Omega |x|^p\,\dd\mu(x)<+\infty\right\}.$$
In a metric space, we fix an arbitrary point $x_0\in X$, and set 
$$\pical_p(X):=\left\{\mu\in\pical(X)\,:\,\int_X d(x,x_0)^p\,\dd\mu(x)<+\infty\right\}$$ 
(the finiteness of this integral does not depend on the choice of $x_0$).

The distances that we want to consider are defined in the following way: for any $p\in [ 1,+\infty[$ set
$$W_p(\mu,\nu)=\big(\min\,(KP)\text{ with }c(x,y)=|x-y|^p\big)^{1/p}.$$

The quantities that we obtain in this way are called Wasserstein distances\footnote{They are named after L. Vaserstein (whose name is sometimes spelled Wasserstein), but this choice is highly debated, in particular in Russia, as the role he played in these distances is very marginal. However, this is now the standard name used in Western countries, probably due to the terminology used in \cite{JKO,Otto porous}, even if other names have been suggested, such as Monge-Kantorovich distances, Kantorovich-Rubinstein\dots\ and we will stick to this terminology.}. They are very important in many fields of applications and they seem a natural way to describe distances between equal amounts of mass distributed on a same space. 

It is interesting to compare these distances to $L^p$ distances between densities (a comparison which is meaningful when we consider absolutely continous measures on $\R^d$, for instance). A first observation is the very different behavior of these two classes of distances. We could say that, if $L^p$ distances can be considered ``vertical'', Wasserstein distances are instead ``horizontal''. This consideration is very informal, but is quite natural if one associates with every absolutely continuous measure the graph of its density. To compute $||f-g||_{L^p}$ we need to look, for every point $x$, at the distance between $f(x)$ and $g(x)$, which corresponds to a vertical displacement between the two graphs, and then integrate this quantity. On the contrary, to compute $W_p(f,g)$ we need to consider the distance between a point $x$ and a point $T(x)$ (i.e. an horizontal displacement on the graph) and then to integrate this, for a particular pairing between $x$ and $T(x)$ which makes a coupling between $f$ and $g$.

 A first example where we can see the very different behavior of these two ways of computing distances is the following: take two densities $f$ and $g$ supported on $[0,1]$, and define $g_h$ as $g_h(x)=g(x-h)$. As soon as $|h|>1$, the $L^p$ distance between $f$ and $g_h$ equals $(||f||_{L^p}^p+||g||_{L^p}^p)^{1/p}$, and does not depend on the ``spatial'' information consisting in $|h|$. On the contrary, the $W_p$ distance  between $f$ and $g_h$ is of the order of $|h|$ (for $h\to\infty$) and depends much more on the displacement than on the shapes of $f$ and $g$.

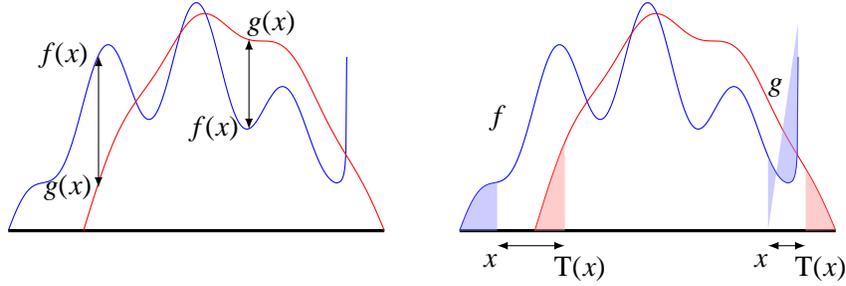
\begin{figure}\label{LpWp}
\begin{center}
\begin{tikzpicture}
\begin{scope}
\draw  [very thick] (0,0)--(5,0);
\draw [blue,domaine={0}{4.5},samples=400] plot (\x,{(\x)*(4.5-\x)*(1+cos(5*\x r))*0.15+(\x)*(4.5-\x)*0.27+0.06*(\x)*(4.5-\x)^0.3});
\draw [red,domaine={1}{5},samples=150] plot (\x,{(\x-1)*(5-\x)*(1.8+cos(5*\x r))*0.04+(\x-1)*(5-\x)*0.5+0.06*(\x-1)*(5-\x)^2});
\draw [<->,>=latex](1.2,0.57) --(1.2,2.33)  node[left]{$f(x)$};
\draw [<->,>=latex](3.2,1.35) node[left]{$f(x)$}--(3.2,2.55)  ;
\draw (3.5,2.75)  node{$g(x)$};
\draw (1.25,0.57) node[left]{$g(x)$};
\end{scope}
\begin{scope}[shift={(6,0)}]

\draw [very thick] (0,0)--(5,0);
\draw [blue,domaine={0}{4.5},samples=400] plot (\x,{(\x)*(4.5-\x)*(1+cos(5*\x r))*0.15+(\x)*(4.5-\x)*0.27+0.06*(\x)*(4.5-\x)^0.3});
\draw [red,domaine={1}{5},samples=150] plot (\x,{(\x-1)*(5-\x)*(1.8+cos(5*\x r))*0.04+(\x-1)*(5-\x)*0.5+0.06*(\x-1)*(5-\x)^2});
\fill[blue!40, opacity=0.5]  (0.5,0)--plot[domaine={0}{0.5},samples=300]  (\x,{(\x)*(4.5-\x)*(1+cos(5*\x r))*0.15+(\x)*(4.5-\x)*0.27+0.06*(\x)*(4.5-\x)^0.3})--cycle;
\fill[red!40, opacity=0.5]  (1.4,0)-- plot [red,domaine={1}{1.4},samples=150] (\x,{(\x-1)*(5-\x)*(1.8+cos(5*\x r))*0.04+(\x-1)*(5-\x)*0.5+0.06*(\x-1)*(5-\x)^2})--cycle;
\fill[blue!40, opacity=0.5]  (4.1,0)--plot[domaine={4.1}{4.5},samples=300]  (\x,{(\x)*(4.5-\x)*(1+cos(5*\x r))*0.15+(\x)*(4.5-\x)*0.27+0.06*(\x)*(4.5-\x)^0.3})--cycle;
\fill[red!40, opacity=0.5]  (4.6,0)-- plot [red,domaine={4.6}{5},samples=150] (\x,{(\x-1)*(5-\x)*(1.8+cos(5*\x r))*0.04+(\x-1)*(5-\x)*0.5+0.06*(\x-1)*(5-\x)^2})--cycle;
\draw [<->,>=latex] (0.5,-0.2)--(1.4,-0.2);
\draw (0.4,-0.4) node{$x$};
\draw (1.6,-0.5) node{$\T(x)$};

\draw [<->,>=latex] (4.1,-0.2)--(4.6,-0.2);
\draw (4,-0.4) node{$x$};
\draw (4.8,-0.5) node{$\T(x)$};

\draw (0.5,1.5) node{$f$};
\draw (4.2,1.9) node{$g$};

\end{scope}
\end{tikzpicture}
\caption{``Vertical'' vs ``horizontal'' distances (the transport $\T$ is computed in the picture on the right using 1D considerations, imposing equality between the blue and red areas under the graphs of $f$ and $g$).}
\end{center}
\end{figure}

We now analyze some properties of these distances. Most proofs can be found in \cite{OTAM}, chapter 5, or in \cite{villani} or \cite{AmbGigSav}.

First we underline that, as a consequence of H\"older (or Jensen) inequalities, the Wasserstein distances are always ordered, i.e. $W_{p_1}\leq W_{p_2}$ if $p_1\leq p_2$. Reversed inequalities are possible only if $\Omega$ is bounded, and in this case we have, if set $D=\diam(\Omega)$, for $p_1\leq p_2$,
$$W_{p_1}\leq W_{p_2}\leq D^{1-p_1/p_2}W_{p_1}^{p_1/p_2}.$$

This automatically guarantees that, if the quantities $W_p$ are distances, then they induce the same topology, at least when $\Omega$ is bounded. But first we should check that they are distances\dots\ 

\begin{proposition} The quantity $W_p$ defined above is a distance over $\pp$.\end{proposition}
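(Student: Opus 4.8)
The plan is to check the three distance axioms, postponing the triangle inequality, which is the only delicate point. First I would record finiteness on $\pp$: for $\mu,\nu\in\pp$ the product plan $\mu\otimes\nu$ lies in $\Pi(\mu,\nu)$ and, using $(a+b)^p\leq 2^{p-1}(a^p+b^p)$, satisfies $\int|x-y|^p\,\dd(\mu\otimes\nu)\leq 2^{p-1}\left(\int|x|^p\,\dd\mu+\int|y|^p\,\dd\nu\right)<+\infty$, so $W_p$ is real-valued. Non-negativity is immediate since the cost $|x-y|^p$ is non-negative, and symmetry follows because the cost is symmetric and the swap map $(x,y)\mapsto(y,x)$ carries any $\gamma\in\Pi(\mu,\nu)$ to a plan in $\Pi(\nu,\mu)$ of equal total cost.

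For the identity of indiscernibles, if $\mu=\nu$ then the diagonal plan $(\id,\id)_\#\mu\in\Pi(\mu,\mu)$ has zero cost, so $W_p(\mu,\mu)=0$. Conversely, if $W_p(\mu,\nu)=0$, an optimal $\gamma$ satisfies $\int|x-y|^p\,\dd\gamma=0$, forcing $\gamma$ to be concentrated on the diagonal $\{x=y\}$. Testing the two marginals against an arbitrary continuous $\phi$ then gives $\int\phi\,\dd\mu=\int\phi(x)\,\dd\gamma=\int\phi(y)\,\dd\gamma=\int\phi\,\dd\nu$, whence $\mu=\nu$.

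The triangle inequality is the heart of the matter and rests on a gluing argument. Given $\mu_1,\mu_2,\mu_3\in\pp$, choose optimal plans $\gamma_{12}\in\Pi(\mu_1,\mu_2)$ and $\gamma_{23}\in\Pi(\mu_2,\mu_3)$, which share the common marginal $\mu_2$. The key step is to disintegrate both with respect to $\mu_2$ and build a measure $\sigma\in\pical(\Omega\times\Omega\times\Omega)$ whose projection onto the first two coordinates is $\gamma_{12}$ and whose projection onto the last two is $\gamma_{23}$. Then the projection of $\sigma$ onto the first and third coordinates is an admissible competitor $\gamma_{13}\in\Pi(\mu_1,\mu_3)$, so
$$W_p(\mu_1,\mu_3)\leq\left(\int|x-z|^p\,\dd\sigma\right)^{1/p}.$$
Applying the pointwise bound $|x-z|\leq|x-y|+|y-z|$ and then Minkowski's inequality in $L^p(\sigma)$ yields
$$\left(\int|x-z|^p\,\dd\sigma\right)^{1/p}\leq\left(\int|x-y|^p\,\dd\sigma\right)^{1/p}+\left(\int|y-z|^p\,\dd\sigma\right)^{1/p}=W_p(\mu_1,\mu_2)+W_p(\mu_2,\mu_3),$$
the final equality holding because each integrand depends on only two coordinates and thus reduces to the cost of $\gamma_{12}$, respectively $\gamma_{23}$, both of which were chosen optimal.

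The main obstacle is the construction of $\sigma$, namely the gluing lemma, which relies on the disintegration theorem for measures on a Polish space; I would invoke this standard measure-theoretic tool (as in \cite{OTAM} or \cite{AmbGigSav}) rather than reprove it. Once $\sigma$ is available, the remainder is a direct application of the triangle inequality for $|\cdot|$ together with Minkowski's inequality, so no further difficulty arises.
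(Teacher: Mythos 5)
Your proof is correct, and it takes a genuinely different route to the triangle inequality than the paper does. The paper composes optimal transport \emph{maps}: it takes the optimal $T$ from $\mu$ to $\varrho$ and $S$ from $\varrho$ to $\nu$, observes that $S\circ T$ is admissible from $\mu$ to $\nu$, and applies Minkowski in $L^p(\mu)$ to $\|S\circ T-\id\|_{L^p(\mu)}$. This is quick but only works when the optimal maps exist, so the paper restricts to $p>1$ and absolutely continuous measures and defers the general case to an approximation argument that it does not carry out. Your gluing argument works at the level of \emph{plans}: you disintegrate $\gamma_{12}$ and $\gamma_{23}$ over the common marginal $\mu_2$, build $\sigma$ on the triple product, project onto the outer coordinates, and apply Minkowski in $L^p(\sigma)$. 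This covers all $p\geq 1$ and arbitrary measures in $\pp$ in one stroke, with no approximation step, at the price of invoking the disintegration theorem (a standard but nontrivial measure-theoretic input that the paper's map-based argument avoids). Your treatment of finiteness, symmetry, and the identity of indiscernibles matches the paper's where the paper spells it out, and supplies the routine details where it does not.
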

\begin{proof}
First, let us note that $W_p\geq 0$. Then, we also remark that $W_p(\mu,\nu)=0$ implies that there exists $\gamma\in\Pi(\mu,\nu)$ such that $\int|x-y|^p\,\dd\gamma=0$. Such a $\gamma\in\Pi(\mu,\nu)$ is concentrated on $\{x=y\}$. This implies $\mu=\nu$ since, for any test function $\phi$, we have
$$\int \phi\,\,\dd\mu=\int \phi(x)\,\dd\gamma=\int \phi(y)\,\dd\gamma=\int \phi\,\,\dd\nu.$$

We need now to prove the triangle inequality. We only give a proof in the case $p>1$, with absolutely continuous measures. 

Take three measures $\mu,\varrho$ and $\nu$, and suppose that $\mu$ and $\varrho$ are absolutely continuous. Let $T$ be the optimal transport from $\mu$ to $\varrho$ and $S$ the optimal one from $\varrho$ to $\nu$. Then $S\circ T$ is an admissible transport from $\mu$ to $\nu$, since $(S\circ T)_\#\mu=S_\#(T_\#\mu)=S_\#\varrho=\nu$. We have
\begin{eqnarray*}
W_p(\mu,\nu)\leq \left(\int |S(T(x))-x|^p\,\dd\mu(x)\right)^{\frac 1p}&=&||S\circ T-\id||_{L^p(\mu)}\\
&\leq& ||S\circ T-T||_{L^p(\mu)}+||T-\id||_{L^p(\mu)}.
\end{eqnarray*}
Moreover,
$$||S\circ T-T||_{L^p(\mu)}=\left(\int |S(T(x))-T(x)|^p\,\dd\mu(x)\right)^{\frac 1p}=\left(\int |S(y)-y|^p\dd\varrho(y)\right)^{\frac 1p}$$
and this last quantity equals $W_p(\varrho,\nu)$. Moreover, $||T-\id||_{L^p(\mu)}=W_p(\mu,\varrho)$, whence
$$W_p(\mu,\nu)\leq W_p(\mu,\varrho)+W_p(\varrho,\nu).$$
This gives the proof when $\mu,\varrho\ll\lcal^d$ and $p>1$. For the general case, an approximation is needed (but other arguments can also apply, see, for instance, \cite{OTAM}, Section 5.1).
\end{proof}

We now give, without proofs, two results on the topology induced by $W_p$ on a general metric space $X$.
\begin{theorem}
If $X$ is compact, for any $p\geq 1$ the function $W_p$ is a distance over $\pical(X)$ and the convergence with respect to this distance is equivalent to the weak convergence of probability measures.\end{theorem}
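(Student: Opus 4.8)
The plan is to treat the two assertions—that $W_p$ is a distance and that $W_p$-convergence coincides with weak convergence—separately, reducing as much as possible to the case $p=1$, where the Kantorovich--Rubinstein duality \eqref{|x-y|3} is available. Since $X$ is compact we have $d(x,x_0)\leq D:=\diam(X)<+\infty$, so that $\pical_p(X)=\pical(X)$ and every probability measure is an admissible competitor; the distance part is then essentially the content of the previous Proposition. Positivity, symmetry (from $c(x,y)=c(y,x)$), and the implication $W_p(\mu,\nu)=0\Rightarrow\mu=\nu$ are obtained exactly as before. For the triangle inequality on arbitrary (not necessarily absolutely continuous) measures I would replace the composition-of-maps argument by the gluing lemma: given optimal plans $\gamma_{12}\in\Pi(\mu_1,\mu_2)$ and $\gamma_{23}\in\Pi(\mu_2,\mu_3)$, I glue them along their common marginal $\mu_2$ into a measure on $X\times X\times X$ and apply the Minkowski inequality in $L^p$ to the displacements $d(x,y)$ and $d(y,z)$, the projection onto the first and third coordinates yielding an admissible plan in $\Pi(\mu_1,\mu_3)$.

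For the equivalence of topologies, the easy direction is that $W_p$-convergence implies weak convergence. Here I would first use the stated ordering $W_1\leq W_p$ to reduce to $p=1$, and then invoke \eqref{|x-y|3}: if $W_1(\mu_n,\mu)\to 0$, then for every Lipschitz $\phi$ (with constant $L$) one has $\int\phi\,\dd(\mu_n-\mu)\leq L\,W_1(\mu_n,\mu)\to 0$, so the integrals against Lipschitz test functions converge. Since Lipschitz functions are uniformly dense in $C(X)$ on a compact space and the $\mu_n$ are uniformly bounded, a standard three-$\varepsilon$ argument upgrades this to $\int f\,\dd\mu_n\to\int f\,\dd\mu$ for all $f\in C(X)$, that is $\mu_n\rightharpoonup\mu$.

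The hard part will be the converse: weak convergence implies $W_p$-convergence. Again I would reduce to $p=1$, this time through the reverse bound $W_p\leq D^{1-1/p}W_1^{1/p}$ valid on the bounded space $X$, so it suffices to prove $W_1(\mu_n,\mu)\to 0$ whenever $\mu_n\rightharpoonup\mu$. The obstacle is that $W_1$ is a \emph{supremum} over the whole class $\Lip_1$ of $1$-Lipschitz potentials, so the pointwise convergence of each $\int\phi\,\dd(\mu_n-\mu)$ is not enough—one needs convergence that is uniform in $\phi$. I would overcome this by compactness of the competitors: normalizing $\phi(x_0)=0$, the class $\{\phi\in\Lip_1:\phi(x_0)=0\}$ is equi-Lipschitz and uniformly bounded by $D$, hence relatively compact in $C(X)$ by Ascoli--Arzel\`a. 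Then I argue by contradiction: if $W_1(\mu_n,\mu)\geq\varepsilon$ along a subsequence, I pick near-optimal potentials $\phi_n$ with $\int\phi_n\,\dd(\mu_n-\mu)\geq\varepsilon/2$, extract $\phi_n\to\phi$ uniformly, and pass to the limit using both the uniform convergence $\phi_n\to\phi$ and the weak convergence $\mu_n\rightharpoonup\mu$ to obtain $\int\phi_n\,\dd(\mu_n-\mu)\to\int\phi\,\dd(\mu-\mu)=0$, a contradiction. This compactness of the dual potentials is exactly the step where the compactness of $X$ is essential.
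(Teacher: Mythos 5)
Your proof is correct. For the equivalence of topologies you follow exactly the route the paper indicates (it states this theorem without proof, but sketches the strategy): establish the result for $p=1$ via the duality formula \eqref{|x-y|3}, then transfer it to general $p$ using the two-sided comparison $W_1\leq W_p\leq D^{1-1/p}W_1^{1/p}$ on the bounded space $X$. Your contradiction argument with Ascoli--Arzel\`a on the normalized class $\{\phi\in\Lip_1:\phi(x_0)=0\}$ is the standard way to make the ``uniformity in $\phi$'' issue precise, and the normalization is legitimate since $\mu_n-\mu$ has zero total mass. Where you genuinely depart from the paper is the triangle inequality: the paper's preceding proposition proves it by composing optimal transport \emph{maps}, which requires $p>1$ and absolutely continuous measures, and explicitly defers the general case to an approximation argument. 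Your use of the gluing lemma together with Minkowski's inequality in $L^p(\sigma)$ handles arbitrary measures and all $p\geq 1$ in one stroke, at the cost of invoking the disintegration/gluing construction, which the paper never introduces. Both are standard; yours is the more self-contained route to the full statement, while the paper's map-based argument is more elementary given the machinery it has already set up (Brenier's theorem).
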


To prove that the convergence according to $W_p$ is equivalent to weak convergence one first establish this result for $p=1$, through the use of the duality formula in the form \eqref{|x-y|3}. Then it is possible to use the inequalities between the distances $W_p$ (see above) to extend the result to a general $p$.

The case of a noncompact space $X$ is a little more difficult. As we said, the distance is only defined on a subset of the whole space of probability measures, to avoid infinite values. Set, for a fixed reference point $x_0$ which can be chosen to be $0$ in the Euclidean space,
$$m_p(\mu):=\int_{X}d(x,x_0)^p\dd\mu(x).$$
In this case, the distance $W_p$ is only defined on $\pical_p(X):=\left\{\mu\in\pical(X)\,:\,m_p(\mu)<+\infty\right\}$. We have
\begin{theorem}\label{conWp}
For any $p\geq 1$ the function $W_p$ is a distance over $\mathcal{P}_p(X)$ and, given a measure $\mu$ and a sequence $(\mu_n)_n$ in $\mathbb W_p(X)$, the following are equivalent:
\begin{itemize}
\item $\mu_n\to\mu$ according to $W_p$;
\item $\mu_n\deb\mu$ and $m_p(\mu_n)\to m_p(\mu)$;
\item $\int_{X}\phi\,\,\dd \mu_n\to\int_{X}\phi\,\,\dd \mu$ for any $\phi\in C^0(X)$ whose growth is at most of order $p$ (i.e. there exist constants $A$ and $B$ depending on $\phi$ such that $|\phi(x)|\leq A+Bd(x,x_0)^p$ for any $x$).
\end{itemize}
\end{theorem}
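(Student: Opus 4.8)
The plan is to first confirm that $W_p$ is a genuine distance on $\mathcal{P}_p(X)$, and then to prove the stated equivalences by establishing the implications (first)$\Rightarrow$(second), (second)$\Leftrightarrow$(third), and finally the delicate (second)$\Rightarrow$(first). Finiteness of $W_p$ on $\mathcal{P}_p(X)$ follows by testing the minimization against the product plan $\mu\otimes\nu\in\Pi(\mu,\nu)$ and using the elementary bound $d(x,y)^p\leq 2^{p-1}(d(x,x_0)^p+d(y,x_0)^p)$, which gives $W_p(\mu,\nu)^p\leq 2^{p-1}(m_p(\mu)+m_p(\nu))<+\infty$; symmetry and positivity are clear, and the triangle inequality is obtained exactly as in the bounded case treated above (by composition of optimal transports, or by a gluing of optimal plans in full generality).

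For the easy implications I would argue as follows. If $W_p(\mu_n,\mu)\to 0$, then since $W_1\leq W_p$ we also have $W_1(\mu_n,\mu)\to 0$; rewriting $W_1$ through the duality \eqref{|x-y|3} as $\sup_{\phi\in\Lip_1}\int_X\phi\,\dd(\mu_n-\mu)$ shows $\int\phi\,\dd\mu_n\to\int\phi\,\dd\mu$ for every bounded Lipschitz $\phi$, whence $\mu_n\deb\mu$ by the portmanteau criterion. For the moments, I would use that $\Pi(\mu,\delta_{x_0})$ reduces to the single plan $\mu\otimes\delta_{x_0}$, so that $W_p(\mu,\delta_{x_0})^p=m_p(\mu)$; then the triangle inequality $|W_p(\mu_n,\delta_{x_0})-W_p(\mu,\delta_{x_0})|\leq W_p(\mu_n,\mu)$ yields $m_p(\mu_n)\to m_p(\mu)$. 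The equivalence (second)$\Leftrightarrow$(third) is purely measure-theoretic: (third)$\Rightarrow$(second) is immediate by testing against bounded continuous functions and against $\phi(x)=d(x,x_0)^p$, while (second)$\Rightarrow$(third) is a uniform-integrability argument. The key sublemma is that $\mu_n\deb\mu$ together with $m_p(\mu_n)\to m_p(\mu)$ forces $\lim_{R\to\infty}\sup_n\int_{\{d(x,x_0)\geq R\}}d(x,x_0)^p\,\dd\mu_n=0$; this tail control then lets one split any $\phi$ with $|\phi|\leq A+Bd(\cdot,x_0)^p$ into a bounded continuous part (handled by weak convergence) and a uniformly negligible tail.

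The core of the proof, and the main obstacle, is (second)$\Rightarrow$(first), i.e. deducing $W_p$-convergence from weak plus moment convergence. Here I would reduce to the compact case by truncation. Fix $\varepsilon>0$ and, using the uniform integrability above, choose a radius $R$ with $\mu(\{d(x,x_0)=R\})=0$ (all but countably many $R$ work) such that $\int_{\{d(x,x_0)>R\}}d(x,x_0)^p\,\dd\mu_n<\varepsilon$ for all large $n$, and likewise for $\mu$. Writing $B_R=\{d(\cdot,x_0)\leq R\}$, define the truncated probability measures $\tilde\mu_n:=\mu_n\mres B_R+\mu_n(X\setminus B_R)\,\delta_{x_0}$ and $\tilde\mu$ analogously. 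Keeping the inner mass fixed and sending the outer mass to $x_0$ is an admissible coupling, giving the explicit bound $W_p(\mu_n,\tilde\mu_n)^p\leq\int_{\{d>R\}}d(x,x_0)^p\,\dd\mu_n<\varepsilon$, and similarly for $\mu$. Since $\mu(\partial B_R)=0$, weak convergence passes to the restrictions and to the tail masses, so $\tilde\mu_n\deb\tilde\mu$ as probability measures on the compact set $\overline{B_R}$; the compact-case theorem stated above then gives $W_p(\tilde\mu_n,\tilde\mu)\to 0$, noting that a coupling on $\overline{B_R}\times\overline{B_R}$ is also a coupling on $X\times X$ with the same cost, so the intrinsic and the ambient $W_p$ agree there. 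Combining through the triangle inequality gives $\limsup_n W_p(\mu_n,\mu)\leq 2\varepsilon^{1/p}$, and letting $\varepsilon\to 0$ concludes.

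The delicate points to watch are exactly the uniform-integrability sublemma (the genuine analytic content, which I would prove by comparing $\int d^p\,\dd\mu_n$ with $\int\min(d^p,c)\,\dd\mu_n$ through bounded continuous truncations in the value of $d^p$) and the choice of a continuity radius $R$ making truncation weakly continuous; once these are secured, the reduction to the already-established compact case is routine.
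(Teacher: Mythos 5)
The paper states this theorem explicitly ``without proofs'' and refers to \cite{OTAM}, \cite{villani}, \cite{AmbGigSav}, so there is no in-paper argument to compare against; your proposal has to stand on its own. Most of it does: the finiteness bound via the product plan, the identity $W_p(\mu,\delta_{x_0})^p=m_p(\mu)$ combined with the triangle inequality to get convergence of the moments, the passage through $W_1\leq W_p$ and the duality formula \eqref{|x-y|3} to get weak convergence, the equivalence of the second and third conditions, and the uniform-integrability sublemma (proved by writing $\int(d^p-c)_+\,\dd\mu_n=m_p(\mu_n)-\int (d^p\wedge c)\,\dd\mu_n$ and using weak convergence on the bounded continuous truncation) are all correct, and they follow the standard route, essentially the one of Chapter 5 of \cite{OTAM} for $\Omega\subset\R^d$.

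The one genuine gap is in the reduction of the hard implication (weak convergence plus convergence of moments implies $W_p$-convergence) to the compact case: you apply the compact-space theorem on $\overline{B_R}$, but in a general metric space $X$ closed balls need not be compact (an infinite-dimensional Banach space, or an infinite set with the discrete metric, already defeat this), so the cited theorem does not apply to $\overline{B_R}$ and the step ``$W_p(\tilde\mu_n,\tilde\mu)\to 0$'' is unjustified. Your argument is therefore complete only for proper metric spaces, in particular for $X=\Omega\subset\R^d$, which is the paper's main setting, but not for the general $X$ of the statement. The standard patch: on a Polish space a weakly convergent sequence is tight (Prokhorov), so you may replace $B_R$ by $(K\cap B_R)\cup\{x_0\}$ with $K$ compact and $\sup_n\mu_n(X\setminus K)\leq\delta$; the cost of sending the discarded mass to $x_0$ is then bounded by $R^p\delta+\sup_n\int_{\{d>R\}}d(\cdot,x_0)^p\,\dd\mu_n$, which is still arbitrarily small provided $\delta$ is chosen after $R$. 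Alternatively, one can bypass truncation altogether via a Skorokhod representation combined with Vitali's convergence theorem, using exactly the uniform integrability you already established. Everything else in your write-up --- the choice of a continuity radius $R$, the explicit coupling bounding $W_p(\mu_n,\tilde\mu_n)$, the remark that the intrinsic and ambient $W_p$ coincide for measures supported in $\overline{B_R}$, and the final triangle inequality --- is fine.
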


After this short introduction to the metric space $\mathbb W_p:=(\pical_p(X),W_p)$ and its topology, we will focus on the Euclidean case, i.e. where the metric space $X$ is a domain $\Omega\subset\R^d$, and study the curves valued in $\mathbb W_p(\Omega)$ in connections with PDEs.

The main point is to identify the absolutely continuous curves in the space $\WW_p(\Omega)$ with solutions of the continuity equation $\partial_t\mu_t+\nabla\cdot(\vv_t\mu_t)=0$ with $L^p$ vector fields $\vv_t$. Moreover, we want to connect the $L^p$ norm of $\vv_t$ with the metric derivative $|\mu'|(t)$.

We recall that standard considerations from fluid mechanics tell us that the continuity equation above may be interpreted as the equation ruling the evolution of the density $\mu_t$ of a family of particles initially distributed according to $\mu_0$ and each of which follows the flow 
$$\begin{cases}y_x'(t)=\vv_t(y_x(t))\\y_x(0)=x.\end{cases}$$
The main theorem in this setting (originally proven in \cite{AmbGigSav}) relates absolutely continuous curves in $\WW_p$ with solutions of the continuity equation:
\begin{theorem}\label{curves and PDE}
Let $(\mu_t)_{t\in[0,1]}$ be an absolutely continuous curve in $\WW_p(\Omega)$ (for $p>1$ and $\Omega\subset\R^d$ an open domain). Then for a.e. $t\in [0,1]$ there exists a vector field $\vv_t\in L^p(\mu_t;\R^d)$ such that
\begin{itemize}
\item the continuity equation $\partial_t\mu_t+\nabla\cdot(\vv_t\mu_t)=0$ is satisfied in the sense of distributions,
\item for a.e. $t$ we have $||\vv_t||_{L^p(\mu_t)}\leq |\mu'|(t)$ (where $|\mu'|(t)$ denotes the metric derivative at time $t$ of the curve $t\mapsto\mu_t$, w.r.t. the distance $W_p$);
\end{itemize}

Conversely, if $(\mu_t)_{t\in[0,1]}$ is a family of measures in $\pp$ and for each $t$ we have a vector field $\vv_t\in L^p(\mu_t;\R^d)$ with $\int_0^1||\vv_t||_{L^p(\mu_t)}\,\dd t<+\infty$ solving  $\partial_t\mu_t+\nabla\cdot(\vv_t\mu_t)=0$, then $(\mu_t)_t$ is absolutely continuous in $\WW_p(\Omega)$ and for a.e. $t$ we have $ |\mu'|(t)\leq ||\vv_t||_{L^p(\mu_t)}$. \end{theorem}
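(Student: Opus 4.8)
The plan is to prove the two implications separately, starting from the easier converse (sufficiency) and then constructing the velocity field in the direct implication by a duality argument.

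For the converse, I would first reduce to smooth data by spatial mollification, replacing $\vv_t,\mu_t$ by $\vv_t^\ve,\mu_t^\ve$ that still solve a continuity equation and for which the flow map of $\vv^\ve$ is well defined. Writing $Y_{s,t}$ for the flow from time $s$ to time $t$, one has $(Y_{s,t})_\#\mu_s^\ve=\mu_t^\ve$, so $Y_{s,t}$ is an admissible (generally non-optimal) transport map and
$$W_p(\mu_s^\ve,\mu_t^\ve)\leq \|Y_{s,t}-\id\|_{L^p(\mu_s^\ve)}\leq \int_s^t \|\vv_r^\ve\|_{L^p(\mu_r^\ve)}\,\dd r,$$
the last step coming from $Y_{s,t}(x)-x=\int_s^t \vv_r^\ve(Y_{s,r}(x))\,\dd r$, the Minkowski integral inequality, and $(Y_{s,r})_\#\mu_s^\ve=\mu_r^\ve$. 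Letting $\ve\to 0$ (lower semicontinuity of $W_p$ and convergence of the mollified norms) yields $W_p(\mu_s,\mu_t)\leq \int_s^t\|\vv_r\|_{L^p(\mu_r)}\,\dd r$, which gives both absolute continuity and the bound $|\mu'|(t)\leq \|\vv_t\|_{L^p(\mu_t)}$ at every Lebesgue point of $r\mapsto\|\vv_r\|_{L^p(\mu_r)}$.

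For the direct implication, fix $\phi\in C_c^\infty(\Omega)$ and look at $t\mapsto \int\phi\,\dd\mu_t$. Since $|\int\phi\,\dd\mu_t-\int\phi\,\dd\mu_s|\leq \Lip(\phi)\,W_1(\mu_t,\mu_s)\leq \Lip(\phi)\,W_p(\mu_t,\mu_s)$ (using the ordering of the distances) and $\mu_t$ is absolutely continuous in $W_p$, this scalar function is absolutely continuous, hence differentiable a.e.; call its derivative $L_t(\phi)$. Taking a countable family $\{\phi_n\}$ dense in a suitable $C^1$ sense, I would fix a common full-measure set of times where all $L_t(\phi_n)$ and $|\mu'|(t)$ exist, and aim to prove that there
$$|L_t(\phi)|\leq |\mu'|(t)\,\|\nabla\phi\|_{L^q(\mu_t)},\qquad \tfrac1p+\tfrac1q=1.$$
Granting this, $\nabla\phi\mapsto L_t(\phi)$ is a continuous linear functional on the subspace $\{\nabla\phi\}\subset L^q(\mu_t;\R^d)$ of norm $\leq|\mu'|(t)$; by Hahn--Banach and the Riesz identification $(L^q)'=L^p$ there is $\vv_t\in L^p(\mu_t;\R^d)$, the minimal-norm representative lying in the closure of gradients, with $\|\vv_t\|_{L^p(\mu_t)}\leq|\mu'|(t)$ and $L_t(\phi)=\int\nabla\phi\cdot\vv_t\,\dd\mu_t$. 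Integrating $\frac{\dd}{\dd t}\int\phi\,\dd\mu_t=\int\nabla\phi\cdot\vv_t\,\dd\mu_t$ against a temporal test function then produces the distributional continuity equation.

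The main obstacle is exactly the displayed pointwise estimate, since it controls $L_t(\phi)$ by the $\mu_t$-weighted $L^q$ norm of $\nabla\phi$ rather than by the crude $\Lip(\phi)$. To obtain it I would, for each $h>0$, choose an optimal plan $\gamma_h\in\Pi(\mu_t,\mu_{t+h})$ for the cost $|x-y|^p$, write $\phi(y)-\phi(x)=\int_0^1\nabla\phi(x+s(y-x))\cdot(y-x)\,\dd s$, and apply H\"older to get
$$\left|\int\phi\,\dd\mu_{t+h}-\int\phi\,\dd\mu_t\right|\leq \left(\int\!\!\int_0^1|\nabla\phi(x+s(y-x))|^q\,\dd s\,\dd\gamma_h\right)^{1/q} W_p(\mu_t,\mu_{t+h}).$$
Dividing by $h$ and letting $h\to 0$, the factor $W_p(\mu_t,\mu_{t+h})/h\to|\mu'|(t)$; the delicate point is that the first factor converges to $\|\nabla\phi\|_{L^q(\mu_t)}$. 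This requires showing that the optimal plans $\gamma_h$ concentrate on the diagonal as $h\to0$ (so the interpolated points $x+s(y-x)$ collapse onto $x$), combined with a uniform integrability argument exploiting $\nabla\phi\in C_c$; this is where the real work lies. The symmetric estimate with $t$ and $t+h$ interchanged controls the left derivative, and running $\phi$ through the countable dense family establishes the bound simultaneously for a.e.\ $t$.
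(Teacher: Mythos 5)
The paper itself does not prove Theorem \ref{curves and PDE}: it explicitly defers to \cite{AmbGigSav} (and to Chapter 5 of \cite{OTAM}) and only offers the two-time-step heuristic that follows the statement, so there is no in-paper proof to compare against. Your outline is, in substance, the standard argument of \cite{AmbGigSav}, Theorem 8.3.1, and it is correct. The ``delicate point'' you isolate in the direct implication is in fact the easiest part to close: since $\int|x-y|^p\,\dd\gamma_h=W_p^p(\mu_t,\mu_{t+h})\to0$ and the marginals of $\gamma_h$ converge narrowly, the family $(\gamma_h)_h$ is tight and every narrow limit point is a plan from $\mu_t$ to itself concentrated on $\{x=y\}$, hence equals $(\id,\id)_\#\mu_t$; as $(x,y,s)\mapsto|\nabla\phi(x+s(y-x))|^q$ is bounded and continuous for $\phi\in C_c^\infty$ extended by zero to $\R^d$, the first factor converges to $\|\nabla\phi\|_{L^q(\mu_t)}$ with no uniform integrability needed. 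The two places where your sketch actually leaves work undone are elsewhere: (i) in the converse, the mollified field $\vv^\ve_t=E^\ve_t/\mu^\ve_t$ is smooth but not a priori bounded or globally Lipschitz, so the global existence of the flow $Y_{s,t}$ must be justified (this is handled in \cite{AmbGigSav} via a Gaussian mollifier together with the Jensen-type bound $\|\vv^\ve_t\|_{L^p(\mu^\ve_t)}\leq\|\vv_t\|_{L^p(\mu_t)}$, which follows from the joint convexity of $(E,\varrho)\mapsto|E|^p/\varrho^{p-1}$ and also supplies the convergence of norms you invoke when letting $\ve\to0$); (ii) in the direct part, one must check that $t\mapsto\vv_t$ admits a measurable selection so that the space-time distributional continuity equation makes sense, e.g.\ by realizing $\vv_t$ as an $L^p(\mu_t)$-limit of a measurably chosen sequence of gradients or by working directly with the vector measure $\vv_t\mu_t\otimes\dd t$. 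Neither point affects the architecture of your argument.
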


Note that, as a consequence of the second part of the statement, the vector field $\vv_t$ introduced in the first part must a posteriori satisfy $||\vv_t||_{L^p(\mu_t)}= |\mu'|(t)$.

We will not give the proof of this theorem, which is quite involved. The main reference is \cite{AmbGigSav} (but the reader can also find alternative proofs in Chapter 5 of \cite{OTAM}, in the case where $\Omega$ is compact). 
Yet, if the reader wants an idea of the reason for this theorem to be true, it is possible to start from the case of two time steps: there are two measures $\mu_t$ and $\mu_{t+h}$ and there are several ways for moving the particles so as to reconstruct the latter from the former. It is exactly as when we look for a transport. One of these transports is optimal in the sense that it minimizes $\int |T(x)-x|^p\dd\mu_t(x)$ and the value of this integral equals $W_p^p(\mu_t,\mu_{t+h})$. If we call $\vv_t(x)$ the ``discrete velocity of the particle located at $x$ at time $t$, i.e. $\vv_t(x)=(T(x)-x)/h$, one has $||\vv_t||_{L^p(\mu_t)}=\frac 1 h W_p(\mu_t,\mu_{t+h})$.  We can easily guess that, at least formally, the result of the previous theorem can be obtained as a limit as $h\to 0$.

Once we know about curves in their generality, it is interesting to think about geodesics. The following result is a characterization of geodesics in $W_p(\Omega)$ when $\Omega$ is a convex domain in $\R^d$. This procedure is also known as {\it McCann's displacement interpolation}.

\begin{theorem}\label{geodesics in Wp}
If $\Omega\subset\R^d$ is convex, then all the spaces $\WW_p(\Omega)$ are length spaces and if $\mu$ and $\nu$ belong to $\WW_p(\Omega)$, and $\gamma$ is an optimal transport plan from $\mu$ to $\nu$ for the cost $c_p(x,y)=|x-y|^p$, then the curve
$$\mu^{\gamma}(t)=(\pi_t)_{\#}\gamma$$
where $\pi_t:\Omega\times \Omega\to\Omega$ is given by $\pi_t(x,y)=(1-t)x+ty$, is a constant-speed geodesic  from $\mu$ to $\nu$. In the case $p>1$ all the constant-speed geodesics are of this form, and, if $\mu$ is absolutely continuous, then there is only one geodesic and it has the form
$$
\mu_t= [T_t]_{\#} \mu,\qquad\mbox{where }T_t:=(1 - t)id + t T
$$
where $T$ is the optimal transport map from $\mu$ to $\nu$. In this case, the velocity field $\vv_t$ of the geodesic $\mu_t$ is given by $\vv_t=(T-id)\circ(T_t)^{-1}$. In particular, for $t=0$ we have $\vv_0=-\nabla\varphi$ and for $t=1$ we have $\vv_1=\nabla\psi$, where $\varphi$ is the Kantorovich potential in the transport from $\mu$ to $\nu$ and $\psi=\varphi^c$.
\end{theorem}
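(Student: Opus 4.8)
The plan is to prove the geodesic property by pairing an explicit upper bound on Wasserstein distances along the interpolation with the triangle inequality, then to obtain the converse representation for $p>1$ via a midpoint/gluing argument that exploits strict convexity, and finally to specialize to absolutely continuous $\mu$ using the Brenier-type uniqueness quoted after Theorem \ref{maintransport}.

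First I would show that $\mu^\gamma(t)=(\pi_t)_\#\gamma$ is a constant-speed geodesic. For $s<t$ in $[0,1]$ the map $(\pi_s,\pi_t):\Omega\times\Omega\to\Omega\times\Omega$ pushes $\gamma$ forward to an admissible plan between $\mu^\gamma(s)$ and $\mu^\gamma(t)$, whose $p$-cost is
$$\int |\pi_s(x,y)-\pi_t(x,y)|^p\,\dd\gamma = |t-s|^p\int|y-x|^p\,\dd\gamma = |t-s|^p\,W_p^p(\mu,\nu),$$
using optimality of $\gamma$. Hence $W_p(\mu^\gamma(s),\mu^\gamma(t))\leq |t-s|\,W_p(\mu,\nu)$. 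Summing this over any partition $0=t_0<\dots<t_n=1$ gives $\sum_i W_p(\mu^\gamma(t_i),\mu^\gamma(t_{i+1}))\le W_p(\mu,\nu)$, while the triangle inequality gives the reverse bound; therefore all intermediate inequalities are equalities and the constant-speed condition $W_p(\mu^\gamma(s),\mu^\gamma(t))=|t-s|\,W_p(\mu^\gamma(0),\mu^\gamma(1))$ holds. Since optimal plans always exist, this exhibits a geodesic between arbitrary $\mu,\nu$, so every $\WW_p(\Omega)$ is a geodesic, hence length, space.

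For the converse when $p>1$, I would take an arbitrary constant-speed geodesic $(\mu_t)$ joining $\mu$ and $\nu$ and first analyze its midpoint $\mu_{1/2}$, which satisfies $W_p(\mu,\mu_{1/2})=W_p(\mu_{1/2},\nu)=\tfrac12 W_p(\mu,\nu)$. Choosing optimal plans $\gamma_1\in\Pi(\mu,\mu_{1/2})$ and $\gamma_2\in\Pi(\mu_{1/2},\nu)$ and gluing them along their common marginal $\mu_{1/2}$ produces a measure $\sigma$ on $\Omega^3$; set $\gamma=(\pi_1,\pi_3)_\#\sigma\in\Pi(\mu,\nu)$. Writing points of $\Omega^3$ as $(x,y,z)$, the pointwise triangle inequality together with Minkowski's inequality gives
$$\left(\int|z-x|^p\,\dd\sigma\right)^{1/p}\le\left(\int|y-x|^p\,\dd\sigma\right)^{1/p}+\left(\int|z-y|^p\,\dd\sigma\right)^{1/p}=W_p(\mu,\mu_{1/2})+W_p(\mu_{1/2},\nu)=W_p(\mu,\nu).$$
Since $\gamma\in\Pi(\mu,\nu)$, the leftmost quantity is also $\ge W_p(\mu,\nu)$, so $\gamma$ is optimal and every inequality is tight. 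Tightness of the pointwise triangle inequality forces $y$ onto the segment $[x,z]$, while equality in Minkowski (where $p>1$ is essential, through strict convexity of $t\mapsto t^p$) forces $|z-y|$ and $|y-x|$ to be proportional $\sigma$-a.e.; combined with the equal half-distances this yields $y=\tfrac12(x+z)$, i.e. $\mu_{1/2}=(\pi_{1/2})_\#\gamma$. Iterating over dyadic times and passing to the limit by continuity of $t\mapsto\mu_t$ gives $\mu_t=(\pi_t)_\#\gamma$ for all $t$. I expect this midpoint/gluing step, and in particular the rigorous handling of the Minkowski equality case, to be the main obstacle.

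When $\mu$ is absolutely continuous, the optimal plan for the cost $|x-y|^p$ with $p>1$ is unique and induced by a map $T$, so $\gamma=(\id,T)_\#\mu$ and $\mu_t=(T_t)_\#\mu$ with $T_t=(1-t)\id+tT$; uniqueness of $\gamma$ yields uniqueness of the geodesic. For the velocity field I would differentiate in the Lagrangian picture: each particle travels along $t\mapsto T_t(x)=x+t(T(x)-x)$ with constant velocity $T(x)-x$, so for any test $\phi$ one has $\frac{d}{dt}\int\phi\,\dd\mu_t=\int\nabla\phi(T_t(x))\cdot(T(x)-x)\,\dd\mu(x)$, which is the weak continuity equation with $\vv_t(\xi)=(T-\id)\big((T_t)^{-1}(\xi)\big)$; here $T_t$ is injective $\mu$-a.e. for $t<1$ because optimality prevents transport segments from crossing before time one (in the quadratic case $T_t=\nabla[(1-t)\tfrac{|\cdot|^2}{2}+tu]$ is even the gradient of a strictly convex function). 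Finally, in the quadratic case Theorem \ref{maintransport} gives $T=\id-\nabla\varphi$, so at $t=0$ (where $T_0=\id$) we get $\vv_0=T-\id=-\nabla\varphi$; at $t=1$ the velocity at $\xi=T(x)$ equals $\xi-T^{-1}(\xi)$, and since $T^{-1}$ is the optimal map from $\nu$ to $\mu$ with potential $\psi=\varphi^c$, i.e. $T^{-1}=\id-\nabla\psi$, we conclude $\vv_1=\nabla\psi$.
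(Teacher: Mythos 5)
The paper does not actually prove this theorem: it is quoted as known (McCann's displacement interpolation) with a pointer to \cite{AmbGigSav} and \cite{OTAM}. Your strategy is the standard one from those references, and most of it is sound: the forward direction (upper bound via $(\pi_s,\pi_t)_\#\gamma$ plus the triangle inequality over a partition) is complete and correct, and the treatment of the absolutely continuous case, the Lagrangian derivation of $\vv_t=(T-\id)\circ T_t^{-1}$, and the identification $\vv_0=-\nabla\varphi$, $\vv_1=\nabla\psi$ are all fine (with the caveat that the last identities, like the theorem's own statement, really pertain to the quadratic cost where $T=\id-\nabla\varphi$).

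The one genuine gap is in the converse step. Your midpoint/gluing argument correctly produces, for the time $t=1/2$ (and, with weights $t$, $1-t$, for any fixed $t$), \emph{some} optimal plan $\gamma^{(t)}\in\Pi(\mu,\nu)$ with $\mu_t=(\pi_t)_\#\gamma^{(t)}$. But the theorem asserts a \emph{single} plan $\gamma$ representing $\mu_t$ for \emph{all} $t$, and ``iterating over dyadic times'' does not deliver this as written: when you apply the midpoint argument on $[0,1/2]$ you obtain an optimal plan in $\Pi(\mu,\mu_{1/2})$ representing $\mu_{1/4}$, but without uniqueness of optimal plans there is no reason it coincides with the plan $(\pi_1,\tfrac{\pi_1+\pi_3}{2})_\#\sigma$ induced by your first-level construction, so the two levels need not be consistent. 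The standard repair is a nestedness/compactness argument: let $\Gamma_n$ be the (nonempty, by gluing all $2^n$ consecutive optimal plans at level $n$ and running your equality analysis on each triple; weakly compact; nested) set of optimal plans $\gamma$ with $(\pi_{k2^{-n}})_\#\gamma=\mu_{k2^{-n}}$ for all $k$; any element of $\bigcap_n\Gamma_n$ then works for all dyadic $t$, hence for all $t$ by continuity. (Of course, when $\mu\ll\lcal^d$ and $p>1$ the optimal plan is unique and the issue disappears, which is why your final paragraph is unaffected.) A smaller loose end: the $\mu$-a.e.\ injectivity of $T_t$ for $t<1$ is only justified in your argument for $p=2$; for general $p>1$ the ``intermediate transport rays do not cross'' fact is true but requires the cyclical-monotonicity argument you only allude to.
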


The above theorem may be adapted to the case where the Euclidean domain $\Omega$ is replaced by a Riemannian manifold, and in this case the map $T_t$ must be defined by using geodesics instead of segments: the point $T_t(x)$ will be the value at time $t$ of a constant-speed geodesic, parametrized on the interval $[0,1]$, connecting $x$ to $T(x)$. For the theory of optimal transport on manifolds, we refer to \cite{McC manif}.

Using the characterization of constant-speed geodesics as minimizers of a strictly convex kinetic energy, we can also infer the following interesting information.

\begin{itemize}
\item Looking for an optimal transport for the cost $c(x,y)=|x-y|^p$ is equivalent to looking for constant-speed geodesic in $\WW_p$ because from optimal plans we can reconstruct geodesics and from geodesics (via their velocity field) it is possible to reconstruct the optimal transport;
\item constant-speed geodesics may be found by minimizing $\int_0^1|\mu'|(t)^p\dd t$ ;\index{geodesics!in $\WW_p$}
\item in the case of the Wasserstein spaces, we have $|\mu'|(t)^p=\int_\Omega |\vv_t|^p\,\dd\mu_t$, where $\vv$ is a velocity field solving the continuity equation together with $\mu$ (this field is not unique, but the metric derivative $|\mu'|(t)$ equals the minimal value of the $L^p$ norm of all possible fields).
\end{itemize}

As a consequence of these last considerations, for $p>1$, solving the kinetic energy minimization problem\index{kinetic energy}
$$\min\left\{\int_0^1\int_\Omega |\vv_t|^p\dd\varrho_t\,\dd t\;:\;\partial_t\varrho_t+\nabla\cdot(\vv_t\varrho_t)=0,\;\varrho_0=\mu,\,\varrho_1=\nu\right\}$$
selects constant-speed geodesics connecting $\mu$ to $\nu$, and hence allows to find the optimal transport between these two measures.
This is what is usually called {\it Benamou-Brenier formula} (\cite{BenBre}).

On the other hand, this minimization problem in the variables $(\varrho_t,\vv_t)$ has non-linear constraints (due to the product $\vv_t\varrho_t$) and the functional is non-convex (since $(t,x)\mapsto t|x|^p$ is not convex). Yet, it is possible to transform it into a convex problem.
For this, it is sufficient to switch variables, from $(\varrho_t,\vv_t)$ into $(\varrho_t,E_t)$ where $E_t=\vv_t\varrho_t$, thus obtaining the following minimization problem
\begin{equation}\label{BBp}
\min\left\{\int_0^1\int_\Omega \frac{|E_t|^p}{\varrho_t^{p-1}}\,\dd x\dd t\;:\;\partial_t\varrho_t+\nabla\cdot E_t=0,\;\varrho_0=\mu,\,\varrho_1=\nu\right\}.
\end{equation}
We need to use the properties of the function $f_p:\R\times\R^d\to\R\cup\{+\infty\}$, defined through 
$$
f_p(t,x):=\sup_{(a,b)\in K_q}\,(at+b\cdot x)=
\begin{cases}\frac 1p\frac{|x|^p}{t^{p-1}}&\mbox{ if }t>0,\\
			0&\mbox{ if }t=0,x=0\\
			+\infty&\mbox{ if }t=0,x\neq 0,\;\mbox{ or }t<0,\end{cases}
			$$
where $K_q:=\{(a,b)\in\R\times\R^d\,:\,a+\frac 1q |b|^q\leq 0\}$ and $q=p/(p-1)$ is the conjugate exponent of $p$. In particular, $f_p$ is convex, which makes the above minimization problem convex, and also allows to write what we formally wrote as $\int_0^1\int_\Omega \frac{|E_t|^p}{\varrho_t^{p-1}}\,\dd x\dd t$ (an expression which implicitly assumes $\varrho_t,E_t\ll\lcal^d$) into the form 

$$\mathcal B_p(\varrho,E):=\sup\left\{\int a\,\dd\varrho+\int b\cdot\dd E\;:\;(a,b)\in C(\Omega\times[0,1];K_q)\right\}.$$
Both the convexity and this last expression will be useful for numerical methods (as it was first done in \cite{BenBre}).

\subsection{Minimizing movement schemes in the Wasserstein space and evolution PDEs}

Thanks to all the theory which has been described so far, it is natural to study gradient flows in the space $\mathbb W_2(\Omega)$ (the reason for choosing the exponent $p=2$ will be clear in a while) and to connect them to PDEs of the form of a continuity equation. The most convenient way to study this is to start from the time-discretized problem, i.e. to consider a sequence of iterated minimization problems:
\begin{equation}\label{FW2}
\varrho^\tau_{k+1}\in\argmin_\varrho F(\varrho)+\frac{W_2^2(\varrho,\varrho^{\tau}_k)}{2\tau}.
\end{equation}
Note that we denote now the measures on $\Omega$ by the letter $\varrho$ instead of $\mu$ or $\nu$ because we expect them to be absolutely continuous measures with nice (smooth) densities, and we want to study the PDE they solve. The reason to focus on the case $p=2$ can also be easily understood. Indeed, from the very beginning, i.e. from Section \ref{2}, we saw that the equation $x'=-\nabla F(x)$ corresponds to a sequence of minimization problems with the squared distance $|x-x^\tau_k|^2$ (if we change the exponent here we can consider
$$\min_x\; F(x)+\frac 1p \cdot\frac{|x-x^\tau_k|^p}{\tau^{p-1}},$$
but this gives raise to the equation $x'=-|\nabla F(x)|^{q-2}\nabla F(x),$ where $q=p/(p-1)$ is the conjugate exponent of $p$), and in the Wasserstein space $\mathbb W_p$ the distance is defined as the power $1/p$ of a transport cost; only in the case $p=2$ the exponent goes away and we are lead to a minimization problem involving $F(\varrho)$ and a transport cost of the form
$$\mathcal T_c(\varrho,\nu):=\min\left\{\int c(x,y)\,\dd\gamma\,:\,\gamma\in\Pi(\varrho,\nu)\right\},$$
for $\nu=\varrho^\tau_k$.

In the particular case of the space $\mathbb W_2(\Omega)$, which has some additional structure, if compared to arbitrary metric spaces, we would like to give a PDE description of the curves that we obtain as gradient flows, and this will pass through the optimality conditions of the minimization problem \eqref{FW2}. In order to study these optimality conditions, we introduce the notion of first variation of a functional. This will be done in a very sketchy and formal way (we refer to Chapter 7 in \cite{OTAM} for more details).

Given a functional $G:\pical(\Omega)\to\R$ we call $\frac{\delta G}{\delta\varrho}(\varrho)$, if it exists, the unique (up to additive constants) function such that $\frac{d}{d\ve}G(\varrho+\ve\chi)_{|\ve=0}=\int \frac{\delta G}{\delta\varrho}(\varrho) d\chi$ for every perturbation $\chi$ such that, at least for $\ve\in[0,\ve_0],$ the measure $\varrho+\ve\chi$ belongs to $\pical(\Omega)$. The function $\frac{\delta G}{\delta\varrho}(\varrho)$ is called {\it first variation} of the functional $G$ at $\varrho$. In order to understand this notion, the easiest possibility is to analyze some examples.

The three main classes of examples are the following functionals\footnote{Note that in some cases the functionals that we use are actually valued in $\R\cup\{+\infty\}$, and we restrict to a suitable class of perturbations $\chi$ which make the corresponding functional finite.}
$$\mathcal F(\varrho)=\int f(\varrho(x))\dd x, \quad \mathcal V(\varrho)=\int V(x)\dd\varrho,\quad \mathcal W(\varrho)=\frac12\int\!\! \int \! W(x-y)\dd\varrho(x)\dd\varrho(y),$$
where $f:\R\to\R$ is a convex superlinear function (and the functional $F$ is set to $+\infty$ if $\varrho$ is not absolutely continuous w.r.t. the Lebesgue measure) and $V:\Omega\to\R$ and $W:\R^d\to\R$ are regular enough (and $W$ is taken symmetric, i.e. $W(z)=W(-z)$, for simplicity). In this case it is quite easy to realize that we have 
$$\frac{\delta \mathcal F}{\delta \varrho}(\varrho)=f'(\varrho), \quad \frac{\delta \mathcal V}{\delta \varrho}(\varrho)=V,\quad \frac{\delta \mathcal W}{\delta \varrho}(\varrho)= W\!*\!\varrho.$$ 

It is clear that the first variation of a functional is a crucial tool to write optimality conditions for variational problems involving such a functional. In order to study the problem \eqref{FW2}, we need to complete the picture by undestanding the first variation of functionals of the form $\varrho\mapsto \mathcal T_c(\varrho,\nu)$. The result is the following:

\begin{proposition}\label{conv_subdiff_first var}\index{Kantorovich potentials} 
Let $c:\Omega\times\Omega\to\R$ be a continuous cost function. Then the functional $\varrho\mapsto\Wc(\varrho,\nu)$ is convex, and its subdifferential at $\varrho_0$ coincides with the set of Kantorovich potentials $\{\varphi\in C^0(\Omega)\,:\,\int\varphi\,\,\dd\varrho_0+\int\varphi^c\,\,\dd\nu=\Wc(\varrho,\nu)\}$. Moreover, if there is a unique $c$-concave Kantorovich potential $\varphi$ from $\varrho_0$ to $\nu$ up to additive constants, then we also have $\frac{\delta \Wc(\cdot,\nu)}{\delta\varrho}(\varrho_0)=\varphi$. \end{proposition}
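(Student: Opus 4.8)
The plan is to derive everything from the Kantorovich duality formula \eqref{duality formula}, which for fixed $\nu$ reads $\Wc(\varrho,\nu)=\sup_{\phi}\big(\int_\Omega\phi\,\dd\varrho+\int_\Omega\phi^c\,\dd\nu\big)$, the supremum being over $c$-concave functions $\phi$. First I would observe that, for fixed $\nu$, the map $\varrho\mapsto\int_\Omega\phi\,\dd\varrho+\int_\Omega\phi^c\,\dd\nu$ is affine in $\varrho$ (the first term is linear in $\varrho$ and the second is a constant). Since $\Wc(\cdot,\nu)$ is thus a supremum of affine functionals of $\varrho$, it is convex; this settles the first assertion.

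For the subdifferential, recall that $\varphi\in C^0(\Omega)$ belongs to the subdifferential of $\Wc(\cdot,\nu)$ at $\varrho_0$ when $\Wc(\varrho,\nu)\geq\Wc(\varrho_0,\nu)+\int_\Omega\varphi\,\dd(\varrho-\varrho_0)$ for every $\varrho\in\pical(\Omega)$. The easy inclusion is that any Kantorovich potential is a subgradient: by duality $\Wc(\varrho,\nu)\geq\int_\Omega\varphi\,\dd\varrho+\int_\Omega\varphi^c\,\dd\nu$ for every $\varrho$ (as $(\varphi,\varphi^c)$ is an admissible pair in the dual problem), while the defining equality of a potential gives $\Wc(\varrho_0,\nu)=\int_\Omega\varphi\,\dd\varrho_0+\int_\Omega\varphi^c\,\dd\nu$; subtracting yields exactly the subgradient inequality. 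For the converse inclusion I would compute the Legendre--Fenchel conjugate $G^*$ of $G:=\Wc(\cdot,\nu)$. Writing $G(\varrho)=\min_{\gamma\in\Pi(\varrho,\nu)}\int c\,\dd\gamma$ and letting $\varrho$ vary freely, one gets $G^*(\varphi)=\sup_\varrho\{\int\varphi\,\dd\varrho-G(\varrho)\}=\sup\{\int(\varphi(x)-c(x,y))\,\dd\gamma:\ (\pi_1)_\#\gamma=\nu\}$, and optimizing the $x$-conditional of $\gamma$ for each fixed $y$ produces $\sup_x(\varphi(x)-c(x,y))=-\varphi^c(y)$, so that $G^*(\varphi)=-\int_\Omega\varphi^c\,\dd\nu$. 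Then $\varphi\in\partial G(\varrho_0)$ is equivalent to the Fenchel equality $G(\varrho_0)+G^*(\varphi)=\int_\Omega\varphi\,\dd\varrho_0$, i.e. to $\int_\Omega\varphi\,\dd\varrho_0+\int_\Omega\varphi^c\,\dd\nu=\Wc(\varrho_0,\nu)$, which is precisely the condition defining a Kantorovich potential. This identifies the subdifferential with the announced set.

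Finally, for the first variation, I would use that a convex functional has directional derivative $G'(\varrho_0;\chi)=\max_{p\in\partial G(\varrho_0)}\int p\,\dd\chi$ along admissible perturbations $\chi$ (those with $\int\dd\chi=0$ keeping $\varrho_0+\ve\chi$ in $\pical(\Omega)$ for small $\ve$). Under the assumption that the $c$-concave potential $\varphi$ is unique up to additive constants, the subdifferential reduces, as a family of linear functionals acting on zero-mean $\chi$ on which additive constants are invisible, to the single functional $\chi\mapsto\int\varphi\,\dd\chi$. Hence the directional derivative is linear in $\chi$, $G$ is differentiable, and by the very definition of first variation we conclude $\frac{\delta\Wc(\cdot,\nu)}{\delta\varrho}(\varrho_0)=\varphi$ (consistently determined only up to an additive constant).

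The step I expect to be the main obstacle is making the computation of $G^*$ fully rigorous: the free optimization over $\varrho$ must be recast as an optimization over plans with prescribed second marginal $\nu$, the interchange of the two suprema justified, and the pointwise reduction $\sup_x(\varphi(x)-c(x,y))=-\varphi^c(y)$ carried out with the needed measurability and attainment (guaranteed here by continuity of $c$ and $\varphi$ together with compactness of $\Omega$). A secondary technical point is the care required in handling subdifferentials of a convex functional defined on the convex set $\pical(\Omega)$, which is not a vector space, and in passing from \emph{unique subgradient (modulo constants)} to genuine differentiability of $G$ along admissible directions.
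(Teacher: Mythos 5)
Your proposal is correct and follows essentially the same route as the paper, which itself only sketches the argument (convexity from the duality formula as a supremum of affine functionals of $\varrho$, identification of the subdifferential by ``standard considerations from convex analysis'', and selection of the optimal potential for the first variation, deferring full details to Chapter 7 of \cite{OTAM}). Your explicit computation of the Legendre transform $G^*(\varphi)=-\int\varphi^c\,\dd\nu$ and the use of the Fenchel equality is a clean and correct way of making the paper's convex-analysis step precise, and your closing remarks correctly identify the technical points (attainment/measurability in the reduction $\sup_x(\varphi(x)-c(x,y))=-\varphi^c(y)$, and the fact that $\pical(\Omega)$ is not a vector space) that the full proof must address.
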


Even if a complete proof of the above proposition is not totally trivial (and Chapter 7 in \cite{OTAM} only provides it in the case where $\Omega$ is compact), one can guess why this is true from the following considerations. Start from Propositon \ref{prop-duality formula}, which provides
$$\Wc(\varrho,\nu)=\max_{\phi\in\Psi_c(X)}\;\int_{\Omega}\phi\,\dd\varrho+\int_{\Omega}\phi^c\,\,\dd \nu.$$
This expresses $\Wc$ as a supremum of linear functionals in $\varrho$ and shows convexity. Standard considerations from convex analysis allow to identify the subdifferential as the set of functions $\varphi$ attaining the maximum. An alternative point of view is to consider the functional $\varrho\mapsto \int\phi\,\dd\varrho+\int\phi^c\,\,\dd \nu$ for fixed $\phi$, in which case the first variation is of course $\phi$; then it is easy to see that the first variation of the supremum may be obtained (in case of uniqueness) just by selecting the optimal $\phi$.

Once we know how to compute first variations, we come back to the optimality conditions for the minimization problem \eqref{FW2}. Which are these optimality conditions? roughly speaking, we should have 
$$\frac{\delta F}{\delta \varrho}(\varrho^\tau_{k+1})+\frac{\varphi}{\tau}=const$$ 
(where the reasons for having a constant instead of $0$ is the fact that, in the space of probability measures, only zero-mean densities are considered as admissible perturbations, and the first variations are always defined up to additive constants). Note that here $\varphi$ is the Kantorovich potential associated with the transport from $\varrho_{k+1}^\tau$ to $\varrho_k^\tau$ (and not viceversa).

More precise statements and proofs of this optimality conditions will be presented in the next section. Here we look at the consequences we can get. Actually, if we combine the fact that the above sum is constant, and that we have $T(x)=x-\nabla\varphi(x)$ for the optimal $T$, we get
\begin{equation}\label{eq grad}
-\vv(x):=\frac{T(x)-x}{\tau}=-\frac{\nabla\varphi(x)}{\tau}=\nabla\big(\frac{\delta F}{\delta \varrho}(\varrho)\big)(x).
\end{equation}
We will denote by $-\vv$ the ratio $\frac{T(x)-x}{\tau}$. Why? because, as a ratio between a displacement and a time step, it has the meaning of a velocity, but since it is the displacement associated to the transport from $\varrho^\tau_{k+1}$ to $\varrho^\tau_{k}$, it is better to view it rather as a backward velocity (which justifies the minus sign).

Since here we have $\vv=-\nabla\big(\frac{\delta F}{\delta \varrho}(\varrho))$, this suggests that at the limit $\tau\to 0$ we will find a solution of 
\begin{equation}\label{PDEgradflow}
\partial_t\varrho-\nabla\cdot\left(\varrho\,\nabla\left(\frac{\delta F}{\delta \varrho}(\varrho)\right)\right)=0.
\end{equation}

This is a PDE where the velocity field in the continuity equation depends on the density $\varrho$ itself. 
An interesting example is the case where we use $F(\varrho)=\int f(\varrho(x))\dd x,$ with $f(t)=t\log t$. In such a case we have $f'(t)=\log t +1$ and $\nabla(f'(\varrho))=\frac{\nabla \varrho}{\varrho}$: this means that the gradient flow equation associated to the functional $F$ would be the {\it Heat Equation} $\partial_\varrho-\Delta\varrho=0$. Using $F(\varrho)=\int f(\varrho(x))\dd x+\int V(x)\dd\varrho(x)$, we would have the {\it Fokker-Planck Equation} $\partial_\varrho-\Delta\varrho-\nabla\cdot(\varrho\nabla V)=0$. We will see later which other interesting PDEs can be obtained in this way.

Many possible proofs can be built for the convergence of the above iterated minimization scheme. In particular, one could follow the general theory developed in \cite{AmbGigSav}, i.e. checking all the assumptions to prove existence and uniqueness of an EVI gradient flow for the functional $F$ in the space $\mathbb W_2(\Omega)$, and then characterizing the velocity field that Theorem \ref{curves and PDE} associates with the curve obtained as a gradient flow. In \cite{AmbGigSav}, it is proven, under suitable conditions, that such a vector field $\vv_t$ must belong to what is defined as the {\it Wasserstein sub-differential} of the functional $F$, provided in particular that $F$ is $\lambda$-convex. Then, the {\it Wasserstein sub-differential}  is proven to be of the desired form (i.e. composed only of the gradient of the first variation of $F$, when $F$ admits a first variation). 

This approach has the advantage to use a general theory and to adapt it to the scopes of this particular setting. On the other hand, some considerations seem necessary:

\begin{itemize}
\item the important point when studying these PDEs is that the curves $(\varrho_t)_t$ obtained as a limit are true weak solutions of the continuity equations; from this point of view, the notions of EDE and EVI solutions and the formalism developed in the first part of the book \cite{AmbGigSav} (devoted to the general metric case) are not necessary; if the second part of \cite{AmbGigSav} is exactly concerned with Wasserstein spaces and with the caracterization of the limit as $\tau\to 0$ as the solution of a PDE, we can say that the whole formalism is sometimes too heavy.
\item after using optimal transport thery to select a suitable distance in the discrete scheme above and a suitable interpolation, the passage to the limit can be done by classical compactness techniques in the space of measures and in functional analysis; on the other hand, there are often some difficulties in handling some non-linear terms, which are not always seen when using the theory of \cite{AmbGigSav} (which is an advantage of this general theory).
\item the $\lambda$-convexity assumption is in general not crucial in what concerns existence (but the general theory in \cite{AmbGigSav} has been built under this assumption, as we saw in Section \ref{metricth}).
\item as far as uniqueness of the solution is concerned, the natural point of view in the applications would be to prove uniqueness of the weak solution of the equation (or, possibly, to define a more restrictive notion of solution of the PDE for which one can prove uniqueness), and this is a priori very different from the EDE or EVI notions. To this aim, the use of the Wasserstein distance can be very useful, as one can often prove uniqueness by differentiating in time the distance between two solutions, and maybe apply a Gronwall lemma (and this can be done independently of the EVI notion; see for instance the end of section \ref{geodconvW2}). On the other hand, uniqueness results are almost never possible without some kind of $\lambda$-convexity (or weaker versions of it, as in \cite{KatyCraig}) of the functional.
\end{itemize}

For the reader who wants an idea of how to prove convergence of the scheme to a solution of the PDE independently of the EDE/EVI theory, here are some sketchy ideas. Everything will be developed in details in Section \ref{secFP} in a particular case.

The main point lies in the interpolation of the sequence $\varrho^\tau_k$ (and of the corresponding velocities $\vv^\tau_k$). Indeed, two possible  interpolations turn out to be useful:  on the one hand, we can define an interpolation $(\varrho^\tau,\vv^\tau)$ which is piecewise constant, as in  \eqref{interp const} (to define  $\vv^\tau$ we use $\nabla\varphi/\tau$); on the other hand, we can connect each measure $\varrho^\tau_k$ to $\varrho^\tau_{k+1}$ by using a piecewise geodesic curve $\tilde\varrho^\tau$, where geodesics are chosen according to the Wasserstein metric, using Theorem \eqref{geodesics in Wp} to get an explicit expression. This second interpolation is a Lipschitz curve in $\mathbb W_2(\Omega)$, and has an explicit velocity field, that we know thanks to Theorem \eqref{geodesics in Wp}: we call it $\tilde \vv^\tau $ and it is related to $\vv^\tau$. The advantage of the second interpolation is that $(\tilde\varrho^\tau,\tilde \vv^\tau)$ satisfies the continuity equation. On the contrary, the first interpolation is not continuous, the continuity equation is not satisfied, but the optimality conditions at each time step provide a connection between $\vv^\tau$ and $\varrho^\tau$ ($\vv^\tau=-\nabla\big(\frac{\delta F}{\delta \varrho}(\varrho^\tau))$),which is not satisfied with $\tilde\varrho^\tau$ and $\tilde \vv^\tau$. It is possible to prove that the two interpolations converge to the same limit as $\tau\to 0$, and that the limit will satisfy a continuity equation with a velocity vector field $\vv=-\nabla\big(\frac{\delta F}{\delta \varrho}(\varrho))$, which allows to conclude.

\subsection{Geodesic convexity in $\WW_2$}\label{geodconvW2}

Even if we insisted on the fact that the most natural approach to gradient flows in $\mathbb W_2$ relies on the notion of weak solutions to some PDEs and not on the EDE/EVI formulations, for sure it could be interesting to check whether the general theory of Section \ref{metricth} could be applied to some model functionals on the space of probabilities, such as $\mathcal F$, $\mathcal V$ or $\mathcal W$. This requires to discuss their $\lambda$-convexity, which is also useful because it can provide uniqueness results. As we now know the geodesics in the Wasserstein space, the question of which functionals are geodesically convex is easy to tackle. The notion of geodesic convexity in the $\mathbb W_2$ space, aslo called {\it displacement convexity}, has first been introduced by McCann in \cite{MC}.

\paragraph{{\bf Displacement convexity of $\mathcal F$, $\mathcal V$ and $\mathcal W$.}}

It is not difficult to check that the convexity of $V$ is enough to guarantee geodesic convexity of $\mathcal V$, since
$$\mathcal V(\mu_t)=\int V\,\dd\big((1-t)id+tT\big)_\#\mu=\int V\big((1-t)x+tT(x)\big)\dd\mu,$$
as well as the convexity of $W$ guarantees that of $\mathcal W$:
\begin{eqnarray*}
\mathcal W(\mu_t)&=&\int W(x-y)\,\dd\left(\big((1-t)id+tT\big)_\#\mu\big)\otimes\big(\big((1-t)id+tT\big)_\#\mu\right)(x,y)\\&=&\int W\big((1-t)x+tT(x),(1-t)y+tT(y)\big)\dd(\mu\otimes \mu).
\end{eqnarray*}
Similarly, if $V$ or $W$ are $\lambda$-convex we get $\lambda$-geodesical convexity. Note that in the case of $\mathcal V$ it is easy to see that the $\lambda$-convexity of $V$ is also necessary for the $\lambda$-geodesical convexity of $\mathcal V$, while the same is not true for $W$ and $\mathcal W$.

  The most interesting displacement convexity result is the one for functionals depending on the density. 
  To consider these functionals, we need some technical facts.
  
  The starting point is the computation of the density of an image measure, via standard change-of-variable techniques: if $T:\Omega\to \Omega$ is a map smooth enough\footnote{We need at least $T$ to be countably Lipschitz, i.e. $\Omega$ may be written as a countable union of measurable sets $(\Omega_i)_{i\geq 0}$ with $\Omega_0$ negligible and $T\res \Omega_i$ Lipschitz continuous for every $i\geq 1$.} and injective, and $\det(DT(x))\neq 0$ for a.e. $x\in\Omega$, if we set $\varrho_T:=T_\#\varrho$, then $\varrho_T$ is absolutely continuous with density given by
  $$\varrho_T=\frac{\varrho}{\det(DT)}\circ T^{-1}.$$
  
 Then, we underline an interesting computation, which can be proven as an exercice.
 \begin{lemma}\label{concagt}
 Let $A$ be a $d\times d$ matrix such that its eigenvalues $\lambda_i$  are all real and larger than $-1$ (for instance this is the case when $A$ is symmetric and $\mathrm{I}+A\geq 0$). Then $[0,1]\ni t\mapsto g(t):= \det(\mathrm{I}+tA)^{1/d}$ is concave.
 \end{lemma}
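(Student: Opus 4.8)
The plan is to diagonalize the determinant into a product over eigenvalues and then recognize $g$ as a geometric mean, whose concavity is classical. First I would reduce the determinant to a product. Since the eigenvalues of $A$ are $\lambda_1,\dots,\lambda_d$ (real, counted with algebraic multiplicity), the eigenvalues of $\mathrm{I}+tA$ are $1+t\lambda_i$, whence
$$\det(\mathrm{I}+tA)=\prod_{i=1}^d(1+t\lambda_i),\qquad g(t)=\prod_{i=1}^d(1+t\lambda_i)^{1/d}.$$
This identity holds even when $A$ is not diagonalizable, e.g.\ by Schur triangularization over $\mathbb C$ together with the reality of the $\lambda_i$. Next I would check positivity: for $t\in[0,1]$ the quantity $1+t\lambda_i$ is affine in $t$, so it attains its minimum over $[0,1]$ at an endpoint, namely $\min\{1,1+\lambda_i\}$, which is strictly positive because $\lambda_i>-1$. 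Hence every factor is positive, and $g$ is smooth and strictly positive on $[0,1]$.

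With these reductions in hand, the concavity follows in two equivalent ways. The conceptual route is to invoke that the geometric mean $G(x_1,\dots,x_d)=(x_1\cdots x_d)^{1/d}$ is concave on the positive orthant (a standard consequence of AM--GM, or equivalently of the superadditivity $(\prod(a_i+b_i))^{1/d}\ge(\prod a_i)^{1/d}+(\prod b_i)^{1/d}$). Since the map $t\mapsto(1+t\lambda_1,\dots,1+t\lambda_d)$ is affine and carries $[0,1]$ into the positive orthant, and concavity is preserved under precomposition with affine maps, $g=G\circ(\text{affine})$ is concave.

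The hands-on route, which is the one I would actually write out since the statement is posed as an exercise, is a direct Hessian computation. Setting $u:=g'/g=\frac1d\sum_i\frac{\lambda_i}{1+t\lambda_i}$ and $a_i:=\frac{\lambda_i}{1+t\lambda_i}$, one has $g''=g(u^2+u')$ with $u'=-\frac1d\sum_i a_i^2$, so that
$$g''=g\left[\frac{1}{d^2}\Big(\sum_i a_i\Big)^2-\frac1d\sum_i a_i^2\right]\le 0,$$
where the inequality is exactly Cauchy--Schwarz, $\big(\sum_i a_i\big)^2\le d\sum_i a_i^2$, combined with $g>0$. The only point requiring any care — and it is minor — is the justification of the product formula for $\det(\mathrm{I}+tA)$ when $A$ is neither symmetric nor diagonalizable, together with the verification that all factors stay positive on $[0,1]$; once these are settled, the concavity itself is a one-line consequence of either AM--GM or Cauchy--Schwarz.
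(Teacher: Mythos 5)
Your proof is correct and complete; note that the paper itself gives no proof of this lemma (it is explicitly left as an exercise, ``which can be proven as an exercice''), so there is nothing to compare against. Both of your routes work: the reduction $\det(\mathrm{I}+tA)=\prod_i(1+t\lambda_i)$ via triangularization is valid for non-diagonalizable $A$ with real spectrum, the positivity of each affine factor on $[0,1]$ follows from $\lambda_i>-1$ exactly as you say, and the Hessian computation $g''=g\bigl[\tfrac{1}{d^2}(\sum_i a_i)^2-\tfrac1d\sum_i a_i^2\bigr]\le 0$ is precisely Cauchy--Schwarz; the geometric-mean-precomposed-with-an-affine-map argument is the cleaner of the two and generalizes directly to the variant needed later in the paper, namely the concavity of $t\mapsto\det((1-t)A+tB)^{1/d}$ for symmetric positive-definite $A,B$.
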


We can now state the main theorem.
\begin{theorem}\label{convMC}
Suppose that $f(0)=0$ and that $s\mapsto s^{d}f(s^{-d})$ is convex and decreasing. Suppose that $\Omega$ is convex and take $1<p<\infty$. Then $\mathcal F$ is geodesically convex in $\WW_2$. \end{theorem}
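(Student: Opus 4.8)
The plan is to prove convexity of $t\mapsto\mathcal F(\mu_t)$ directly along the displacement geodesic, reducing the whole statement to a pointwise one-dimensional convexity fact about the integrand. First I would dispose of the trivial cases: since $\mathcal F(\varrho)=+\infty$ whenever $\varrho$ is not absolutely continuous, the inequality $\mathcal F(\mu_t)\le(1-t)\mathcal F(\mu_0)+t\mathcal F(\mu_1)$ holds automatically unless \emph{both} endpoints are absolutely continuous with finite energy. So I assume $\mu_0=\varrho_0\,\mathcal L^d$ and $\mu_1=\varrho_1\,\mathcal L^d$; by Theorem \ref{geodesics in Wp} together with Brenier's Theorem \ref{maintransport}, the (unique) $\WW_2$-geodesic is $\mu_t=[T_t]_\#\mu_0$ with $T_t=(1-t)\id+tT$ and $T=\nabla u$, $u$ convex.

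Next I would record the structure of $DT_t$. Writing $T_t=\nabla u_t$ with $u_t=(1-t)\tfrac{|x|^2}{2}+tu$ convex, we get $DT_t=(1-t)I+tD^2u=I+tA$ with $A:=D^2u-I$ symmetric and satisfying $I+A=D^2u\ge0$. Hence Lemma \ref{concagt} applies and $g(t):=\det(DT_t)^{1/d}$ is concave in $t$. Moreover, for $t\in[0,1)$ the function $u_t$ is \emph{strictly} convex (the quadratic term is), so $T_t$ is injective and $\mu_t$ is absolutely continuous; this is the point at which I would invoke the a.e.\ twice-differentiability of convex functions (Alexandrov) and the Monge--Amp\`ere change of variables to justify the density formula $\varrho_t=(\varrho_0/\det DT_t)\circ T_t^{-1}$.

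Using this change of variables, $dy=\det DT_t\,dx$, gives
$$\mathcal F(\mu_t)=\int_\Omega f\!\left(\frac{\varrho_0(x)}{\det DT_t(x)}\right)\det DT_t(x)\,dx,$$
so it suffices to show the integrand is convex in $t$ for a.e.\ fixed $x$, since convexity passes to integrals. Fix $x$, set $r=\varrho_0(x)>0$, $\delta(t)=\det DT_t(x)=g(t)^d$, and let $h(s)=s^df(s^{-d})$ be the convex decreasing function from the hypothesis. A direct computation gives
$$\delta(t)\,f\!\left(\frac{r}{\delta(t)}\right)=r\,h\!\left(\frac{g(t)}{r^{1/d}}\right).$$
Here $t\mapsto g(t)r^{-1/d}$ is concave (a positive rescaling of the concave $g$), and the composition of a convex nonincreasing function with a concave function is convex; multiplying by $r>0$ preserves convexity. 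Integrating in $x$ then yields convexity of $t\mapsto\mathcal F(\mu_t)$, which is the claim.

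The main obstacle I expect is not this algebraic core, which is clean, but the regularity underpinning the change of variables: justifying that the Brenier potential $u$ is twice differentiable a.e., that the displacement maps $T_t$ are a.e.\ injective with the Monge--Amp\`ere formula valid, and that the degenerate directions (where $\det DT_t$ vanishes, handled by $f(0)=0$) contribute nothing. These are exactly the technical points established in McCann's original argument, and I would cite them rather than reprove them.
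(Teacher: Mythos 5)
Your argument is correct and follows essentially the same route as the paper's proof: the same reduction to absolutely continuous endpoints, the same change of variables $y=T_t(x)$ with $DT_t=\mathrm{I}+tA$, the same appeal to Lemma \ref{concagt}, and the same composition of the convex decreasing map $s\mapsto s^df(s^{-d})$ with the concave $t\mapsto g(t)\varrho_0(x)^{-1/d}$ (your identity $\delta f(r/\delta)=r\,h(g/r^{1/d})$ just makes explicit what the paper states in words). The only points worth noting are that the set $\{\varrho_0=0\}$ should be handled as in the paper (the integrand is identically zero there, by $f(0)=0$), and that the regularity issues you flag at the end are indeed left to the cited literature in the paper as well.
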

\begin{proof}
Let us consider two measures $\mu_0$, $\mu_1$ with $\mathcal F(\mu_0),\mathcal F(\mu_1)<+\infty$. They are absolutely continuous and hence there is a unique constant-speed geodesic $\mu_t$ between them, which has the form $\mu_t=(T_t)_\#\mu_0$, where $T_t=\id+t(T-\id)$. Note that we have $T_t(x)=x-t\nabla\varphi(x),$ where $\varphi$ is such that $\frac{x^2}{2}-\varphi$ is convex. This implies that $D^2\varphi\leq I$ and $T_t$ is, for $t<1$, the gradient of a strictly convex function, hence it is injective. Moreover $\nabla\varphi$ is countably Lipschitz, and so is $T_t$. From the formula for the density of the image measure,  we know that $\mu_t$ is absolutely continuous and we can write its density $\varrho_t$ as $\varrho_t(y)=\varrho(T_t^{-1}(y))/\det( \mathrm{I}+tA(T_t^{-1}(y)))$, where $A=-D^2\varphi$ and $\varrho$ is the density of $\mu$, and hence
$$\mathcal F(\mu_t)=\int f\left(\frac{\varrho(T_t^{-1}(y)))}{\det( \mathrm{I}+tA(T_t^{-1}(y)))}\right)\dd y=
\int f\left(\frac{\varrho(x)}{\det( \mathrm{I}+tA(x)}\right)\det (\mathrm{I}+tA(x))\,\dd x,$$
where we used the change of variables $y=T_t(x)$ and $\dd y=\det DT_t(x)\,\dd x=\det(\mathrm{I}+tA(x))\,\dd x$.

From Lemma \ref{concagt} we know that $\det (\mathrm{I}+tA(x))=g(t,x)^d$ for a function $g:[0,1]\times \Omega$ which is concave in $t$. It is a general fact that the composition of a convex and decreasing function with a concave one gives a convex function. This implies that 
$$t\mapsto f\left(\frac{\varrho(x)}{g(t,x)^d}\right)g(t,x)^d$$
is convex (if $\varrho(x)\neq 0$ this uses the assumption on $f$ and the fact that $t\mapsto g(t,x)/\varrho(x)^{\frac1d}$ is concave; if $\varrho(x)=0$ then this function is simply zero). Finally, we proved convexity of $t\mapsto \mathcal F(\mu_t)$. 
\end{proof}
\begin{remark}
Note that the assumption that $s\mapsto s^{d}f(s^{-d})$ is convex and decreasing implies that $f$ itself is convex (the reader can check it as an exercise), a property which can be useful to establish, for instance, lower semicontinuity of $\mathcal F$.
\end{remark}

Let us see some easy examples of convex functions satisfying the assumptions of Theorem \ref{convMC}:
\begin{itemize}
\item for any $q>1$, the function $f(t)=t^q$ satisfies these assumptions, since $s^df(s^{-d})=s^{-d(q-1)}$ is convex and decreasing;
\item the entropy function $f(t)=t\log t$ also satisfies the assumptions since $s^df(s^{-d})=-d\log s$ is convex and decreasing;\index{entropy}
\item if $1-\frac 1d\leq m<1$ the function $f(t)=-t^m$ is convex, and if we compute $s^df(s^{-d})=-t^{m(1-d)}$ we get a convex and decreasing function since $m(1-d)<1$. Note that in this case $f$ is not superlinear, which requires some attention for the semicontinuity of $\mathcal F$. 
\end{itemize}

\paragraph{{\bf Convexity on generalized geodesics.}}

It is quite disappointing to note that the functional $\mu\mapsto W_2^2(\mu,\nu)$ is not, in general, displacement convex. This seems contrary to the intuition because usually squared distances are nice convex functions\footnote{Actually, this is true in normed spaces, but not even true in Riemannian manifolds, as it depends on curvature properties.}. However, we can see that this fails from the following easy example. Take $\nu=\frac 12\delta_{(1,0)}+\frac 12\delta_{(-1,0)}$ and $\mu_t=\frac 12\delta_{(t,a)}+\frac 12\delta_{(-t,-a)}$. If $a>1$, the curve $\mu_t$ is the geodesic between $\mu_{-1}$ and $\mu_{1}$ (because the optimal transport between these measures sends $(a,-1)$ to $(a,1)$ and $(-a,1)$ to $(-a,-1)$. Yet, if we compute $W_2^2(\mu_t,\nu)$ we have
$$W_2^2(\mu_t,\nu)= a^2+\min\{(1-t)^2,(1+t)^2\}.$$
But this function is not convex! (see Figure \ref{gen conv})

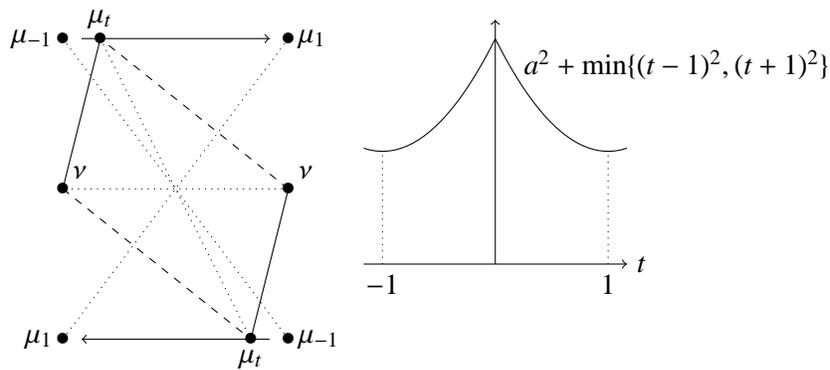
\begin{figure}[h]
\begin{center}
\begin{tikzpicture}[scale=0.5]

\draw (3,0) node{$\bullet$} node[above right]{$\nu$};
\draw (-3,0) node{$\bullet$} node[above right]{$\nu$};
\draw[dotted] (3,0)--(-3,0);
\draw (3,4) node{$\bullet$} node[right]{$\mu_1$};
\draw (-3,-4) node{$\bullet$} node[left]{$\mu_1$};
\draw[dotted] (3,4)--(-3,-4); 

\draw (-3,4) node{$\bullet$} node[left]{$\mu_{-1}$};
\draw (3,-4) node{$\bullet$} node[right]{$\mu_{-1}$};
\draw[dotted] (-3,4)--(3,-4);

\draw (-2,4) node{$\bullet$} node[above]{$\mu_{t}$};
\draw (2,-4) node{$\bullet$} node[below]{$\mu_t$};
\draw[dotted] (-2,4)--(2,-4);

\draw[->] (-2.5,4)--(2.5,4);
\draw[->] (2.5,-4)--(-2.5,-4);

\draw (3,0)--(2,-4);
\draw[dashed] (3,0)--(-2,4);
\draw (-3,0)--(-2,4);
\draw[dashed] (-3,0)--(2,-4);

\draw[->] (5,-2)--(12,-2);
\draw[->] (8.5,-2)--(8.5,4.5);
\draw plot[domain=5:8.5] (\x,{1+3*(0.333*(\x-5.5))^2});
\draw plot[domain=8.5:12] (\x,{1+3*(0.333*(\x-11.5))^2});
\draw[dotted] (5.5,-2)--(5.5,1);
\draw[dotted] (11.5,-2)--(11.5,1);
\draw (5.5,-2) node[below]{$-1$};
\draw (11.5,-2) node[below]{$1$};
\draw (12,-2) node[right]{$t$};
\draw (9,3.3) node[right]{$a^2+\min\{(t-1)^2,(t+1)^2\}$};
\end{tikzpicture}
\caption{The value of $W_2^2(\mu_t,\nu)$.}\label{gen conv}
\end{center}
\end{figure}

The lack of geodesic convexity of this easy functional\footnote{By the way, this functional can even be proven to be somehow geodetically concave, as it is shown in \cite{AmbGigSav}, Theorem 7.3.2.} is a problem for many issues, and in particular for the C$^2$G$^2$ condition, and an alternate notion has been proposed, namely that of convexity along generalized geodesics. 

\begin{definition}
If we fix an absolutely continuous probability $\varrho\in\pical(\Omega)$,  for every pair $\mu_0,\mu_1\in\pical(\Omega)$ we call {\it generalized geodesic} between $\mu_0$ and $\mu_1$ with base $\varrho$ in $\WW_2(\Omega)$  the curve $\mu_t=((1-t)T_0+tT_1)_\#\varrho$, where $T_0$ is the optimal transport map (for the cost $|x-y|^2$) from $\varrho$ to $\mu_0$, and $T_1$ from $\varrho$ to $\mu_1$.
\end{definition}

It is clear that $t\mapsto W_2^2(\mu_t,\varrho)$ satisfies 
\begin{eqnarray*}
W_2^2(\mu_t,\varrho)&\leq& \int |((1-t)T_0(x)+tT_1(x))-x|^2\dd\varrho(x)\\&\leq& (1-t)\int |T_0(x)-x|^2\dd\varrho(x)+t\int |T_1(x)-x|^2\dd\varrho(x)
=(1-t)W_2^2(\mu_0,\varrho)+tW_2^2(\mu_1,\varrho)
\end{eqnarray*}
and hence we have the desired convexity along this curve. Moreover, similar considerations to those we developed in this section show that all the functionals that we proved to be geodesically convex are also convex along generalized geodesics. For the case of functionals $\mathcal V$ and $\mathcal W$ this is easy, while for the case of the functional $\mathcal F$, Lemma \ref{concagt} has to be changed into ``$t\mapsto \det((1-t)A+tB)^{1/d}$ is concave, whenever $A$ and $B$ are symmetric and positive-definite'' (the proof is similar). We do not develop these proofs here, and we refer to \cite{AmbGigSav} or Chapter 7 in \cite{OTAM} for more details.

Of course, we could wonder whether the assumption C$^2$G$^2$ is satisfied in the Wasserstein space $\mathbb W_2(\Omega)$ for these functionals $\mathcal F, \mathcal V$ and $\mathcal W$: actually, if one looks closer at this questions, it is possible to see that the very definition of C$^2$G$^2$ has been given on purpose in order to face the case of Wasserstein spaces. Indeed, if we fix $\nu\in\pical(\Omega)$ and take $\mu_0,\mu_1$ two other probabilities, with $T_0,T_1$ the optimal transports from $\nu$ to $\mu_0$ and $\mu_1$, respectively, then the curve
\begin{equation}\label{geod gen}
\mu_t:=((1-t)T_0+tT_1)_\#\nu
\end{equation}
connects $\mu_0$ to $\mu_1$ and can be used in C$^2$G$^2$.

\paragraph{{\bf Displacement convexity and curvature conditions.}}

In the proof of the geodesic convexity of the functional $\mathcal F$ we strongly used the Euclidean structure of the geodesics in the Wasserstein space. The key point was form of the intermediate map $T_t$: a linear interpolation between the identity map $id$ and the optimal map $T$, together with the convexity properties of $t\mapsto\det(I+tA)^{1/d}$. On a Riemannian manifold, this would completely change as the geodesic curves between $x$ and $T(x)$, which are no more segments, could concentrate more or less than what happens in the Euclidean case, depending on the curvature of the manifold (think at the geodesics on a sphere connecting points close to the North Pole to points close to the South Pole: they are much farther from each other at their middle points than at their endpoints). It has been found (see \cite{vReStu}) that the condition of $\lambda$-geodesic convexity of the  Entropy functional $\varrho\mapsto \int\varrho\log(\varrho)\dd\mathrm{Vol}$ (where $\varrho$ is absolutely continuous w.r.t. the volume meaure $\mathrm{Vol}$ and densities are computed accordingly) on a manifold characterizes a lower bound on its Ricci curvature:
\begin{proposition}\label{SLV}
Let $M$ be a compact manifold of dimension $d$ and $\mathrm{Vol}$ its volume measure. Let $\mathcal E$ be the entropy functional defined via $\mathcal E(\varrho)=\int \varrho\log \varrho \,\dd\mathrm{Vol}$ for all measures $\varrho\ll\mathrm{Vol}$ (set to $+\infty$ on non-absolutely continuous measures). Then $\mathcal E$ is $\lambda$-geodesically convex in the Wasserstein space $\mathbb W_2(M)$ if and only if the Ricci curvature $\mathrm{Ric}_M$ satisfies $\mathrm{Ric}_M\geq \lambda$.
In the case $\lambda=0$, the same equivalence is true if one replaces the entropy function $f(t)=t\log t$ with the function $f_N(t)=-t^{1-1/N}$ with $N\geq d$.
\end{proposition}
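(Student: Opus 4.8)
The plan is to run the argument of Theorem~\ref{convMC}, but with the Euclidean linear interpolation $T_t=\id+t(T-\id)$ replaced by the geodesic interpolation on $M$ (the manifold version of Theorem~\ref{geodesics in Wp}), and with the algebraic Lemma~\ref{concagt} replaced by a differential inequality for the Jacobian of the geodesic flow. This is exactly the place where the Ricci curvature enters. First I would fix $\mu_0,\mu_1\ll\mathrm{Vol}$ with finite entropy, let $\varphi$ be the ($c$-concave) Kantorovich potential for $c=\frac12 d_M^2$, and define $T_t(x)$ as the value at time $t$ of the constant-speed geodesic joining $x$ to $T(x)=\exp_x(-\nabla\varphi(x))$, so that $\mu_t=(T_t)_\#\mu_0$ is the unique $\WW_2$-geodesic. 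Writing $\mathcal J_t(x)=\det \mathrm{d}T_t(x)$ for the Jacobian with respect to $\mathrm{Vol}$, the change-of-variables formula gives $\varrho_t(T_t(x))=\varrho_0(x)/\mathcal J_t(x)$, hence
$$\mathcal E(\mu_t)=\int\log\varrho_0\,\dd\mu_0-\int\log\mathcal J_t(x)\,\dd\mu_0(x),$$
where the first term does not depend on $t$.

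The key step is to control $t\mapsto\log\mathcal J_t(x)$ along each geodesic $\gamma_x(t)=T_t(x)$. Here $\mathrm{d}T_t$ is a Jacobi-field matrix $U(t)$ with $U(0)=\mathrm I$, and its logarithmic derivative $B=U'U^{-1}$ is \emph{symmetric} (because the velocity field is a gradient) and solves the matrix Riccati equation $B'+B^2+R=0$, where $R(t)$ is the tidal operator along $\gamma_x$ with $\mathrm{tr}\,R=\mathrm{Ric}(\dot\gamma_t,\dot\gamma_t)$. Since $\frac{\mathrm d}{\mathrm dt}\log\mathcal J_t=\mathrm{tr}\,B$, one gets $\frac{\mathrm d^2}{\mathrm dt^2}\log\mathcal J_t=-\mathrm{tr}(B^2)-\mathrm{Ric}(\dot\gamma_t,\dot\gamma_t)$, and the Cauchy--Schwarz inequality $\mathrm{tr}(B^2)\geq\frac1d(\mathrm{tr}\,B)^2$ (valid as $B$ is symmetric) shows that $h(t):=\mathcal J_t(x)^{1/d}$ satisfies
$$h''(t)\leq-\tfrac1d\,\mathrm{Ric}(\dot\gamma_t,\dot\gamma_t)\,h(t).$$
This is precisely the curved replacement for the concavity of $t\mapsto\det(\mathrm I+tA)^{1/d}$ in Lemma~\ref{concagt}. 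Differentiating $\mathcal E(\mu_t)=\mathrm{const}-\int\log\mathcal J_t\,\dd\mu_0$ twice and using $\mathrm{Ric}\geq\lambda$ together with the constant-speed identity $|\dot\gamma_t(x)|^2=d_M(x,T(x))^2$, I obtain
$$\frac{\mathrm d^2}{\mathrm dt^2}\mathcal E(\mu_t)\geq\int\mathrm{Ric}(\dot\gamma_t,\dot\gamma_t)\,\dd\mu_0\geq\lambda\int d_M(x,T(x))^2\,\dd\mu_0=\lambda\,W_2^2(\mu_0,\mu_1),$$
which is the second-order form of $\lambda$-geodesic convexity (cf.\ \eqref{defilambda}). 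For the case $\lambda=0$ with $f_N(t)=-t^{1-1/N}$, $N\geq d$, the same change of variables gives, for the associated functional $\mathcal F$, the expression $\mathcal F(\mu_t)=-\int\varrho_0^{-1/N}h_t^{d/N}\,\dd\mu_0$; since $h_t$ is concave (as $\mathrm{Ric}\geq0$) and $s\mapsto s^{d/N}$ is concave and increasing for $N\geq d$, $h_t^{d/N}$ is concave and $\mathcal F(\mu_t)$ is convex.

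For the converse I would argue by localization: assuming $\lambda$-convexity of $\mathcal E$ (resp.\ convexity of $\mathcal F$) along all geodesics, fix $x_0\in M$ and a unit vector $v$, and test the inequality on families of measures concentrating at $x_0$ whose optimal transport displaces mass along $\exp_{x_0}(tv)$. In the concentration limit the only surviving term in the second derivative of the entropy is $\mathrm{Ric}_{x_0}(v,v)$, so the convexity inequality forces $\mathrm{Ric}_{x_0}(v,v)\geq\lambda$, i.e.\ $\mathrm{Ric}_M\geq\lambda$.

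I expect the main obstacles to be twofold. First, the regularity needed to justify the change of variables and the Jacobi-field computation: one must invoke McCann's theorem on manifolds (the optimal map has the stated exponential form with $\varphi$ differentiable $\mu_0$-a.e., and the transport geodesics avoid the cut locus $\mu_0$-a.e.), and handle the possible singular part of $t\mapsto\log\mathcal J_t$, so that the distributional convexity computed above is legitimate. Second, the localization in the converse, where error terms must be controlled uniformly as the measures concentrate so that only the Ricci term survives. The symmetry of $B$ --- which underlies the Cauchy--Schwarz step and hence the sharp dimensional constant --- also deserves care and follows from the $c$-concavity of $\varphi$.
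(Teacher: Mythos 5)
The paper does not actually prove Proposition \ref{SLV}: it quotes the result from \cite{vReStu} (and the surrounding literature), offering only the heuristic remark that on a manifold the concentration of the interpolating geodesics replaces the Euclidean Lemma \ref{concagt}. Your sketch fills in exactly that heuristic, and it is the standard, correct route: the identity $\mathcal E(\mu_t)=\mathrm{const}-\int\log\mathcal J_t\,\dd\mu_0$, the Riccati equation $B'+B^2+R=0$ for $B=U'U^{-1}$ with $B$ symmetric, the trace inequality $\mathrm{tr}(B^2)\geq\frac1d(\mathrm{tr}\,B)^2$, and the resulting bound $\frac{\dd^2}{\dd t^2}\mathcal E(\mu_t)\geq\int\mathrm{Ric}(\dot\gamma_t,\dot\gamma_t)\,\dd\mu_0\geq\lambda W_2^2(\mu_0,\mu_1)$ are all right (note that for the entropy itself you only need $\mathrm{tr}(B^2)\geq 0$; the sharp $1/d$ constant and the concavity of $h_t=\mathcal J_t^{1/d}$ are what make the $f_N$ case with $N\geq d$ work, and your computation $\mathcal F(\mu_t)=-\int\varrho_0^{-1/N}h_t^{d/N}\dd\mu_0$ is correct). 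The localization strategy for the converse is also the one used in \cite{vReStu}.

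The gaps you flag are real and are precisely what occupies the cited references, so it is worth being explicit that your argument is a sketch at those points: (i) $\varphi$ only has an a.e.\ second-order expansion (Alexandrov sense), the transport geodesics must avoid the cut locus $\mu_0$-a.e., and one must know there are no conjugate points on $(0,1)$ so that $\mathcal J_t>0$ and $t\mapsto-\log\mathcal J_t(x)$ has no singular negative part in its distributional second derivative --- otherwise ``integrating the pointwise convexity'' is not legitimate; a cleaner variant (as in the Cordero-Erausquin--McCann--Schmuckenschl\"ager approach) proves the three-point concavity inequality for $\mathcal J_t^{1/d}$ directly by Jacobi-field comparison rather than by differentiating twice; (ii) in the converse, the error terms in the concentration limit must be shown to be $o(1)$ uniformly, which requires choosing the concentrating family carefully (e.g.\ translates of a small ball along $\exp_{x_0}(tv)$). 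With those points supplied, the proof is complete and matches the intended argument behind the statement.
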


This fact will be the basis (we will see it in Section \ref{heat}) of a definition based on optimal transport of the notion of Ricci curvature bounds in more abstract spaces, a definition independently proposed in two celebrated papers by Sturm, \cite{sturm} and Lott and Villani, \cite{LotVil}.

\begin{remark}
The reader who followed the first proofs of this section has for sure observed that it is easy, in the space $\mathbb W_2(\Omega)$, to produce $\lambda$-geodesically convex functionals which are not geodesically convex (with $\lambda<0$, of course), of the form $\mathcal V$ (just take a $\lambda$-convex function $V$ which is not convex), but that Theorem \ref{convMC} only provides geodesic convexity (never provides $\lambda$-convexity without convexity) for functionals of the form $\mathcal F$: this is indeed specific to the Euclidean case, where the optimal transport has the form $T(x)=x-\nabla\varphi(x)$; in Riemannian manifolds or other metric measure spaces, this can be different!
\end{remark}

\paragraph{{\bf Geodesic convexity and uniqueness of gradient flows.}} The fact that $\lambda$-convexity is a crucial tool to establish uniqueness and stability results in gradient flows is not only true in the abstract theory of Section \ref{metricth}, where we saw that the EVI condition (intimately linked to $\lambda$-convexity) provides stability. Indeed, it can also be observed in the concrete case of gradient flows in $\mathbb W_2(\Omega)$, which take the form of the PDE \eqref{PDEgradflow}. We will see this fact via some formal considerations, starting from an interesting lemma:

\begin{lemma}\label{geodconvimpliesgronw}
Suppose that $F:\mathbb W_2(\Omega)\to\R\cup\{+\infty\}$ is $\lambda$-geodesically convex. Then, for every $\varrho_0,\varrho_1\in\pical(\Omega)$ for which the integrals below are well-defined, we have
$$\int \nabla\varphi\cdot\nabla\left(\frac{\delta F}{\delta\varrho}(\varrho_0)\right)\dd \varrho_0+\int \nabla\psi\cdot\nabla\left(\frac{\delta F}{\delta\varrho}(\varrho_1)\right)\dd \varrho_1\geq \lambda W_2^2(\varrho_0,\varrho_1),$$
where $\varphi$ is the Kantorovich potential in the transport from $\varrho_0$ to $\varrho_1$ and $\psi=\varphi^c$ is the Kantorovich potential from $\varrho_1$ to $\varrho_0$. 
\end{lemma}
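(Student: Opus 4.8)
The plan is to exploit the $\lambda$-geodesic convexity of $F$ along the unique geodesic joining $\varrho_0$ and $\varrho_1$, and to read off the desired inequality as a comparison of the derivatives of $t\mapsto F(\mu_t)$ at the two endpoints. Since both $\varrho_0$ and $\varrho_1$ must be absolutely continuous for the integrals to make sense, Theorem \ref{geodesics in Wp} provides the unique constant-speed geodesic $\mu_t=(T_t)_\#\varrho_0$ with $T_t=\id+t(T-\id)$ and $T=\id-\nabla\varphi$, whose velocity field satisfies $\vv_0=-\nabla\varphi$ at $t=0$ and $\vv_1=\nabla\psi$ at $t=1$. Set $h(t):=F(\mu_t)$ and $W:=W_2^2(\varrho_0,\varrho_1)$.

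First I would rewrite the $\lambda$-convexity inequality \eqref{defilambda} as the statement that $g(t):=h(t)-\frac\lambda2 W t^2$ is convex on $[0,1]$: substituting $h(t)=g(t)+\frac\lambda2 Wt^2$ into $g(t)\le(1-t)g(0)+tg(1)$ returns exactly $h(t)\le(1-t)h(0)+th(1)-\lambda\frac{t(1-t)}2 W$. Convexity of $g$ then gives the one-sided comparison $g'(1^-)\ge g'(0^+)$. Since $g'(t)=h'(t)-\lambda Wt$, this reads
$$h'(1^-)-\lambda W\ge h'(0^+),\qquad\text{that is}\qquad h'(1^-)-h'(0^+)\ge \lambda W.$$

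It remains to compute the two endpoint derivatives. Along the geodesic the continuity equation $\partial_t\mu_t+\nabla\cdot(\vv_t\mu_t)=0$ holds, so using the definition of the first variation and integrating by parts I would write
$$h'(t)=\int \frac{\delta F}{\delta\varrho}(\mu_t)\,\dd(\partial_t\mu_t)=\int \nabla\Big(\frac{\delta F}{\delta\varrho}(\mu_t)\Big)\cdot\vv_t\,\dd\mu_t.$$
Evaluating at $t=0$ with $\vv_0=-\nabla\varphi$ and at $t=1$ with $\vv_1=\nabla\psi$ yields
$$h'(0^+)=-\int\nabla\varphi\cdot\nabla\Big(\frac{\delta F}{\delta\varrho}(\varrho_0)\Big)\dd\varrho_0,\qquad h'(1^-)=\int\nabla\psi\cdot\nabla\Big(\frac{\delta F}{\delta\varrho}(\varrho_1)\Big)\dd\varrho_1.$$
Plugging these into $h'(1^-)-h'(0^+)\ge\lambda W$ produces precisely the asserted inequality.

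The computation is formal, and the main obstacle to making it rigorous lies exactly in this differentiation step: one must justify that $t\mapsto F(\mu_t)$ is one-sidedly differentiable at the endpoints and that its derivative is given by the chain-rule formula above, including the legitimacy of the integration by parts (absence of boundary contributions on $\partial\Omega$) and the identification of $\vv_0,\vv_1$ with $-\nabla\varphi$ and $\nabla\psi$ when $T$ is only countably Lipschitz. These points are treated carefully in \cite{AmbGigSav} and \cite{OTAM}; here they are taken for granted, consistently with the formal nature of the statement.
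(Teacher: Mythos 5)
Your proposal is correct and follows essentially the same route as the paper: differentiate the $\lambda$-convexity inequality at the two endpoints of the geodesic (where it is an equality), and compute $\frac{d}{dt}F(\mu_t)$ formally via the continuity equation and the first variation, using $\vv_0=-\nabla\varphi$, $\vv_1=\nabla\psi$. The only cosmetic difference is that you package the endpoint-derivative comparison as convexity of $t\mapsto F(\mu_t)-\frac{\lambda}{2}W_2^2\,t^2$ (strictly speaking the definition only gives comparison with the endpoints, not full convexity, but the consequence $g'(0^+)\le g(1)-g(0)\le g'(1^-)$ you need follows all the same), whereas the paper differentiates the inequality directly.
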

\begin{proof}
Let $\varrho_t$ be the the geodesic curve in $\mathbb W_2(\Omega)$ connecting $\varrho_0$ to $\varrho_1$ and $g(t):=F(\varrho_t)$. The assumption of $\lambda$-convexity means (see the definition in \eqref{defilambda})
$$g(t)\leq (1-t)g(0)+tg(1)-\frac\lambda 2 t(1-t) W_2^2(\varrho_0,\varrho_1).$$
Since the above inequality is an equality for $t=0,1$, we can differentiate it at these two points thus obtaining
$$g'(0)\leq g(1)-g(0)-\frac\lambda 2  W_2^2(\varrho_0,\varrho_1),\quad g'(1)\geq  g(1)-g(0)+\frac\lambda 2  W_2^2(\varrho_0,\varrho_1),$$
which implies 
$$g'(0)-g'(1)\leq -\lambda W_2^2(\varrho_0,\varrho_1).$$
Then, we compute the derivative of $g$, formally obtaining
$$g'(t)=\int \frac{\delta F}{\delta\varrho}(\varrho_t)\partial_t\varrho_t=-\int \frac{\delta F}{\delta\varrho}(\varrho_t)\nabla\cdot(\vv_t \varrho_t)=\int \nabla\left(\frac{\delta F}{\delta\varrho}(\varrho_t)\right)\cdot\vv_t\,\dd \varrho_t,$$
and, using $\vv_0=-\nabla\varphi$ and $\vv_1=\nabla\psi$, we obtain the claim.
\end{proof}

With the above lemma in mind, we consider two curves $\varrho^0_t$ and $\varrho^1_t$, with 
$$\partial_t \varrho^i_t+\nabla\cdot(\varrho^i_t \vv^i_t)=0\qquad\mbox{ for }i=0,1,$$
where the vector fields $\vv^i_t$ are their respective velocty fields provided by Theorem \ref{curves and PDE}. Setting $d(t):=\frac 12 W_2^2(\varrho^0_t,\varrho^1_t)$, it is natural to guess that we have
$$d'(t)= \int \nabla\varphi\cdot\vv^0_t\dd\varrho^0_t+\int \nabla\psi\cdot\vv^1_t\dd\varrho^1_t,$$
where $\varphi$ is the Kantorovich potential in the transport from $\varrho^0_t$ to $\varrho^1_t$ and $\psi=\varphi^c$. Indeed, a rigorous proof is provided in \cite{AmbGigSav} or in Section 5.3.5 of \cite{OTAM}, but one can guess it from the duality formula
$$d(t)=\max\left\{\int\!\phi \,\dd\varrho^0_t+\!\!\int\!\psi \,\dd\varrho^1_t\;:\;\phi(x)\!+\!\psi(y)\leq \frac12|x-y|^2)\right\}.$$
As usual, differentiating an expression written as a max involves the optimal functions $\phi,\psi$ in such a max, and the terms $\partial_t \varrho^i_t$ have been replaced by $-\nabla\cdot(\varrho^i_t \vv^i_t)$ as in the proof Lemma \ref{geodconvimpliesgronw}.

When the two curves $\varrho^0_t$ and $\varrho^1_t$ are solutions of \eqref{PDEgradflow} we have $\vv^i_t=-\nabla\left(\frac{\delta F}{\delta\varrho}(\varrho^i_t)\right)$, and Lemma \ref{geodconvimpliesgronw} allows to obtain the following:
\begin{proposition}
Suppose that $F:\mathbb W_2(\Omega)\to\R\cup\{+\infty\}$ is $\lambda$-geodesically convex and that the  two curves $\varrho^0_t$ and $\varrho^1_t$ are solutions of \eqref{PDEgradflow}. Then, setting $d(t):=\frac 12W_2^2(\varrho^0_t,\varrho^1_t)$, we have
$$d'(t)\leq -\lambda d(t).$$
This implies uniqueness of the solution of \eqref{PDEgradflow} for fixed initial datum, stability, and exponential convergence as $t\to+\infty$ if $\lambda>0$.\end{proposition}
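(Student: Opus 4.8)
The plan is to follow exactly the roadmap sketched just before the statement: differentiate $d(t)=\tfrac12 W_2^2(\varrho^0_t,\varrho^1_t)$ in time, rewrite the derivative using the PDE \eqref{PDEgradflow} that both curves satisfy, bound it from above by means of Lemma \ref{geodconvimpliesgronw}, and finally close the argument with Gronwall's lemma to extract uniqueness, stability, and exponential convergence.

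First I would establish the differentiation formula
$$d'(t)=\int \nabla\varphi\cdot\vv^0_t\,\dd\varrho^0_t+\int \nabla\psi\cdot\vv^1_t\,\dd\varrho^1_t,$$
where $\varphi$ is the Kantorovich potential from $\varrho^0_t$ to $\varrho^1_t$, $\psi=\varphi^c$, and $\vv^i_t$ are the velocity fields of the two curves provided by Theorem \ref{curves and PDE}. The idea is to start from the duality representation
$$d(t)=\max\left\{\int\phi\,\dd\varrho^0_t+\int\psi\,\dd\varrho^1_t\;:\;\phi(x)+\psi(y)\leq \tfrac12|x-y|^2\right\}$$
and to differentiate this maximum. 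As usual for an envelope of linear functionals, at a point of differentiability the derivative is obtained by freezing the optimal pair $(\varphi,\psi)$ and differentiating only the explicit time dependence carried by $\partial_t\varrho^i_t$; replacing $\partial_t\varrho^i_t=-\nabla\cdot(\vv^i_t\varrho^i_t)$ and integrating by parts then produces the two gradient terms above. This is precisely the two-variable chain-rule computation already used in the metric EVI estimate of Proposition \ref{uniqEVI}, now made explicit for the quadratic cost.

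Next I would use that both curves solve \eqref{PDEgradflow}, so that $\vv^i_t=-\nabla\big(\frac{\delta F}{\delta\varrho}(\varrho^i_t)\big)$. Substituting into the formula for $d'(t)$ gives
$$d'(t)=-\left(\int \nabla\varphi\cdot\nabla\Big(\tfrac{\delta F}{\delta\varrho}(\varrho^0_t)\Big)\dd\varrho^0_t+\int \nabla\psi\cdot\nabla\Big(\tfrac{\delta F}{\delta\varrho}(\varrho^1_t)\Big)\dd\varrho^1_t\right).$$
Applying Lemma \ref{geodconvimpliesgronw} with $\varrho_0=\varrho^0_t$ and $\varrho_1=\varrho^1_t$ (its hypothesis of $\lambda$-geodesic convexity of $F$ being exactly what we assume) bounds the parenthesis from below by $\lambda W_2^2(\varrho^0_t,\varrho^1_t)$, so that $d'(t)\leq -\lambda W_2^2(\varrho^0_t,\varrho^1_t)=-2\lambda d(t)$, the announced differential inequality. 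Gronwall's lemma then yields $d(t)\leq e^{-2\lambda t}d(0)$, i.e. $W_2(\varrho^0_t,\varrho^1_t)\leq e^{-\lambda t}W_2(\varrho^0_0,\varrho^1_0)$, in agreement with the contraction rate of Proposition \ref{uniqEVI}. Uniqueness follows by taking $\varrho^0_0=\varrho^1_0$, which forces $d\equiv 0$; stability is the continuous dependence on the initial datum encoded in the same contraction estimate; and when $\lambda>0$, comparing an arbitrary solution with the stationary solution given by the unique minimizer $\bar\varrho$ of $F$ gives $W_2(\varrho_t,\bar\varrho)\leq e^{-\lambda t}W_2(\varrho_0,\bar\varrho)\to 0$, the exponential convergence to equilibrium.

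The main obstacle is the rigorous justification of the very first step, namely the differentiation of $t\mapsto W_2^2(\varrho^0_t,\varrho^1_t)$ together with the envelope and integration-by-parts manipulations. One must argue that $d$ is absolutely continuous, that at almost every $t$ the derivative is indeed computed along the optimal potentials only, and that the weak formulation of the continuity equation legitimately turns $\partial_t\varrho^i_t$ into $\int\nabla\phi\cdot\vv^i_t\,\dd\varrho^i_t$; this requires sufficient regularity and integrability of the potentials $\varphi,\psi$ and of the fields $\vv^i_t$, as well as care about the non-uniqueness of $\vv^i_t$. As the paper itself notes, these points are genuinely delicate and are carried out in full in \cite{AmbGigSav} and in Section 5.3.5 of \cite{OTAM}; the computation above is the formal skeleton one makes precise there, while the remaining steps (substitution, invoking the Lemma, and Gronwall) are routine.
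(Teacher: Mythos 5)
Your proposal is correct and follows essentially the same route as the paper: the formal differentiation of $d(t)$ via the duality formula (with the rigorous justification of that step deferred, exactly as the paper does, to \cite{AmbGigSav} and Section 5.3.5 of \cite{OTAM}), the substitution $\vv^i_t=-\nabla\big(\frac{\delta F}{\delta\varrho}(\varrho^i_t)\big)$, an application of Lemma \ref{geodconvimpliesgronw}, and Gronwall's lemma. One small remark: your computation actually yields the sharper inequality $d'(t)\leq -2\lambda d(t)$, consistent with the contraction rate $e^{-\lambda t}$ for $W_2$ and with Proposition \ref{uniqEVI}, whereas the statement as printed reads $d'(t)\leq-\lambda d(t)$; this is a harmless factor-of-two slip in the paper's statement, not a flaw in your argument.
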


\subsection{Analysis of the Fokker-Planck equation as a gradient flow in $\mathbb W_2$}\label{secFP}

This section will be a technical parenthesis providing proof details in a particular case study, that of the Fokker-Planck equation, which is  the gradient flow of the functional \index{entropy}\index{potential energy}
$$J(\varrho)=\int_\Omega \varrho\log \varrho + \int_\Omega V\dd\varrho,$$
 where $V$ is a $C^1$ function\footnote{The equation would also be meaningful for $V$ only Lipschitz continuous, but we prefer to stick to the $C^1$ case for simplicity.} on a domain $\Omega$, that we will choose compact for simplicity. The initial measure $\varrho_0\in\pical(\Omega)$ is taken such that $J(\varrho_0)<+\infty$. 
 
 We stress that the first term of the functional is defined as
 $$\mathcal E(\varrho):=\begin{cases}\int_\Omega \varrho(x)\log \varrho(x)\,\dd x&\mbox{ if }\varrho\ll\lcal^d,\\
					+\infty &\mbox{ otherwise,}\end{cases}$$
where we identify the measure $\varrho$ with its density, when it is absolutely continuous. This functional is l.s.c. for the weak topology of measures (for general references on the semicontinuity of convex functionals on the space of measures we refer to Chapter 7 in \cite{OTAM} or to \cite{But-lsc}), which is equivalent, on the compact domain $\Omega$, to the $W_2$ convergence.
Semi-continuity allows to establish the following:
\begin{proposition}
The functional $J$ has a unique minimum over $\pical(\Omega)$. In particular $J$ is bounded from below. Moreover, for each $\tau>0$ the following sequence of optimization problems recursively defined is well-posed
\begin{equation}\label{rhok+}
\varrho^\tau_{(k+1)}\in \argmin_\varrho\quad J(\varrho)+\frac{W_2^2(\varrho,\varrho^\tau_{(k)})}{2\tau},
\end{equation}
which means that there is a minimizer at every step, and this minimizer is unique.
\end{proposition}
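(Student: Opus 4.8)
The plan is to dispatch all three assertions by the direct method of the calculus of variations: a lower bound together with weak-$*$ compactness and lower semicontinuity gives existence, while strict convexity gives uniqueness. The recursive problems \eqref{rhok+} are handled exactly like the bare minimization of $J$, the transport penalty being a harmless extra term. First I would check the lower bound. As $\Omega$ is compact, $V$ is bounded, so $\int_\Omega V\,\dd\varrho\ge \min_\Omega V>-\infty$ for every $\varrho\in\pical(\Omega)$; and Jensen's inequality for the convex function $s\mapsto s\log s$, applied with the reference probability $\dd x/|\Omega|$, yields $\mathcal E(\varrho)\ge -\log|\Omega|$ for every absolutely continuous $\varrho$ (with $\mathcal E=+\infty$ otherwise). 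Hence $J\ge \min_\Omega V-\log|\Omega|>-\infty$, and the same bound holds for each $J_\tau(\varrho):=J(\varrho)+W_2^2(\varrho,\varrho^\tau_{(k)})/(2\tau)$, since the added term is nonnegative.

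For existence I would take a minimizing sequence (the infimum being finite, thanks to the competitor $\varrho_0$ with $J(\varrho_0)<+\infty$) and extract a weakly-$*$ convergent subsequence $\varrho_n\deb\bar\varrho\in\pical(\Omega)$, using that $\pical(\Omega)$ is weakly-$*$ compact when $\Omega$ is compact. The three pieces pass to the limit in the right direction: $\mathcal E$ is l.s.c. for the weak topology (as recalled just before the statement), $\varrho\mapsto\int V\,\dd\varrho$ is continuous because $V\in C(\Omega)$, and $\varrho\mapsto W_2^2(\varrho,\varrho^\tau_{(k)})$ is continuous because on the compact $\Omega$ the distance $W_2$ metrizes weak convergence. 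Thus $J$ and every $J_\tau$ are l.s.c., so $\bar\varrho$ realizes the minimum. To legitimize the recursion I would observe by induction that every iterate has finite energy, since $J(\varrho^\tau_{(k+1)})\le J_\tau(\varrho^\tau_{(k+1)})\le J_\tau(\varrho^\tau_{(k)})=J(\varrho^\tau_{(k)})<+\infty$, so each step starts from an admissible point of finite value.

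For uniqueness I would use strict convexity along the \emph{linear} interpolation $\varrho_t=(1-t)\varrho^0+t\varrho^1$, which stays inside $\pical(\Omega)$. Any two minimizers are absolutely continuous with finite entropy (the minimum being finite), and the pointwise strict convexity of $s\mapsto s\log s$ makes $\mathcal E$ strictly convex along $\varrho_t$ as soon as $\varrho^0\ne\varrho^1$; meanwhile $\varrho\mapsto\int V\,\dd\varrho$ is affine and, by Proposition \ref{conv_subdiff_first var}, $\varrho\mapsto W_2^2(\varrho,\varrho^\tau_{(k)})=\Wc(\varrho,\varrho^\tau_{(k)})$ is convex. Hence $J$ and each $J_\tau$ are strictly convex on their domain, and the midpoint of two distinct minimizers would give a strictly smaller value, a contradiction.

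The step to be most careful about is this last one. The natural temptation is to invoke displacement (geodesic) convexity, but that route is unavailable: the map $\varrho\mapsto W_2^2(\varrho,\nu)$ is \emph{not} geodesically convex, which is precisely the counterexample displayed later in the paper. The whole uniqueness argument must therefore live in the linear convex structure of measures, where strictness is supplied by the entropy alone and the transport term only needs to be convex in the ordinary sense, as guaranteed by Proposition \ref{conv_subdiff_first var}.
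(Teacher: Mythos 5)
Your proposal is correct and follows essentially the same route as the paper's proof: the direct method using weak-$*$ compactness of $\pical(\Omega)$ (equivalent to $W_2$-compactness on the compact domain), lower semicontinuity of the entropy together with continuity of the potential and transport terms, and uniqueness from ordinary (linear) convexity of all terms plus strict convexity of the entropy. The extra details you supply (the Jensen lower bound, the induction keeping each iterate at finite energy, and the explicit warning that one must use linear rather than displacement convexity for the $W_2^2$ term) are all consistent with, and merely flesh out, the paper's terser argument.
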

\begin{proof}
Just apply the direct method, noting that $\pical(\Omega)$ is compact for the weak convergence, which is the same as the convergence for the $W_2$ distance (again, because $\Omega$ is compact), and for this convergence $\mathcal F$ is l.s.c. and the other terms are continuous. This gives at the same time the existence of a minimizer for $J$ and of a solution to each of the above minimization problems \eqref{rhok+}. Uniqueness comes from the fact that all the functionals are convex (in the usual sense) and $\mathcal F$ is strictly convex.
\end{proof}

\paragraph{{\bf Optimality conditions at each time step.}}

A first preliminary result we need is the following :
\begin{lemma}\label{pos and log}
Any minimizer $\widehat\varrho$ in \eqref{rhok+} must satisfy $\widehat\varrho>0$ a.e.
\end{lemma}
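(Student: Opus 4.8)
The plan is to argue by contradiction, using an explicit perturbation that exploits the infinite (negative) slope of the entropy at density zero. Suppose $B:=\{\widehat\varrho=0\}$ has positive Lebesgue measure, so that $m:=\lcal^d(B)/\lcal^d(\Omega)>0$, and let $\bar\varrho:=\lcal^d(\Omega)^{-1}\mathbf 1_\Omega$ be the normalized Lebesgue density on the bounded domain $\Omega$. For $\ve\in(0,1)$ I would introduce the competitor $\varrho_\ve:=(1-\ve)\widehat\varrho+\ve\bar\varrho\in\pical(\Omega)$ and compare the value of the functional in \eqref{rhok+} at $\varrho_\ve$ with its value at $\widehat\varrho$, aiming to show that this difference becomes strictly negative for $\ve$ small, contradicting minimality.

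First I would dispose of the two harmless terms. Since $\Omega$ is compact and $V\in C^1$, the potential part is linear in $\varrho$, so $\int V\,d\varrho_\ve-\int V\,d\widehat\varrho=\ve\int V\,d(\bar\varrho-\widehat\varrho)=O(\ve)$. For the transport term, Proposition \ref{conv_subdiff_first var} gives convexity of $\varrho\mapsto\Wc(\varrho,\varrho^\tau_{(k)})$ (with $c=\tfrac12|x-y|^2$), whence $\Wc(\varrho_\ve,\varrho^\tau_{(k)})-\Wc(\widehat\varrho,\varrho^\tau_{(k)})\le \ve\big(\Wc(\bar\varrho,\varrho^\tau_{(k)})-\Wc(\widehat\varrho,\varrho^\tau_{(k)})\big)=O(\ve)$, all quantities being finite on the compact $\Omega$. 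Thus the non-entropy terms together contribute at most $C\ve$.

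The decisive term is the entropy $\mathcal E$, which I would split over $B$ and $\Omega\setminus B$. On $B$ one has $\widehat\varrho=0$, hence $\varrho_\ve=\ve\bar\varrho$, giving the exact value $\int_B \ve\bar\varrho\log(\ve\bar\varrho)=m\,\ve\log\ve+C_1\ve$. On $\Omega\setminus B$ I would invoke the pointwise convexity of $t\mapsto t\log t$ applied to the convex combination, namely $\varrho_\ve\log\varrho_\ve\le(1-\ve)\widehat\varrho\log\widehat\varrho+\ve\bar\varrho\log\bar\varrho$, which after integration yields $\int_{\Omega\setminus B}(\varrho_\ve\log\varrho_\ve-\widehat\varrho\log\widehat\varrho)\le C_2\ve$; here I use that $\mathcal E(\widehat\varrho)<+\infty$, which holds because $\widehat\varrho$ is a minimizer and $\varrho^\tau_{(k)}$ is an admissible competitor of finite energy. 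Summing the two contributions gives $\mathcal E(\varrho_\ve)-\mathcal E(\widehat\varrho)\le m\,\ve\log\ve+C\ve$.

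Putting everything together, the total increment of the objective is at most $m\,\ve\log\ve+C'\ve=\ve(m\log\ve+C')$, which is strictly negative for $\ve$ small since $m>0$ and $\log\ve\to-\infty$; this contradicts the minimality of $\widehat\varrho$, so $\lcal^d(B)=0$ and $\widehat\varrho>0$ a.e. I expect the main obstacle to be precisely the control of the entropy on $\Omega\setminus B$: one must ensure that this piece does not deteriorate to $+\infty$ and cancel the negative gain coming from $B$. The pointwise convexity inequality, together with the finiteness of $\mathcal E(\widehat\varrho)$, is exactly what keeps it at order $\ve$, so that the genuinely superlinear term $m\,\ve\log\ve$ produced on $B$ dominates.
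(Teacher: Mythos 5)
Your proof is correct and follows essentially the same route as the paper: the same competitor $(1-\ve)\widehat\varrho+\ve\bar\varrho$, the same $O(\ve)$ control of the potential and Wasserstein terms by linearity and convexity, and the same exploitation of the $\ve\log\ve$ contribution on $\{\widehat\varrho=0\}$ to contradict minimality. The only (harmless) difference is that on the positive set you bound the entropy increment by the chord inequality for $t\mapsto t\log t$ together with $t\log t\geq -1/e$, whereas the paper uses the tangent-line inequality at $\varrho_\ve$ and then an extra comparison with $\log c$; both yield the needed $O(\ve)$ bound.
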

\begin{proof}
Consider the measure $\widetilde\varrho$ with constant positive density $c$ in $\Omega$ (i.e. $c=|\Omega|^{-1}$). Let us define $\varrho_\ve$ as $(1-\ve)\widehat\varrho+\ve\widetilde\varrho$ and compare $\widehat\varrho$ to $\varrho_\ve$. 

By optimality of $\widehat\varrho$, we may write
\begin{equation}\label{a estimer with C}
\mathcal E(\widehat\varrho)-\mathcal E(\varrho_\ve)\leq  \int_\Omega V\dd\varrho_\ve - \int_\Omega V\dd\widehat\varrho+\frac{W_2^2(\varrho_\ve,\varrho^\tau_{(k)})}{2\tau}-\frac{W_2^2(\widehat\varrho,\varrho^\tau_{(k)})}{2\tau}.
\end{equation}
The two differences in the right hand side may be easily estimated (by convexity, for instance) so that we get (for a constant $C$ depending on $\tau$, of course):
$$\int_\Omega f(\widehat\varrho)-f(\varrho_\ve)\leq C\ve$$
where $f(t)=t\log t$ (set to $0$ in $t=0$). Write 
$$A=\{x\in\Omega\,:\,\widehat\varrho(x)>0\},\quad B=\{x\in\Omega\,:\,\widehat\varrho(x)=0\}.$$
Since $f$ is convex we write, for $x\in A$, $ f(\widehat\varrho(x))-f(\varrho_\ve(x))\geq (\widehat\varrho(x)-\varrho_\ve(x))f'(\varrho_\ve(x))=\ve(\widehat\varrho(x)-\widetilde\varrho(x))(1+\log\varrho_\ve(x))$. For $x\in B$ we simply write $ f(\widehat\varrho(x))-f(\varrho_\ve(x))=-\ve c \log(\ve c)$. This allows to write
\begin{equation*}
-\ve c \log(\ve c)|B|+\ve \int_A (\widehat\varrho(x)-c)(1+\log\varrho_\ve(x))\,\dd x\leq C\ve
\end{equation*}
and, dividing by $\ve$, 
\begin{equation}\label{avant d'estimer nicolas}
-c \log(\ve c)|B|+ \int_A (\widehat\varrho(x)-c)(1+\log\varrho_\ve(x))\,\dd x\leq C
\end{equation}
Note that the always have 
$$(\widehat\varrho(x)-c)(1+\log\varrho_\ve(x))\geq (\widehat\varrho(x)-c)(1+\log c)$$ 
(just distinguish between the case $\widehat\varrho(x)\geq c$ and $\widehat\varrho(x)\leq c)$. Thus,  we may write
$$- c \log(\ve c)|B|+\int_{A} (\widehat\varrho(x)-c)(1+\log c)\,\dd x\leq C.$$
Letting $\ve\to 0$ provides a contradiction, unless $|B|=0$.
\end{proof}
\begin{remark}
If, after proving $|B|=0$, we go on with the computations (as it is actually done in Chapter 8 in \cite{OTAM}), we can also obtain $\log\widehat\varrho\in L^1$, which is a stronger condition than just $\widehat\varrho>0$ a.e.
\end{remark}
%

We can now compute the first variation and give optimality conditions on the optimal $\varrho^\tau_{(k+1)}$.

\begin{proposition}\label{opticond}
The optimal measure $\varrho^\tau_{(k+1)}$ in \eqref{rhok+} satisfies
\begin{equation}\label{inphik}
\log(\varrho^\tau_{(k+1)})+V+\frac{\varphi}{\tau}=\mbox{const. a.e.}
\end{equation}
where $\varphi$ is the (unique) Kantorovich potential from $\varrho^\tau_{(k+1)}$ to $\varrho^\tau_{(k)}$.
In particular, $\log\varrho^\tau_{(k+1)}$ is Lipschitz continuous. If $T^\tau_k$ is the optimal transport  from $\varrho^\tau_{(k+1)}$ to $\varrho^\tau_{(k)}$, then it satisfies
\begin{equation}\label{vk}
\vv^\tau_{(k)}:=\frac{\id-T^\tau_k}{\tau}=-\nabla\left(\log(\varrho^\tau_{(k+1)})+V\right)\;\mbox{a.e.}
\end{equation}
\end{proposition}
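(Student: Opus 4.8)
The plan is to obtain \eqref{inphik} as the Euler--Lagrange equation of the minimization \eqref{rhok+}, through a first-variation argument that treats the three pieces $\mathcal E(\varrho)=\int\varrho\log\varrho$, $\mathcal V(\varrho)=\int V\,\dd\varrho$ and $\frac{1}{2\tau}W_2^2(\varrho,\varrho^\tau_{(k)})$ separately. Write $\widehat\varrho:=\varrho^\tau_{(k+1)}$. I would fix an arbitrary competitor $\tilde\varrho\in\pical(\Omega)$ with bounded density and, for $\varepsilon\in[0,1]$, consider the convex interpolation $\varrho_\varepsilon:=(1-\varepsilon)\widehat\varrho+\varepsilon\tilde\varrho=\widehat\varrho+\varepsilon(\tilde\varrho-\widehat\varrho)$. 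Since $\widehat\varrho$ is the (unique) minimizer and each term is convex in the ordinary linear structure of measures, the scalar function $\varepsilon\mapsto J(\varrho_\varepsilon)+\frac{1}{2\tau}W_2^2(\varrho_\varepsilon,\varrho^\tau_{(k)})$ is convex and minimal at $\varepsilon=0$, so its right derivative at $0$ is nonnegative. The whole proof then amounts to computing that right derivative term by term and exploiting the sign.

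The two elementary terms are immediate: $\mathcal V(\varrho_\varepsilon)$ is affine in $\varepsilon$ with derivative $\int V\,\dd(\tilde\varrho-\widehat\varrho)$, and for the transport term I would invoke Proposition \ref{conv_subdiff_first var}, which identifies the subdifferential of $\varrho\mapsto W_2^2(\varrho,\varrho^\tau_{(k)})$ with the set of Kantorovich potentials. Here Lemma \ref{pos and log} is used a second time: because $\widehat\varrho>0$ a.e., Theorem \ref{maintransport} guarantees that the potential $\varphi$ from $\widehat\varrho$ to $\varrho^\tau_{(k)}$ is unique up to constants, so the subdifferential is a singleton and the subgradient inequality upgrades to a genuine one-sided derivative $\frac1\tau\int\varphi\,\dd(\tilde\varrho-\widehat\varrho)$ (with the quadratic-cost normalization $T^\tau_k=\id-\nabla\varphi$, absorbing the factor $2$).

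The entropy term is the main obstacle. By convexity of $f(t)=t\log t$, the difference quotients $\varepsilon^{-1}\big(f(\varrho_\varepsilon)-f(\widehat\varrho)\big)$ decrease monotonically, as $\varepsilon\downarrow0$, to the pointwise limit $(\log\widehat\varrho+1)(\tilde\varrho-\widehat\varrho)$, which is finite a.e. precisely because $\widehat\varrho>0$ a.e. I would pass to the limit under the integral by monotone convergence: the quotient at $\varepsilon=1$ equals $f(\tilde\varrho)-f(\widehat\varrho)\in L^1$ (both entropies are finite and $\tilde\varrho$ is bounded) and dominates from above, while the limit is integrable since $\log\widehat\varrho\in L^1$ (the stronger conclusion of the Remark after Lemma \ref{pos and log}) and $\tilde\varrho,\widehat\varrho\in L^1$; the null set $\{\widehat\varrho=0\}$ contributes nothing. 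Justifying this interchange, and ruling out any $\infty-\infty$ ambiguity, is the real content; it yields $\frac{\dd}{\dd\varepsilon}\mathcal E(\varrho_\varepsilon)\big|_{0^+}=\int(\log\widehat\varrho+1)\,\dd(\tilde\varrho-\widehat\varrho)$.

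Assembling the three derivatives and using $\int 1\,\dd(\tilde\varrho-\widehat\varrho)=0$, the nonnegativity of the right derivative becomes
$$\int\Big(\log\widehat\varrho+V+\frac{\varphi}{\tau}\Big)\,\dd(\tilde\varrho-\widehat\varrho)\ge0\qquad\text{for every admissible }\tilde\varrho.$$
Hence $\widehat\varrho$ minimizes the linear functional $\tilde\varrho\mapsto\int h\,\tilde\varrho$ with $h:=\log\widehat\varrho+V+\varphi/\tau$ over probability densities; since such a minimum is attained only by concentrating on the set where $h$ equals its essential infimum, $\widehat\varrho$ is supported there, and $\widehat\varrho>0$ a.e. forces $h=\text{const}$ a.e., which is \eqref{inphik}. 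Finally, $V\in C^1$ on the compact $\Omega$ is Lipschitz and the quadratic-cost potential $\varphi$ is Lipschitz on the bounded domain, so $\log\widehat\varrho=\text{const}-V-\varphi/\tau$ is Lipschitz; and differentiating \eqref{inphik} with $T^\tau_k=\id-\nabla\varphi$ gives $\frac{\id-T^\tau_k}{\tau}=\frac{\nabla\varphi}{\tau}=-\nabla(\log\widehat\varrho+V)$, which is \eqref{vk}.
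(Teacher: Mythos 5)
Your proof is correct and follows essentially the same route as the paper: compute the first variation of each of the three terms (using Proposition \ref{conv_subdiff_first var} for the Wasserstein term and Lemma \ref{pos and log} for the strict positivity that gives uniqueness of the potential and validity of the condition a.e.), write the Euler--Lagrange condition, and differentiate to get \eqref{vk}. The only difference is that you spell out the details the paper explicitly defers to Chapter 8 of \cite{OTAM} --- the convex-combination perturbation, the monotone-convergence justification for differentiating the entropy term using $\log\widehat\varrho\in L^1$, and the passage from the one-sided inequality $\int h\,\dd(\tilde\varrho-\widehat\varrho)\ge 0$ to $h=\mathrm{const}$ a.e. --- and these are all handled correctly.
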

\begin{proof}
Take the optimal measure $\widehat\varrho:=\varrho^\tau_{(k+1)}$. We can check that all the functionals involved in the minimization admit a first variation at $\widehat\varrho$. For the linear term it is straightforward, and for the Wasserstein term we can apply Proposition \ref{conv_subdiff_first var}. The uniqueness of the Kantorovich potential is guaranteed by the fact that $\widehat\varrho$ is strictly positive a.e. on the domain $\Omega$ (Lemma \ref{pos and log}). For the entropy term, the integrability of $\log\widehat\varrho$ provided in the previous remark allows (computations are left to the reader) to differentiate under the integral sign for smooth perturbations.  

The first variation of $J$ is hence 
$$\frac{\delta J}{\delta\varrho}=f'(\varrho)+V=\log(\varrho)+1+V.$$
Using carefully (details are in Chapter 8 of \cite{OTAM}) the optimality conditions, we obtain Equation \eqref{inphik}, which is valid a.e. since $\widehat\varrho>0$. In particular, this implies that $\varrho^\tau_{(k+1)}$ is Lipschitz continuous, since we have
$$  \varrho^\tau_{(k+1)}(x)=\exp\left(C-V(x)-\frac{\varphi(x)}{\tau}\right).$$ 
Then, one differentiates and gets the equality 
$$\nabla\varphi=\frac{\id-T^\tau_k}{\tau}=-\nabla\left(\log(\varrho^\tau_{(k+1)})+V\right)\;\mbox{a.e.}$$
and this allows to conclude.
\end{proof}

\paragraph{{\bf Interpolation between time steps and uniform estimates}}

Let us collect some other tools
\begin{proposition}
For any $\tau>0$, the sequence of minimizers satisfies 
$$\sum_k\frac{W_2^2(\varrho^\tau_{(k+1)},\varrho^\tau_{(k)})}{\tau}\leq C:=2(J(\varrho_0)-\inf J).$$
\end{proposition}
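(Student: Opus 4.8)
The plan is to reproduce, word for word, the energy-dissipation argument already used in the Euclidean case to obtain \eqref{1sttimeH1}; the variational structure of \eqref{rhok+} makes this completely mechanical, so I do not expect any genuine obstacle. First I would use the fact that $\varrho^\tau_{(k+1)}$ is a minimizer of $\varrho\mapsto J(\varrho)+\frac{W_2^2(\varrho,\varrho^\tau_{(k)})}{2\tau}$, and test the optimality against the admissible competitor $\varrho=\varrho^\tau_{(k)}$. Since $W_2(\varrho^\tau_{(k)},\varrho^\tau_{(k)})=0$, this yields the one-step dissipation inequality
$$J(\varrho^\tau_{(k+1)})+\frac{W_2^2(\varrho^\tau_{(k+1)},\varrho^\tau_{(k)})}{2\tau}\leq J(\varrho^\tau_{(k)}),$$
which is exactly the Wasserstein analogue of \eqref{estimation compac} (and of \eqref{estimation compac-eucl}).

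Next I would rearrange this into $\frac{W_2^2(\varrho^\tau_{(k+1)},\varrho^\tau_{(k)})}{2\tau}\leq J(\varrho^\tau_{(k)})-J(\varrho^\tau_{(k+1)})$ and sum over $k$. The right-hand side telescopes, so that for every integer $N$ one gets
$$\sum_{k=0}^{N-1}\frac{W_2^2(\varrho^\tau_{(k+1)},\varrho^\tau_{(k)})}{2\tau}\leq J(\varrho^\tau_{(0)})-J(\varrho^\tau_{(N)}).$$
The key point is that, after the cancellation, the upper bound no longer depends on the number of terms: using $\varrho^\tau_{(0)}=\varrho_0$ and discarding the nonnegative quantity $-J(\varrho^\tau_{(N)})+\inf J\geq 0$ (here I invoke the previous proposition, which guarantees that $J$ is bounded below and attains its infimum over $\pical(\Omega)$), I obtain
$$\sum_{k=0}^{N-1}\frac{W_2^2(\varrho^\tau_{(k+1)},\varrho^\tau_{(k)})}{2\tau}\leq J(\varrho_0)-\inf J.$$

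Since this holds uniformly in $N$, it passes to the full series. Multiplying by $2$ gives precisely
$$\sum_k\frac{W_2^2(\varrho^\tau_{(k+1)},\varrho^\tau_{(k)})}{\tau}\leq 2\big(J(\varrho_0)-\inf J\big)=C,$$
with $C$ finite because $J(\varrho_0)<+\infty$ by hypothesis and $\inf J>-\infty$. The only ingredients are the optimality of each $\varrho^\tau_{(k+1)}$ and the lower bound on $J$, both already available, so there is no delicate step to overcome.
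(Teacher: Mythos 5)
Your proof is correct and is exactly the argument the paper intends: the paper's proof consists of the single sentence that this is the standard minimizing-movement estimate of \eqref{1sttimeH1}, i.e.\ comparing $\varrho^\tau_{(k+1)}$ with the competitor $\varrho^\tau_{(k)}$, telescoping the sum, and using that $J(\varrho_0)<+\infty$ and $\inf J>-\infty$. Your write-up simply spells out the details the paper leaves implicit, including the correct bookkeeping of the factor $2$.
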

\begin{proof}This is just the standard estimate in the minimizing movement scheme, corresponding to what we presented in \eqref{1sttimeH1} in the Euclidean case.\end{proof}

Let us define two interpolations between the measures $\varrho^\tau_{(k)}$.

With this time-discretized method, we have obtained, for each $\tau>0$, a sequence $(\varrho^\tau_{(k)})_k$. We can use it to build at least two interesting curves in the space of measures: 
\begin{itemize}
\item first we can define some piecewise constant curves, i.e. $\varrho^\tau_t:=\varrho^\tau_{(k+1)}$ for $t\in ]k\tau,(k+1)\tau]$; associated to this curve we also define the velocities $\vv^\tau_t=\vv^\tau_{(k+1)}$ for $t\in ]k\tau,(k+1)\tau]$, where $\vv^\tau_{(k)}$ is defined as in \eqref{vk}: $\vv^\tau_{(k)}=(\id-T^\tau_k)/\tau$, taking as $T^\tau_k$ the optimal transport from $\varrho^\tau_{(k+1)}$ to $\varrho^\tau_{(k)}$; we also define the momentum variable $E^\tau=\varrho^\tau \vv^\tau$;
\item then, we can also consider  the densities $\widetilde \varrho^\tau_t$ that interpolate the discrete values $(\varrho^\tau_{(k)})_k$ along geodesics:\index{geodesics!in $\WW_p$}
\begin{equation}
\widetilde \varrho^\tau_t = \left( \dfrac{k\tau-t}{\tau} \vv^\tau_{(k)} +\id \right)_{\#} \varrho^\tau_{(k)},\;\mbox{ for }t\in](k-1)\tau,k\tau[;
\end{equation}
the velocities $\widetilde \vv^{\tau}_t$ are defined so that $(\widetilde \varrho^\tau, \widetilde \vv^{\tau})$ satisfy the continuity equation and $||\widetilde \vv^{\tau}_t||_{L^2(\widetilde \varrho^\tau_t)}=|(\widetilde\varrho^\tau)'|(t)$. To do so, we take
$$\widetilde \vv^{\tau}_t=\vv^\tau_t\circ\left( (k\tau-t) \vv^\tau_{(k)} +\id \right)^{-1};$$ 
as before, we define a momentum variable: $\widetilde E_{\tau}= \widetilde \varrho^\tau \widetilde \vv^{\tau}$.
\end{itemize}

After these definitions we consider some a priori bounds on the curves and the velocities that we defined. We start from some estimates which are standard in the framework of Minimizing Movements.

Note that the velocity (i.e. metric derivative\index{metric derivative}) of $\widetilde\varrho^\tau$ is constant on each interval $]k\tau,(k+1)\tau[$ and equal to 
$$\frac{W_2(\varrho^\tau_{(k+1)},\varrho^\tau_{(k)})}{\tau}=\frac 1\tau\left(\int |\id-T^\tau_k|^2\dd\varrho^\tau_{(k+1)}\right)^{1/2}=||\vv_{k+1}^\tau||_{L^2(\varrho^\tau_{(k+1)})},$$ 
which gives
$$||\widetilde \vv^\tau_t||_{L^2(\widetilde\varrho^\tau_t)}=|(\widetilde\varrho^\tau)'|(t)=\frac{W_2(\varrho^\tau_{(k+1)},\varrho^\tau_{(k)})}{\tau}=|| \vv^\tau_t||_{L^2(\varrho^\tau_t)},$$
where we used the fact that the velocity field $\widetilde \vv^\tau$ has been chosen so that its $L^2$ norm equals the metric derivative of the curve $\widetilde\varrho^\tau$.

In particular we can obtain
\begin{eqnarray*}
|E^\tau|([0,T]\times\Omega)&=&\int_0^T \dd t \int_\Omega |\vv^\tau_t|\dd\varrho^\tau_t=\int_0^T ||\vv^\tau_t||_{L^1(\varrho^\tau_t)}\dd t\leq \int_0^T ||\vv^\tau_t||_{L^2(\varrho^\tau_t)}\dd t\\
&\leq& T^{1/2}\int_0^T ||\vv^\tau_t||^2_{L^2(\varrho^\tau_t)}\dd t=T^{1/2}\sum_k \tau \left(\frac{W_2(\varrho^\tau_{(k+1)},\varrho^\tau_{(k)})}{\tau}\right)^2\leq C.
\end{eqnarray*}
The estimate on $\widetilde E^\tau$ is completely analogous
$$|\widetilde E^\tau|([0,T]\times\Omega)=\int_0^T \dd t \int_\Omega |\widetilde \vv^\tau_t|\dd\widetilde\varrho^\tau_t\leq T^{1/2}\int_0^T ||\widetilde \vv^\tau_t||^2_{L^2(\widetilde\varrho^\tau_t)}\dd t
=T^{1/2}\sum_k \tau \left(\frac{W_2(\varrho^\tau_{(k+1)},\varrho^\tau_{(k)})}{\tau}\right)^2\leq C.$$
This gives compactness of $E^\tau$ and $\widetilde E^\tau$ in the space of vector measures on space-time, for the weak convergence. As far as  $\widetilde\varrho^\tau$ is concerned, we can obtain more than that. Consider the following estimate, for $s<t$
$$W_2(\widetilde\varrho^\tau_t,\widetilde\varrho^\tau_s)\leq \int_s^t |(\widetilde\varrho^\tau)'|(r)\dd r\leq (t-s)^{1/2}\left(\int_s^t  |(\widetilde\varrho^\tau)'|(r)^2\dd r\right)^{1/2}.$$
From the previous computations, we have again 
$$\int_0^T |(\widetilde\varrho^\tau)'|(r)^2\dd r=\sum_k \tau \left(\frac{W_2(\varrho^\tau_{(k+1)},\varrho^\tau_{(k)})}{\tau}\right)^2\leq C,$$
and this implies 
\begin{equation}\label{holder}
W_2(\widetilde\varrho^\tau_t,\widetilde\varrho^\tau_s)\leq C (t-s)^{1/2},
\end{equation}
which means that the curves $\widetilde\varrho^\tau$ are uniformly H\"older continuous. Since they are defined on $[0,T]$ and valued in $\WW_2(\Omega)$ which is compact, we can apply the Ascoli Arzel\`a Theorem. This implies that, up to subsequences, we have
$$
E^\tau\deb E\,\mbox{ in }\mathcal{M}^d([0,T]\times\Omega),\;\widetilde E^\tau\deb \widetilde E\,\mbox{ in }\mathcal{M}^d([0,T]\times\Omega);\quad
\widetilde\varrho^\tau\to\varrho\mbox{ uniformly for the $W_2$ distance}.
$$
The limit curve $\varrho$, from the uniform bounds on $\widetilde\varrho^\tau$, is both $\frac 12$-H\"older continuous and absolutely continuous in $\mathbb W_2$.
As far as the curves $\varrho^\tau$ are concerned, they also converge uniformly to the same curve $\varrho$, since $W_2(\varrho^\tau_t,\widetilde\varrho^\tau_t)\leq C\sqrt{\tau}$ (a consequence of \eqref{holder}, of the fact that $\widetilde\varrho^\tau=\varrho^\tau$ on the points of the form $k\tau$ and of the fact that $\varrho^\tau$ is constant on each interval $]k\tau,(k+1)\tau]$).

Let us now prove that $\widetilde E = E$.

\begin{lemma}
Suppose that we have two families of vector measures $E^\tau$ and $\widetilde E^\tau$ such that 
\begin{itemize}
\item $\widetilde E^\tau= \widetilde\varrho^\tau \widetilde \vv^\tau$; $E^\tau=\varrho^\tau \vv^\tau$;
\item $\widetilde \vv^{\tau}_t=\vv^\tau_t\circ\left( (k\tau-t) \vv^\tau_{(k)} +\id \right)^{-1}$; $\widetilde\varrho^\tau=\left( (k\tau-t) \vv^\tau_{(k)} +\id \right)_\#\varrho^\tau$;
\item $\int\!\!\int |\vv^\tau|^2\dd\varrho^\tau\leq C$ (with $C$ independent of $\tau$);
\item $E^\tau\deb E$ and $\widetilde E^\tau\deb \widetilde E$ as $\tau\to 0$
\end{itemize}
Then  $\widetilde E = E$.
\end{lemma}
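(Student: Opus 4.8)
The plan is to show that $\widetilde E^\tau-E^\tau\deb 0$ as $\tau\to 0$; since $E^\tau\deb E$ and $\widetilde E^\tau\deb \widetilde E$ by hypothesis, this immediately forces $\widetilde E=E$. To test the difference it suffices to work with a field $\xi\in C^1([0,T]\times\Omega;\R^d)$, since such fields identify a measure on the compact set $[0,T]\times\Omega$. The central tool is a change of variables performed interval by interval: on $](k-1)\tau,k\tau[$ the geodesic interpolation is the image of $\varrho^\tau_t$ under the map $S^\tau_t:=(k\tau-t)\vv^\tau_{(k)}+\id$, i.e. $\widetilde\varrho^\tau_t=(S^\tau_t)_\#\varrho^\tau_t$, while the velocities are tied by the very relation that defines $\widetilde\vv^\tau$, namely $\widetilde\vv^\tau_t\circ S^\tau_t=\vv^\tau_t$. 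I stress that only this last identity is needed below, not the invertibility of $S^\tau_t$.

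First I would apply the pushforward formula to the function $x\mapsto \xi(t,x)\cdot\widetilde\vv^\tau_t(x)$, which yields
$$\int_\Omega \xi(t,x)\cdot\dd\widetilde E^\tau_t=\int_\Omega \xi\big(t,S^\tau_t(y)\big)\cdot\vv^\tau_t(y)\,\dd\varrho^\tau_t(y),$$
to be compared with $\int_\Omega\xi(t,x)\cdot\dd E^\tau_t=\int_\Omega\xi(t,y)\cdot\vv^\tau_t(y)\,\dd\varrho^\tau_t(y)$. Subtracting and integrating in time gives the clean expression
$$\langle\widetilde E^\tau-E^\tau,\xi\rangle=\int_0^T\!\!\int_\Omega\big(\xi(t,S^\tau_t(y))-\xi(t,y)\big)\cdot\vv^\tau_t(y)\,\dd\varrho^\tau_t(y)\,\dd t.$$

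The key observation is that on the interval $](k-1)\tau,k\tau[$ the piecewise-constant velocity $\vv^\tau_t$ and the displacement map $S^\tau_t$ are built from the same $\vv^\tau_{(k)}$, so that $|S^\tau_t(y)-y|=|k\tau-t|\,|\vv^\tau_{(k)}(y)|\leq \tau\,|\vv^\tau_t(y)|$. Using the Lipschitz bound of $\xi$ in the space variable and then the uniform energy estimate $\int\!\!\int|\vv^\tau|^2\dd\varrho^\tau\leq C$, I would conclude
$$|\langle\widetilde E^\tau-E^\tau,\xi\rangle|\leq \Lip(\xi)\,\tau\int_0^T\!\!\int_\Omega|\vv^\tau_t(y)|^2\,\dd\varrho^\tau_t(y)\,\dd t\leq C\,\Lip(\xi)\,\tau\xrightarrow[\tau\to 0]{}0.$$
Hence $\langle\widetilde E,\xi\rangle=\langle E,\xi\rangle$ for all such $\xi$, giving $\widetilde E=E$.

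The computation is essentially routine once the setup is in place; the step that needs the most care is the bookkeeping of indices, ensuring that the velocity field entering $S^\tau_t$ is exactly the piecewise-constant velocity $\vv^\tau_t$ on the same subinterval (this is what makes the bound $|S^\tau_t(y)-y||\vv^\tau_t(y)|\leq\tau|\vv^\tau_t(y)|^2$ close), together with the observation that the change-of-variables identity relies only on $\widetilde\vv^\tau_t\circ S^\tau_t=\vv^\tau_t$ and on the pushforward relation, so no inversion of $S^\tau_t$ is required.
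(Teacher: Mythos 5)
Your proposal is correct and follows essentially the same route as the paper: test against a Lipschitz (or $C^1$) vector field, rewrite $\int \xi\cdot\dd\widetilde E^\tau$ via the pushforward relation as an integral against $\varrho^\tau$, and bound the difference by $\Lip(\xi)\,\tau\int\!\!\int|\vv^\tau|^2\dd\varrho^\tau\leq C\tau$. The remark that only the identity $\widetilde\vv^\tau_t\circ S^\tau_t=\vv^\tau_t$ and the image-measure formula are needed (no inversion of $S^\tau_t$) is a nice clarification but does not change the argument.
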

\begin{proof}
It is sufficient to fix a Lipschitz function $f:[0,T]\times\Omega\to\R^d$ and to prove $\int f\cdot \dd E=\int f\cdot \dd\widetilde E$. To do that, we write 
$$\int f\cdot \dd\widetilde E^\tau=\int_0^T\dd t\int_\Omega f\cdot \widetilde \vv^{\tau}_t \dd\widetilde\varrho^\tau=\int_0^T\dd t\int_\Omega f\circ\left( (k\tau-t) \vv^\tau +\id \right)\cdot \vv^{\tau}_t \dd\varrho^\tau,$$
which implies
$$
\left|\int\! f\cdot \dd\widetilde E^\tau-\!\int \!f\cdot \dd E^\tau\right|\leq \int_0^T\! \dd t\int_\Omega \left|f\circ\left( (k\tau-t) \vv^\tau\! \!+\!\id \right)-f\right| |\vv^{\tau}_t| \dd\varrho^\tau
\leq \Lip(f)\tau\int_0^T\int_\Omega |\vv^{\tau}_t|^2 \dd\varrho^\tau\leq C\tau.$$
This estimate proves that the limit of $\int f\cdot \dd \widetilde E^\tau$ and $\int f\cdot \dd E^\tau$ is the same, i.e. $E=\widetilde E$.
\end{proof}

\paragraph{{\bf Relation between $\varrho$ and $E$}}

We can obtain the following:
\begin{proposition}
The pair $(\varrho,E)$ satisfies, in distributional sense 
$$\partial_t\varrho+\nabla\cdot E=0, \quad E=-\nabla\varrho-\varrho\nabla V,$$
with no-flux boundary conditions on $\partial\Omega$.
In particular we have found a solution to\index{heat equation}
$$\begin{cases}\partial_t\varrho-\Delta\varrho-\nabla\cdot (\varrho\nabla V)=0,\\
(\nabla\varrho+\varrho\nabla V)\cdot \mathbf{n}=0,\\
			\varrho(0)=\varrho_0,\end{cases}$$		
where the initial datum is to be intended in the following sense: the curve $t\mapsto \varrho_t$ is (absolutely) continuous in $\WW_2$, and its initial value is $\varrho_0$.		
\end{proposition}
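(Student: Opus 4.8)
The plan is to pass to the limit $\tau\to 0$ in the two relations that already hold at the discrete level, exploiting the two interpolations for two different purposes: the continuity equation will come from the geodesic interpolation $(\widetilde\varrho^\tau,\widetilde E^\tau)$, which satisfies it exactly by construction, while the constitutive relation $E=-\nabla\varrho-\varrho\nabla V$ will come from the piecewise constant interpolation $(\varrho^\tau,E^\tau)$, where the optimality conditions of Proposition \ref{opticond} are available. The two pieces are then glued together precisely by the equality $E=\widetilde E$ proven in the previous lemma, together with the fact that both $\varrho^\tau$ and $\widetilde\varrho^\tau$ converge uniformly (for $W_2$) to the same curve $\varrho$.

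First I would treat the continuity equation. By the very choice of $\widetilde\vv^\tau$ (in the spirit of Theorem \ref{curves and PDE}), the pair $(\widetilde\varrho^\tau,\widetilde E^\tau)$ solves $\partial_t\widetilde\varrho^\tau+\nabla\cdot\widetilde E^\tau=0$ distributionally. Testing against $\phi\in C^1([0,T]\times\Omega)$ with $\phi(T,\cdot)=0$, and not requiring $\phi$ to vanish on $\partial\Omega$, gives
$$\int_0^T\int_\Omega \partial_t\phi\,\dd\widetilde\varrho^\tau_t\,\dd t+\int_0^T\int_\Omega \nabla\phi\cdot \dd\widetilde E^\tau+\int_\Omega\phi(0,\cdot)\,\dd\varrho_0=0,$$
where the absence of any boundary term over $\partial\Omega$ reflects that the transport maps keep the mass inside $\Omega$. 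The uniform $W_2$-convergence $\widetilde\varrho^\tau\to\varrho$ (with $\widetilde\varrho^\tau(0)=\varrho_0$) handles the first and last terms, while $\widetilde E^\tau\deb\widetilde E=E$ handles the middle one; the limit is $\partial_t\varrho+\nabla\cdot E=0$ with initial datum $\varrho_0$, the unrestricted boundary behaviour of $\phi$ encoding the natural no-flux condition $E\cdot\nn=0$.

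Next I would establish the constitutive relation. On each interval Proposition \ref{opticond} gives $\vv^\tau_t=-\nabla\big(\log\varrho^\tau_t+V\big)$, so that, multiplying by $\varrho^\tau_t$ and using $\varrho^\tau_t\nabla\log\varrho^\tau_t=\nabla\varrho^\tau_t$ (legitimate because $\varrho^\tau_t$ is Lipschitz and strictly positive), one gets the exact discrete identity $E^\tau=\varrho^\tau\vv^\tau=-\nabla\varrho^\tau-\varrho^\tau\nabla V$. Now I pass to the limit term by term: the left-hand side converges weakly to $E$; the uniform $W_2$-convergence $\varrho^\tau\to\varrho$ yields $\varrho^\tau\deb\varrho$ as measures on $[0,T]\times\Omega$, whence $\nabla\varrho^\tau\deb\nabla\varrho$ in the sense of distributions (distributional differentiation commutes with weak convergence, so no effort is needed here); and, since $\nabla V$ is continuous and bounded on the compact set $\Omega$, one has $\varrho^\tau\nabla V\deb\varrho\nabla V$. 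This gives $E=-\nabla\varrho-\varrho\nabla V$ distributionally, which in particular shows $\nabla\varrho$ is a measure and makes the identity meaningful.

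Finally, substituting $E=-\nabla\varrho-\varrho\nabla V$ into $\partial_t\varrho+\nabla\cdot E=0$ produces exactly $\partial_t\varrho-\Delta\varrho-\nabla\cdot(\varrho\nabla V)=0$ with $(\nabla\varrho+\varrho\nabla V)\cdot\nn=0$, while the Hölder estimate \eqref{holder} guarantees that $t\mapsto\varrho_t$ is continuous in $\WW_2$ and attains $\varrho_0$ at $t=0$, giving the stated meaning of the initial datum. The step I expect to require the most care is exactly the reconciliation of the two interpolations: the continuity equation is available only for $(\widetilde\varrho^\tau,\widetilde E^\tau)$ and the constitutive relation only for $(\varrho^\tau,E^\tau)$, so the whole argument hinges on having recorded beforehand both $\varrho^\tau,\widetilde\varrho^\tau\to\varrho$ and $E=\widetilde E$; it is this identification that ensures the limiting momentum appearing in the two equations is one and the same object, and that the nonlinear entropy term — already linearised into the exact gradient $\nabla\varrho^\tau$ at the discrete level — does not create any spurious defect in the limit.
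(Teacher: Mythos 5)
Your proposal is correct and follows essentially the same route as the paper: the continuity equation is obtained by passing to the limit in the geodesic interpolation $(\widetilde\varrho^\tau,\widetilde E^\tau)$, the relation $E=-\nabla\varrho-\varrho\nabla V$ is obtained from the piecewise constant interpolation via the optimality conditions and an integration by parts against test functions (your remark that distributional differentiation commutes with weak convergence is exactly the paper's $\int f\cdot\dd E^\tau=\int(\nabla\cdot f)\,\dd\varrho^\tau-\int f\cdot\nabla V\,\varrho^\tau$ computation), and the two are glued by the identity $E=\widetilde E$ together with the common uniform limit $\varrho$. The paper's proof contains nothing beyond what you wrote.
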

\begin{proof}
First, consider the weak convergence $(\widetilde\varrho^\tau,\widetilde E^\tau)\deb (\varrho,E)$ (which is a consequence of $\widetilde E=E$). The continuity equation $\partial_t\widetilde\varrho^\tau+\nabla\cdot \widetilde E^\tau=0$ is satisfied by construction in the sense of distributions, and this passes to the limit. 
Hence, $\partial_t\varrho+\nabla\cdot E=0$.
The continuity in $\WW_2$ and the initial datum pass to the limit because of the uniform $C^{0,1/2}$ bound in \eqref{holder}. 

Then, use the convergence $(\varrho^\tau,E^\tau)\deb (\varrho,E)$. Actually, using the optimality conditions of  Proposition \ref{opticond} and the definition of $E^\tau=\vv^\tau\varrho^\tau$, we have, for each $\tau>0$, $E^\tau=-\nabla\varrho^\tau-\varrho^\tau \nabla V$ (note that $\varrho^\tau$ is Lipschitz continuous for fixed $\tau>0$, which allows to write this equality, exploiting in particular $\nabla \varrho^\tau= \varrho^\tau\nabla (\log(\varrho^\tau))$, which would have no meaning for less regular measures; this regularity could be, of course, lost in the limit $\tau\to 0$). It is not difficult to pass this condition to the limit either (but now $\nabla\varrho$ will have to be intended in the sense of distributions). Take $f\in C^1_c(]0,T[\times \Omega;\R^d)$ and test:
$$\int f\cdot \dd E^\tau=-\int f\cdot \nabla\varrho^\tau-\int f\cdot  \nabla V\varrho^\tau=\int \nabla\cdot f \dd\varrho^\tau-\int f\cdot  \nabla V\varrho^\tau.$$
These terms pass to the limit as $\varrho^\tau\deb \varrho$, using the assumption $V\in C^1$, since all the test functions above are continuous. This gives $\int f\cdot \dd E=\int (\nabla\cdot f )\dd\varrho-\int f\cdot  \nabla V\,\dd\varrho,$ which implies $E=-\nabla\varrho-\varrho \nabla V$.
%
\end{proof}

\subsection{Other gradient-flow PDEs}\label{8.4.2}

We saw in the previous section the example, and a detailed analysis, of the Fokker-Planck equation. The main reason to choose such example is its simplicity, because it is a linear equation. This allows easily to pass to the limit all the terms in the relation between $E$ and $\varrho$. Yet, many other important equations can be obtained as gradient flows in $\WW_2$, choosing other functionals. We will see some of them here, without entering into details of the proof. We stress that handling non-linear terms is often difficult and requires ad-hoc estimates. We will discuss some of the ideas that one should apply. Note on the other hand that, if one uses the abstract theory of \cite{AmbGigSav}, there is no need to produce these ad-hoc estimates: after developing a general (and hard) theory for general metric spaces, the second part of  \cite{AmbGigSav} explains which are the curves that one finds as gradient flows in $\WW_2$, with the relation between the velocity field $\vv$ and the derivatives (with an ad-hoc notion of subdifferential in the Wasserstein space) of the functional $F$. This automatically gives the desired result as a part of a larger theory. 

We will discuss five classes of PDEs in this section: the {\it porous media equation}, the {\it Keller-Segel equation}, more general {\it diffusion, advection and aggregation} equations, a model for {\it crowd motion} with density constraints, and the {\it flow of the squared sliced Wasserstein distance $\mathrm{S}W_2^2$}.

\paragraph{{\bf Porous Media Equation}}\index{porous media}\index{fast diffusion}This equation models the diffusion of a substance into a material whose properties are different than the void, and which slows down the diffusion. If one considers the case of particles which are advected by a potential and subject to this kind of diffusion, the PDE reads
$$\partial_t \varrho-\Delta(\varrho^m)-\nabla\cdot(\varrho\nabla V)=0,$$
for an exponent $m>1$. One can formally check that this is the equation of the gradient flow of the energy
$$F(\varrho)=\frac{1}{m-1}\int \varrho^m(x)\,\dd x+\int V(x)\varrho(x)\,\dd x$$
(set to $+\infty$ for $\varrho\notin L^m$).
Indeed, the first variation of the first part of the functional is $\frac{m}{m-1}\varrho^{m-1}$, and $\varrho\nabla\left(\frac{m}{m-1}\varrho^{m-1}\right)=m\varrho\cdot\varrho^{m-2}\nabla\varrho=\nabla(\varrho^m)$. 

Note that, in the discrete step $\min_\varrho\; F(\varrho)+\frac{W_2^2(\varrho,\varrho_0)}{2\tau}$, the solution $\varrho$ satisfies 
$$\begin{cases}\frac{m}{m-1}\varrho^{m-1}+V+\frac{\varphi}{\tau}=C&\varrho-\mbox{a.e.}\\ \frac{m}{m-1}\varrho^{m-1}+V+\frac{\varphi}{\tau}\geq C&\mbox{on }\{\varrho=0\}.\end{cases}$$
This allows to express $\varrho^{m-1}=\frac{m-1}{m}(C-V-\varphi/\tau)_+$ and implies that $\varrho$ is compactly supported if $\varrho_0$ is compactly supported, as soon as $V$ has some growth conditions. This fact contradicts the usual infinite propagation speed that one finds in linear diffusion models (Heat and Fokker-Planck equation).

The above analysis works in the case $m>1$: the fact that the usual Fokker-Planck equation can be obtained for $m\to 1$ can be seen in the following way: nothing changes if we define $F$ via $F(\varrho)=\frac{1}{m-1}\int (\varrho^m(x)-\varrho(x))\,\dd x+\int V(x)\varrho(x)\,\dd x,$ since the mass $\int \varrho(x)\,\dd x=1$ is a given constant. Yet, then it is is easy to guess the limit, since $$\lim_{m\to 1}\frac{\varrho^m-\varrho}{m-1}=\varrho\log\varrho,$$
which provides the entropy that we already used for the Heat equation.

It is also interesting to consider the case $m<1$: the function $\varrho^m-\varrho$ is no longer convex, but it is concave and the negative coefficient $1/(m-1)$ makes it a convex function. Unfortunately, it is not superlinear at infinity, which makes it more difficult to handle. But for $m\geq 1-1/d$ the functional $F$ is still displacement convex. The PDE that we get as a gradient flow is called {\it Fast diffusion} equation, and it has different (and opposite) properties in terms of diffusion rate than the porous media one.

From a technical point of view, proving compactness of the minimizing movement scheme for these equations is not very easy, since one needs to pass to the limit the non-linear term $\Delta(\varrho^m)$, which means proving strong convergence on $\varrho$ instead of weak convergence. The main ingredient is a sort of $H^1$ bound in space, which comes from the fact that we have 
\begin{equation}\label{H^1 bound porous}
\int_0^T\int_\Omega |\nabla (\varrho^{m-1/2})|^2\,\dd x\dd t\approx \int_0^T\int_\Omega \varrho\frac{|\nabla\varphi|^2}{\tau^2}\,\dd x\dd t=\int_0^T |\varrho'|(t)^2\dd t\leq C
\end{equation}
 (but one has to deal with the fact that this is not a full $H^1$ bound, and the behavior in time has to be controlled, via some variants of the classical Aubin-Lions Lemma, \cite{Aubin}).

\paragraph{{\bf Diffusion, advection and aggregation}} Consider a more general case where the movement is advected by a potential determined by the superposition of many potentials, each created by one particle. For instance, given a function $W:\R^d\to\R$, the particle located at $x$ produces a potential $W(\cdot-x)$ and, globally, the potential is given by $V(y)=\int W(y-x)\dd\varrho(x)$, i.e. $V=W*\varrho$. The equation, if every particle follows $-\nabla V$ is 
$$\partial_t\varrho-\nabla\cdot(\varrho \left((\nabla W)*\varrho\right))=0,$$
where we used $\nabla(W*\varrho)=(\nabla W)*\varrho$. If $W$ is even (i.e. the interaction between $x$ and $y$ is the same as between $y$ and $x$), then this is the gradient flow of the functional
$$F(\varrho)=\frac 12 \int\!\!\int W(x-y)\dd\varrho(x)\dd\varrho(y).$$

When $W$ is convex, for instance in the quadratic case $\int\!\!\int |x-y|^2\dd\varrho(x)\dd\varrho(y)$, this gives raise to a general aggregation behavior of the particles, and as $t\to\infty$ one expects $\varrho_t\deb\delta_{x_0}$ (the point $x_0$ depending on the initial datum $\varrho_0$: in the quadratic example above it is the barycenter of $\varrho_0$).  If $W$ is not smooth enough, the aggregation into a unique point can also occur in finite time, see \cite{figalli exo}.

Note that these equations are both non-linear (the term in the divergence is quadratic in $\varrho$) and non-local. It is rare to see these non-local aggregation terms alone in the equation, as they are often coupled with diffusion or other terms. This is why we do not provide specific references except \cite{figalli exo}. We also note that from the technical point of view this kind of nonlinearity is much more compact than the previous ones, since the convolution operator transforms weak convergence into strong one, provided $W$ is regular enough.

Most often, the  above aggregation energy is studied together with an internal energy and a confining potential energy, using the functional
$$F(\varrho)=\int f(\varrho(x))\,\dd x+\int V(x)\,\dd\varrho(x)+\frac 12 \int\!\!\int W(x-y)\dd\varrho(x)\dd\varrho(y).$$
This gives the equation
$$\partial_t\varrho-\nabla\cdot\left(\varrho \left[\nabla(f'(\varrho))+\nabla V+(\nabla W)*\varrho\right]\right)=0.$$
Among the mathematical interest for this family of equations, we stress that they are those where more results (in termes of stability, and convergence to equilibrium) can be proven, due to the fact that conditions to guarantee that $F$ is displacement convex are well-known (Section \ref{geodconvW2}). See in particular \cite{CarMcCvillani06003,CarMcCvillani06006} for physical considerations and convergence results on this equation.

\paragraph{{\bf Keller-Segel}}

An interesting model in mathematical biology (see \cite{KelSeg,KelSeg2} for the original modeling) is the following: a population $\varrho$ of bacteria evolves in time, following diffusion and advection by a potential. The potential is given by the concentration $u$ of a chemo-attractant nutrient substance, produced by the bacteria themselves. This kind of phenomenon is also known under the name of {\it chemotaxis}. More precisely, bacteria move (with diffusion) in the direction where they find more nutrient, i.e. in the direction of $\nabla u$, where the distribution of $u$ depends on their density $\varrho$. The easiest model uses linear diffusion and supposes that the distribution of $u$ is related to $\varrho$ by the condition
$-\Delta u =\varrho$, with Dirichlet boundary conditions $u=0$ on $\partial\Omega$.
This gives the system 
$$\begin{cases} \partial_t\varrho+\alpha\nabla\cdot(\varrho\nabla u)-\Delta\varrho=0,\\
			-\Delta u =\varrho,\\
			u=0\mbox{ on }\partial\Omega, \,\varrho(0,\cdot)=\varrho_0,\,\partial_\nn\varrho-\varrho\partial_\nn u=0\mbox{ on }\partial\Omega.\end{cases}$$
The parameter $\alpha$ stands for the attraction intensity of bacteria towards the chemo-attractant. By scaling, instead of using probability measures $\varrho\in\pical(\Omega)$ one can set $\alpha=1$ and play on the mass of $\varrho$ (indeed, the non-linearity is only in the term $\varrho\nabla u$, which is quadratic in $\varrho$).  

Alternative equations can be considered for $u$, such as $-\Delta u+u =\varrho$ with Neumann boundary conditions. On the contrary, the boundary conditions on $\varrho$ must be of no-flux type, to guarantee conservation of the mass. This system can also be set in the whole space, with suitable decay conditions at infinity. Note also that often the PDE condition defining $u$ as the solution of a Poisson equation is replaced, when $\Omega=\R^2$, by the explicit formula
\begin{equation}\label{KSlog}
u(x)=-\frac{1}{2\pi}\int_{\R^2} \log(|x-y|)\varrho(y)\,\dd y.
\end{equation}
There is some confusion in higher dimension, as the very same formula does not hold for the Poisson equation (the logarithmic kernel should indeed be replaced by the corresponding Green function), and there two alternatives: either keep the fact that $u$ solves $-\Delta u=\varrho$, or the fact that it derives from $\varrho$ through \eqref{KSlog}. 

One can see that this equation is the gradient flow of the functional
$$F(\varrho)=\int_\Omega \varrho\log\varrho-\frac 12 \int_\Omega|\nabla u_\varrho|^2,\quad\mbox{where $u_\varrho\in H^1_0(\Omega)$ solves }-\Delta u_\varrho=\varrho.$$
Indeed, the only non-standard computation is that of the first variation of the Dirichlet term $- \frac 12 \int|\nabla u_\varrho|^2$. Suppose $\varrho_\ve=\varrho+\ve\chi$ and set $u_{\varrho+\ve\chi}=u_\varrho+\ve u_\chi$. Then
$$\frac{d}{d\ve}\left(- \frac 12 \int|\nabla u_{\varrho+\ve\chi}|^2\right)_{|\ve=0}=-\int\nabla u_\varrho\cdot\nabla u_\chi=\int u_\varrho\Delta u_\chi=-\int u_\varrho \chi.$$
It is interesting to note that this Dirichlet term is indeed (up to the coefficient $-1/2$) the square of the $H^{-1}$ norm of $\varrho$, since $||u||_{H^1_0}=||\nabla u||_{L^2}=||\varrho||_{H^{-1}}$. We will call it the $H^{-1}$ term.\index{$H^{-1}$ norm}

It is also possible to replace linear diffusion with non-linear diffusion of porous media type, replacing the entropy $\int\varrho\log\varrho$ with a power-like energy $\int \varrho^m$.

Note that the variational problem $\min F(\varrho)+\frac{W_2^2(\varrho,\varrho_0)}{2\tau}$ requires some assumption to admit existence of minimizers, as unfortunately the Dirichlet term has the wrong sign. In particular, it would be possible that the infimum is $-\infty$, or that the energy is not l.s.c. because of the negative sign.

When we use non-linear diffusion with $m>2$ the existence of a solution is quite easy. Sophisticated functional inequalities allow to handle smaller exponents, and even the linear diffusion case in dimension 2, provided $\alpha\leq 8\pi$. We refer to \cite{BlaCalCarr} and to the references therein for details on the analysis of this equation. Some of the technical difficulties are similar to those of the porous media equation, when passing to the limit non-linear terms. In  \cite{BlaCalCarr}, the $H^{-1}$ term is treated in terms of its logarithmic kernel, and ad-hoc variables symmetrization tricks are used. Note however that the nonlinear diffusion case is easier, as $L^m$ bounds on $\varrho$ translate into $W^{2,m}$ bounds on $u$, and hence strong compactness for $\nabla u$.

We also remark that the above model, coupling a parabolic equation on $\varrho$ and an elliptic one on $u$, implicitly assumes that the configuration of the chemo-attractant instantaneously follows that of $\varrho$. More sophisticated models can be expressed in terms of the so-called {\it parabolic-parabolic Keller-Segel} equation, in the form 
$$\begin{cases} \partial_t\varrho+\alpha\nabla\cdot(\varrho\nabla u)-\Delta\varrho=0,\\
			\partial_tu-\Delta u =\varrho,\\
			u=0\mbox{ on }\partial\Omega, \,\varrho(0,\cdot)=\varrho_0,\,\partial_\nn\varrho-\varrho\partial_\nn u=0\mbox{ on }\partial\Omega.\end{cases}$$
or other variants with different boundary conditions. This equation can also be studied as a gradient flow in two variables, using the distance $W_2$ on $\varrho$ and the $L^2$ distance on $u$; see \cite{BCKKLL}.

			\paragraph{{\bf Crowd motion}}
			
			 The theory of Wasserstein gradient flows has interestingly been applied to the study of a continuous model of crowd movement under density constraints.

Let us explain the modeling, starting from the discrete case. Suppose that we have a population of particles such that each of them, if alone, would follow its own velocity $\uu$ (which could a priori depend on time, position, on the particle itself\dots).  Yet, these particles are modeled by rigid disks that cannot overlap, hence, the actual velocity cannot always be $\uu$, in particular if $\uu$ tends to concentrate the masses. We will call $\vv$ the actual velocity of each particle, and the main assumption of the model is that $\vv=P_{\mathrm{adm}(q)}(\uu)$, where $q$ is the particle configuration, $\mathrm{adm}(q)$ is the set of velocities that do not induce (for an infinitesimal time) overlapping starting from the configuration $q$, and $P_{\mathrm{adm}(q)}$ is the projection on this set.

The simplest example is the one where every particle is a disk with the same radius $R$ and center located at $q_i$. In this case we define the admissible set of configurations $K$ through
$$ K:=\{q=(q_i)_i\in\Omega^N\;:\;|q_i-q_j|\geq 2R\;\mbox{ for all }i\neq j\}.$$
In this way the set of admissible velocities is easily seen to be
$$\mathrm{adm}(q)=\{\vv=(\vv_i)_i\;:\; (\vv_i-\vv_j)\cdot (q_i-q_j)\geq 0\,\mbox{ for all $(i,j)$ with }|q_i-q_j|=2R \}.$$
The evolution equation which has to be solved to follow the motion of $q$ is then 
\begin{equation}\label{q'}
q'(t)=P_{\mathrm{adm}(q(t))}\uu(t)
\end{equation}
(with $q(0)$ given).
Equation \eqref{q'}, not easy from a mathematical point of view, was studied by Maury and Venel in \cite{crowd1,crowd2}. 

We are now interested in the simplest continuous counterpart of this microscopic model (without pretending that it is any kind of homogenized limit of the discrete case, but only an easy re-formulation in a density setting). In this case the particles population will be described by a probability density $\varrho\in\pical(\Omega)$, the constraint becomes a density constraint $\varrho\leq 1$ (we define the set $K=\{\varrho\in\pical(\Omega)\,:\,\varrho\leq 1\}$\index{$L^\infty$ density constraints}), the set of admissible velocities will be described by the sign of the divergence on the saturated region $\{\varrho=1\}$: $\mathrm{adm}(\varrho)=\big\{ \vv:\Omega\to\R^d\;:\;\nabla\cdot \vv\geq 0\;\mbox{ on }\{\varrho=1\}\big\}$;
 we will consider a projection $P$, which will be either  the projection in $L^2(\lcal^d)$ or in $L^2(\varrho)$ (this will turn out to be the same, since the only relevant zone is $\{\varrho=1\}$). Finally, we
solve the equation 
  \begin{equation}\label{cont con proj}
\partial_t\varrho_t+\nabla\cdot\big( \varrho_t \,\big(P_{\mathrm{adm}(\varrho_t)} \uu_t\big)\big)=0.
  \end{equation}
    
  The main difficulty is the fact that the vector field $\vv=P_{\mathrm{adm}(\varrho_t)}\uu_t$ is neither regular (since it is obtained as an $L^2$ projection, and may only be expected to be $L^2$ a priori), nor it depends regularly on $\varrho$ (it is very sensitive to small changes in the values of $\varrho$:  passing from a density $1$ to a density $1-\ve$ completely modifies the saturated zone, and hence the admissible set of velocities and the projection onto it).
  
  In \cite{MauRouSan} these difficulties have been overpassed in the case $\uu=-\nabla D$ (where $D:\Omega\to\R$ is a given Lipschitz function) and the existence of a solution (with numerical simulations) is proven via a gradient flow method. Indeed, \eqref{cont con proj} turns out to be the gradient flow in $\WW_2$ of the energy
  $$F(\varrho)=\begin{cases}\int D\,\dd\varrho&\mbox{if }\varrho\in K;\\
  					+\infty&\mbox{if }\varrho\notin K.\end{cases}$$
  
We do not enter into the details of the study of this equation, but we just make a little bit more precise the definitions above. Actually, instead of considering the divergence of vector fields which are only supposed to be $L^2$, it is more convenient to give a better description of $\mathrm{adm}(\varrho)$ by duality: 
  $$\mathrm{adm}(\varrho)=\left\{ \vv\in L^2(\varrho)\;:\;\int \vv\cdot\nabla p\leq 0\quad\forall p\in H^1(\Omega)\,:\,p\geq 0,\, p(1-\varrho)=0\right\}.$$
  
 In this way we characterize $\vv=P_{\mathrm{adm}(\varrho)}(u)$ through 
   \begin{gather*}
  \uu=\vv+\nabla p,\quad \vv\in \mathrm{adm}(\varrho),\quad \int \vv\cdot\nabla p = 0,\\
  p\in \mathrm{press}(\varrho):=\{p\in H^1(\Omega),\, p\geq 0,\, p(1-\varrho)=0\},
  \end{gather*}
where $\mathrm{press}(\varrho)$ is the space of functions $p$ used as test functions in the dual definition of $\mathrm{adm}(\varrho)$, which play the role of a pressure affecting the movement. The two cones $\nabla \mathrm{press}(\varrho)$ (defined as the set of gradients of elements of $\mathrm{press}(\varrho)$) and $\mathrm{adm}(\varrho)$ are in duality for the $L^2$ scalar product (i.e. one is defined as the set of vectors which make a negative scalar product  with all the elements of the other). This allows for an orthogonal decomposition $\uu_t=\vv_t+\nabla p_t$, and gives the alternative expression of Equation \eqref{cont con proj}, i.e.\index{pressure}
  \begin{equation}\label{con press}
  \begin{cases}\partial_t\varrho_t+\nabla\cdot\big( \varrho_t (\uu_t-\nabla p_t)\big)=0,\\
  0\leq \varrho\leq 1,\,p\geq 0,\,p(1-\varrho)=0.\end{cases}
  \end{equation}
  
  More details can be found in \cite{MauRouSan,aude phd,MauRouSanVen}. In particular, in \cite{MauRouSan} it is explained how to handle the nonlinearities when passing to the limit. Two sources of nonlinearity are observed: the term $\varrho\nabla p$ is easy to consider, since it is actually equal to $\nabla p$ (as we have $p=0$ on $\{\varrho\neq 1\}$); on the other hand, we need to deal with the equality $p(1-\varrho)=0$ and pass it to the limit. This is done by obtaining strong compactness on $p$, from a bound on $\int_0^T\int_\Omega |\nabla p|^2$, similarly to \eqref{H^1 bound porous}. It is important to observe that transforming $\varrho\nabla p$ into $\nabla p$ is only possible in the case of one only phase $\varrho$, while the multi-phasic case presents extra-difficulties (see, for instance \cite{CanGalMon}). These difficulties can be overpassed in the presence of diffusion, which provides extra $H^1$ bounds and hence strong convergence for $\varrho$ (for crowd motion with diffusion, see \cite{MesSan}). Finally, the uniqueness question is a tricky one: if the potential $D$ is $\lambda$-convex then this is easy, but considerations from the DiPerna-Lions theory (\cite{DiPLio,Amb-BV}) suggest that uniqueness should be true under weaker assumptions; nevertheless this is not really proven. On the other hand, in \cite{DiMMes} uniqueness is proven under very mild assumptions on the drift (or, equivalently, on the potential), provided some diffusion is added.
  
  \paragraph{{\bf Sliced Wasserstein distance}}
   
 Inspired by a construction proposed in \cite{PitKokDah} for image impainting, M. Bernot proposed around 2008 an interesting scheme to build a transport map between two given measures which, if not optimal, at least had some monotoncity properties obtained in an isotropic way.

Consider two measures $\mu,\nu\in\pical(\R^d)$ and project them onto any one-dimen\-sional direction. For every $e\in \mathbb S^{d\!-\!1}$ (the unit sphere of $\R^d$), we take the map $\pi_e:\R^d\to\R$ given by $\pi_e(x)=x\cdot e$ and look at the image measures $(\pi_e)_\#\mu$ and $(\pi_e)_\#\nu$. They are measures on the real line, and we call $T_e:\R\to\R$ the monotone (optimal) transport between them. The idea is that, as far as the direction $e$ is concerned, every point $x$ of $\R^d$ should be displaced of a vector $\vv_e(x):=(T_e(\pi_e(x))-\pi_e(x))e$. To do a global displacement, consider $\vv(x)=\fint_{\mathbb S^{d\!-\!1}}\vv_e(x)\,\dd\haus^{d\!-\!1}(e)$, where $\haus^{d\!-\!1}$ is the uniform Hausdorff measure on the sphere.

There is no reason to think that $\id+\vv$ is a transport map from $\mu$ to $\nu$, and indeed in general it is not. But if one fixes a small time step $\tau>0$ and uses a displacement $\tau \vv$ getting a measure $\mu_1=(\id+\tau \vv)_\#\mu$, then it is possible to iterate the construction. One expects to build in this way a sequence of measures $\mu_n$ that converges to $\nu$, but this has not yet been rigorously proven. From the empirical point of view, the transport maps that are obtained in this way are quite satisfactory, and have been tested in particular in the discrete case (a finite number of Dirac masses with equal mass, i.e. the so-called assignment problem). 

For the above construction, discrete in time and used for applications to images, there are not many references (essentially, the only one is \cite{RabPeyDelBer}, see also \cite{Bonnotte} for a wider discussion). A natural continuous counterpart exists: simply consider, for every absolutely continuous measure $\varrho\in\pical(\R^d)$, the vector field $\vv=\vv_{(\varrho)}$ that we defined above as a velocity field depending on $\varrho$ (absolute continuity is just required to avoid atoms in the projections). Then, we solve the equation
  \begin{equation}\label{flowbernot}
  \partial_t\varrho_t+\nabla\cdot(\varrho_t \vv_{(\varrho_t)})=0.
  \end{equation}

It happens that this equation has a gradient flow structure: it is indeed the gradient flow in $\WW_2$ of a functional related to a new distance on probability measures, induced by this construction.

This distance can be defined as follows (see \cite{RabPeyDelBer}): given two measures $\mu,\nu\in\pical_2(\R^d)$, we define
$$\mathrm{S}W_2(\mu,\nu):=\left(\fint_{\mathbb S^{d\!-\!1}} W_2^2((\pi_e)_\#\mu,(\pi_e)_\#\nu)\,\dd\haus^{d\!-\!1}(e)\right)^{1/2}.$$
This quantity could have been called ``projected Wasserstein distance'' (as it is based on the behavior through projections), but since in \cite{RabPeyDelBer} it is rather called ``sliced Wasserstein distance'', we prefer to keep the same terminology.  

The fact that $\mathrm{S}W_2$ is a distance comes from $W_2$ being a distance. The triangle inequality may be proven using the triangle inequality for $W_2$ (see Section \ref{5.1}) and for the $L^2$ norm. Positivity and symmetry are evident. The equality $\mathrm{S}W_2(\mu,\nu)=0$ implies $W_2^2((\pi_e)_\#\mu,(\pi_e)_\#\nu)$ for all $e\in \mathbb S^{d\!-\!1}$. This means $(\pi_e)_\#\mu=(\pi_e)_\#\nu)$ for all $e$ and it suffices to prove $\mu=\nu$.

It is evident from its definition, and from the fact that the maps $\pi_e$ are $1$-Lipschitz (which implies $W_2^2((\pi_e)_\#\mu,(\pi_e)_\#\nu)\leq W_2^2(\mu,\nu)$) that we have $\mathrm{S}W_2(\mu,\nu)\leq W_2(\mu,\nu)$. Moreover, the two distances also induce the same topology, at least on compact sets. Indeed, the identity map from $\WW_2$ to $(\pical(\Omega),\mathrm{S}W_2)$ is continuous (because of $\mathrm{S}W_2\leq W_2$) and bijective. Since the space where it is defined is compact, it is also a homeomorphism. 
One can also prove more, i.e. an inequality of the form $W_2\leq C\mathrm{S}W_2^\beta$ for a suitable exponent $\beta\in]0,1[$. Chapter 5 in \cite{Bonnotte} proves this inequality with $\beta=(2(d+1))^{-1}$.

The interest in the use of this distance is the fact that one has a distance on $\pical(\Omega)$ with very similar qualitative properties as $W_2$, but much easier to compute, since it only depends on one-dimensional computations (obviously, the integral over $e\in \mathbb S^{d\!-\!1}$ is discretized in practice, and becomes an average over a large number of directions). We remark anyway an important difference between $W_2$ and $\mathrm{S}W_2$: the latter is not a geodesic distance. On the contrary, the geodesic distance associated to $\mathrm{S}W_2$ (i.e. the minimal lenght to connect two measures) is exactly $W_2$.
 
If we come back to gradient flows, it is not difficult to see that the equation \eqref{flowbernot} is the gradient flow of $F(\varrho):=\frac 12\mathrm{S}W_2^2(\varrho,\nu)$ and can be studied as such. Existence and estimates on the solution of this equation are proven in \cite{Bonnotte}, and the nonlinearity of $\vv_{(\varrho)}$ is quite easy to deal with. On the other hand, many, natural and useful, questions are still open: is it true that $\varrho_t\deb\nu$ as $t\to\infty$? Can we define (at least under regularity assumptions on the initial data) the flow of the vector field $\vv_{(\varrho_t)}$, and what is the limit of this flow as $t\to\infty$? The idea is that it should be a transport map between $\varrho_0$ and $\nu$ and, if not the optimal transport map, at least a ``good'' one, but most of the related questions are open.
 			
\subsection{Dirichlet boundary conditions}\label{8.4.3}\index{Dirichlet conditions}\index{heat equation}

For sure, the attentive reader has already noted that all the equations that have been identified as gradient flows for the distance $W_2$ on a bounded domain $\Omega$ are always accompanied by Neumann boundary conditions. This should not be surprising. Wasserstein distances express the movement of masses when passing from a configuration to another, and the equation represents the conservation of mass. It means that we are describing the movement of a collection $\varrho$ of particles, bound to stay inside a given domain $\Omega$, and selecting their individual velocity $\vv$ in way which is linked to the global value of a certain functional $F(\varrho)$. It is natural in this case to have boundary conditions which write down the fact that particles do not exit the domain, and the pointwise value of the density $\varrho$ on $\partial\Omega$ is not particularly relevant in this analysis. Note that ``do not exit'' does not mean ``those on the boundary stay on the boundary'', which is what happens when solutions are smooth and the velocity field $\vv$ satisfies $\vv\cdot\nn=0$. Yet, the correct Neumann condition here is rather $\varrho \vv \cdot\nn=0$ a.e., which means that particles could enter from $\partial\Omega$ into the interior, but immediately after it happens there will be (locally) no mass on the boundary, and the condition is not violated, hence. On the contrary, should some mass go from the interior to outside $\Omega$, then we would have a violation of the Neumann condition, since there would be (intuitively) some mass $\varrho>0$ on the boundary with velocity directed outwards.

Anyway, we see that Dirichlet conditions do not find their translation into $W_2$ gradient flows!

To cope with Dirichlet boundary conditions, Figalli and Gigli defined in \cite{FigGig calore} a sort of modified Wasserstein distance, with a special role played by the boundary $\partial\Omega$, in order to study the Heat equation $\partial_t\varrho=\Delta\varrho$ with Dirichlet b.c. $\varrho=1$ on $\partial\Omega$.
Their definition is as follows: given two finite positive measures $\mu,\nu\in\M_+(\ip\Omega)$ (not necessarily probabilities, not necessarily with the same mass), we define 
$$\Pi b(\mu,\nu)=\{\gamma\in\M_+(\overline\Omega\times\overline\Omega)\;:\;(\pi_x)_\#\gamma\res \ip\Omega=\mu,\,(\pi_y)_\#\gamma\res \ip\Omega=\nu\}.$$
Then, we set
$$Wb_2(\mu,\nu):=\sqrt{\,\inf \left\{\int_{\overline\Omega\times\overline\Omega} |x-y|^2\,\dd\gamma,\;\gamma\in \Pi b(\mu,\nu)\right\}}.$$
The index $b$ stands for the special role played by the boundary. Informally, this means that the transport from $\mu$ to $\nu$ may be done usually (with a part of $\gamma$ concentrated on $\ip\Omega\times\ip\Omega$), or by moving some mass from $\mu$ to $\partial\Omega$ (using $\gamma\res(\ip\Omega\times\partial\Omega)$), then moving from one point of the boundary to another point of the boundary (this should be done by using $\gamma\res(\partial\Omega\times\partial\Omega)$, but since this part of $\gamma$ does not appear in the constraints, then we can forget about it, and the transport is finally free on $\partial\Omega$), and finally from $\partial\Omega$ to $\nu$ (using $\gamma\res(\partial\Omega\times\ip\Omega)$). 

In \cite{FigGig calore} the authors prove that $Wb_2$ is a distance, that the space $\M_+(\ip\Omega)$ is always a geodesic space, independently of convexity or connectedness properties of $\Omega$ (differently from what happens with $\Omega$, since here the transport is allowed to ``teleport'' from one part of the boundary to another, either to pass from one connected component to another or to follow a shorter path going out of $\Omega$), and they study the gradient flow, for this distance, of the functional $F(\varrho)=\int (\varrho\log\varrho-\varrho)\,\dd x$. Note that in the usual study of the entropy on $\pical(\Omega)$ one can decide to forget the term $-\int\varrho$, which is anyway a constant because the total mass is fixed. Here this term becomes important (if the function $f(t)=t\log t-t$ is usually preferred to $t\log t$, it is because its derivative is simpler, $f'(t)=\log t$, without changing its main properties).

With this choice of the functional and of the distance, the gradient flow that Figalli and Gigli obtain is the Heat equation\index{heat equation} with the particular boundary condition  $\varrho=1$ on $\partial\Omega$. One could wonder where the constant $1$ comes from, and a reasonable explanation is the following: if the transport on the boundary is free of charge, then automatically the solution selects the value which is the most performant for the functional, i.e. the constant $t$ which minimizes $f(t)=t\log t-t$. In this way, changing the linear part and using $F(\varrho)=\int (\varrho\log\varrho-c\varrho)\,\dd x$ could change the constant on the boundary, but the constant $0$ is forbidden for the moment. It would be interesting to see how far one could go with this approach and which Dirichlet conditions and which equations could be studied in this way, but this does not seem to be done at the moment.

Moreover, the authors explain that, due to the lack of geodesic convexity of the entropy w.r.t. $Wb_2$, the standard abstract theory of gradient flows is not able to provide uniqueness results (the lack of convexity is due in some sense to the possible concentration of mass on the boundary, in a way similar to what happened in \cite{MauRouSan} when dealing with the door on $\partial\Omega$). On the other hand, standard Hilbertian results on the Heat equation can provide uniqueness for this equation, as the authors smartly remark in \cite{FigGig calore}.
 
We observe that this kind of distances with free transport on the boundary were already present in \cite{BouButSep,BouBut JEMS}, but in the case of the Wasserstein distance $W_1$, and the analysis in those papers was not made for applications to gradient flows, which are less natural to study with $p=1$. We can also point out a nice duality formula:
$$
Wb_1(\mu,\nu):=\min\left\{\int |x-y|\,\dd\gamma\,:\,\gamma\in\Pi b(\mu,\nu)\right\}
=\sup\left\{\int u\,\dd(\mu-\nu)\,:\, u\in \Lip_1(\Omega),\,u=0\,\mbox{ on }\partial\Omega\right\}. 
$$
In the special case $p=1$ and $\mu(\ip\Omega)=\nu(\ip\Omega)$, we also obtain
$$Wb_1(\mu,\nu)=\Wc(\mu,\nu),\quad\mbox{ for }c(x,y)=\min\{|x-y|,d(x,\partial\Omega)+d(y,\partial\Omega)\}.$$
The cost $c$ is a pseudo-distance on $\Omega$ where moving on the boundary is free. This kind of distance has also been used in \cite{ButOudSte} (inspired by \cite{BouBut JEMS}) to model free transport costs on other lower dimensional sets, and not only the boundary (with the goal to model, for instance, transportation networks, and optimize their shape). It is interesting to see the same kind of ideas appear for so different goals.\index{distance costs}

\subsection{Numerical methods from the JKO scheme}\label{W2num}

We present in this section two different numerical methods which have been recently proposed to tackle evolution PDEs which have the form of a gradient flow in $\mathbb W_2(\Omega)$ via their variational JKO scheme. We will only be concerned with discretization methods allowing the numerical treatment of one step of the JKO scheme, i.e. solving problems of the form
$$\min\left\{F(\varrho)+\frac12 W_2^2(\varrho,\nu)\;:\;\varrho\in\pical(\Omega)\right\},$$
for suitable $\nu$ (to be taken equal to $\varrho^\tau_k$) and suitable $F$ (including the $\tau$ factor). We will not consider the convergence as $\tau\to 0$ of the iterations to solutions of the evolution equation.

We will present two methods. One, essentially taken from \cite{BenCarLab}, is based on the Benamou-Brenier formula first introduced in \cite{BenBre} as a numerical tool for optimal transport. This method is well-suited for the case where the energy $F(\varrho)$ used in the gradient flow is a convex function of $\varrho$. For instance, it works for functionals of the form $F(\varrho)=\int f(\varrho(x))\dd x+\int V\dd \varrho$ and can be used for Fokker-Planck and porous medium equations. The second method is based on methods from semi-discrete optimal transport, essentially developed by Q. M\'erigot using computational geometry (see \cite{Mer, KitMerThi} and \cite{levy3D} for 3D implementation) and translates the problem into an optimization problem in the class of convex functions; it is well suited for the case where $F$ is geodesically convex, which means that the term $\int f(\varrho(x))\,\dd x$ is ony admissible if $f$ satisfies McCann's condition, the term $\int V\dd \varrho$ needs $V$ to be convex, but interaction terms such as $\int\int W(x-y)\,\dd \varrho(x)\,\dd \varrho(y)$ are also allowed, if $W$ is convex.

\paragraph{{\bf Augmented Lagrangian methods}}

Let us recall the basis of the Benamou-Brenier method. This amounts to solve the variational problem \eqref{BBp} which reads, in the quadratic case, as
$$\min\quad \sup_{(a,b)\in K_2} \int\int a\dd\varrho + \int\int b\cdot \dd E\;:\;\partial_t\varrho_t+\nabla\cdot E_t=0,\;\varrho_0=\mu,\,\varrho_1=\nu
$$
where  $K_2=\{(a,b)\in\R\times\R^d\;:\;a+\frac 12 |b|^2\leq 0\}$. We then use the fact that the continuity equation constraint can also be written as a sup penalization, by adding to the functional
$$\sup_{\phi\in C^1([0,1]\times \Omega)} -\int\int \partial_t\phi\dd\varrho- \int\int \nabla\phi\cdot \dd E+\int \phi_1\dd \nu-\int \phi_0\dd \mu,$$
which is $0$ in case the constraint is satisfied, and $+\infty$ if not.

It is more convenient to express everything in the space-time formalism, i.e. by writing $\nabla_{t,x}\phi$ for $( \partial_t\phi,\nabla\phi)$ and using the variable $\mm$ for $(\varrho,E)$ and $A$ for $(a,b)$. We also set $G(\phi):=\int \phi_1\dd \nu-\int \phi_0\dd \mu$. Then the problem becomes
$$\min_\mm \sup_{A,\phi} \quad \mm\cdot (A-\nabla_{t,x}\phi)-I_{K_2}(A)+G(\phi),$$
where the scalar product is here an $L^2$ scalar product, but becomes a standard Euclidean scalar product as soon as one discretizes (in time-space).  The function $I_{K_2}$ denotes the indicator function in the sense of convex analysis, i.e. $+\infty$ if the condition $A\in K_2$ is not satisfied, and $0$ otherwise.

The problem can now be seen as the search for a saddle-point of the Lagrangian 
$$L(\mm,(A,\phi)):=\mm\cdot (A-\nabla_{t,x}\phi)-I_{K_2}(A)+G(\phi),$$
 which means that we look for a pair $(\mm,(A,\phi)) $ (actually, a triple, but $A$ and $\phi$ play together the role of the second variable) where $\mm$ minimizes for fixed $(A,\phi)$ and $(A,\phi)$ maximizes for fixed $\mm$. This fits the following framework, where the variables are $X$ and $Y$ and the Lagrangian has the form $L(X,Y):=X\cdot \Lambda Y-H(Y)$. In this case one can use a very smart trick, based on the fact that the saddle points of this Lagrangan are the same of the {\it augmented Lagrangian} $\tilde L$ defined as $\tilde L(X,Y):=X\cdot \Lambda Y-H(Y)-\frac r2|\Lambda Y|^2$, whatever the value of the parameter $r>0$ is. Indeed, the saddle-point of $L$ are characterized by (we assume all the functions we minimize are convex and all the functions we maximize are concave)
$$\begin{cases} \Lambda Y=0&\mbox{(optimality of $X$)},\\
			\Lambda^t X-\nabla H(Y)=0 &\mbox{(optimality of $Y$)},\end{cases}$$
			while those of $\tilde L$ are characterized by
			
			$$\begin{cases} \Lambda Y=0&\mbox{(optimality of $X$)},\\
			\Lambda^t X-\nabla H(Y)-r\Lambda^t\Lambda Y=0 &\mbox{(optimality of $Y$)},\end{cases}$$
which is the same since the first equation implies that the extra term in the second vanishes.

In this case, we obtain a saddle point problem of the form 
$$\min_\mm \max_{A,\phi}\qquad \mm\cdot (A-\nabla_{t,x}\phi)-I_{K_2}(A)+G(\phi)-\frac{r}{2}||A-\nabla_{t,x}\phi||^2$$
(where the squared norm in the last term is an $L^2$ norm in time and space),
which is then solved by iteratively repeating three steps: for fixed $A$ and $\mm$, finding the optimal $\phi$ (which amounts to minimizing a quadratic functional in calculus of variations, i.e. solving a Poisson equation in the space-time $[0,1]\times\Omega$, with Neumann boundary conditions, homogeneous on $\partial \Omega$ and non-homogeneous on $t=0$ and $t=1$, due to the term $G$); then for fixed $\phi$ and $\mm$ find the optimal $A$ (which amounts to a pointwise minimization problem, in this case a projection on the convex set $K_2$); finally update $\mm$ by going in the direction of the gradient descent, i.e. replacing $\mm$ with $\mm-r(A-\nabla_{t,x}\phi)$ (it is convenient to choose the parameter of the gradient descent to be equal to that the Augmented Lagrangian).

This is what is done in the case where the initial and final measures are fixed. At every JKO step, one is fixed (say, $\mu$), but the other is not, and a penalization on the final $\varrho_1$ is added, of the form $\tau F(\varrho_1)$. Inspired from the considerations above, the saddle point below allows to treat the problem
$$\min_{\varrho_1}\quad \frac 12W_2^2(\varrho_1,\mu)+\int f(\varrho_1(x))\dd x+\int V\dd \varrho_1$$
by formulating it as
\begin{eqnarray*}
\min_{\mm,\varrho_1}\; \max_{A,\phi,\lambda}&\quad& \int\int \mm\cdot (A-\nabla_{t,x}\phi) + \int \varrho_1\cdot (\phi_1+\lambda+V)-\int\int I_{K_2}(A)-\int \phi_0\dd\mu-\int f^*(\lambda(x))\dd x\\&&-\frac{r}{2}\int\int|A-\nabla_{t,x}\phi|^2-\frac{r}{2}\int|\phi_1+\lambda+V|^2,
\end{eqnarray*}
where we re-inserted the integration signs to underline the difference between integrals in space-time (with $\mm, A$ and $\phi$) and in space only (with $\phi_0,\phi_1,\varrho_1,V$ and $\lambda$). The role of the variable $\lambda$ is to be dual to $\varrho_1$, which allows to express $f(\varrho_1)$ as $\sup_\lambda\, \varrho_1\lambda-f^*(\lambda)$.

To find a solution to this saddle-point problem, an iterative procedure is also used, as above. The last two steps are the update via a gradient descent of $\mm$ and $\varrho_1$, and do not require further explications. The first three steps consist in the optimization of $\phi$ (which requires the solution of a Poisson problem) and in two pointwise minimization problems in order to find $A$ (which requires a projection on $K_2$) and $\lambda$ (the minimization of $f^*(\lambda)+\frac{r}{2}|\phi_1(x)+\lambda+V(x)|^2-\varrho_1(x)\lambda$, for fixed $x$).

For the applications to gradient flows, a small time-step $\tau>0$ has to be fixed, and this scheme has to be done for each $k$, using $\mu=\varrho^\tau_k$ and setting $\varrho^\tau_{k+1}$ equal to the optimizer $\varrho_1$ and the functions $f$ and $V$ must include the scale factor $\tau$. The time-space $[0,1]\times \Omega$ has to be discretized but the evolution in time is infinitesimal (due to the small time scale $\tau$), which allows to choose a very rough time discretization. In practice, the interval $[0,1]$ is only discretized using less than 10 time steps for each $k$\dots

The interested reader can consult \cite{BenCarLab} for more details, examples and simulations.

\paragraph{{\bf Optimization among convex functions}}

It is clear that the optimization problem 
$$\min_{\varrho}\quad \frac 12W_2^2(\varrho,\mu)+F(\varrho)$$
can be formulated in terms of transport maps as
$$\min_{T:\Omega\to\Omega}\quad \frac 12 \int_{\Omega}|T(x)-x|^2\dd\mu(x)+F(T_\#\mu).$$
Also, it is possible to take advantage of Brenier's theorem which characterizes optimal transport maps as gradient of convex functions, and recast it as
$$\min_{ u\,\mbox{ convex : }\nabla u\in\Omega}\quad \frac 12 \int_{\Omega}|\nabla u(x)-x|^2\dd\mu(x)+F((\nabla u)_\#\mu).$$
It is useful to note that in the last formulation the convexity of $ u$ is not necessary to be imposed, as it would anyway come up as an optimality condition. On the other hand, very often the functional $F$ involves explicity the density of the image measure  $(\nabla u)_\#\mu$ (as it is the case for the typical example $\mathcal F$), and in this case convexity of $ u$ helps in computing this image measure. Indeed, whenever $ u$ is convex we can say that the density of $\varrho:=(\nabla u)_\#\mu$ (if $\mu$ itself is absolutely continuous, with a density that we will denote by $\varrho_0$) is given by\footnote{This same formula takes into account the possibility that $\nabla u$ could be non-injective, i.e. $ u$ non-strictly convex, in which case the value of the density could be $+\infty$ due to the determinant at the denominator which would vanish.}
$$\varrho=\frac{\varrho_0}{\det(D^2 u)}\circ(\nabla u)^{-1}.$$

Hence, we are facing a calculus of variations problem in the class of convex functions. A great difficulty to attack this class of problems is how to discretize the space of convex functions. The first natural approach would be to approximate them by piecewise linear functions over
a fixed mesh.  In this case, imposing convexity becomes a local feature, and the number of linear
constraints for convexity is
proportional to the size of the mesh. Yet, Chon\'e and Le Meur showed in \cite{ChoLeM}
that we cannot approximate in this way all convex functions, but only those satisfying some extra constraints on their Hessian (for instance, those which also have a positive mixed second derivative $\partial^2 u/\partial x_i\partial x_j$). Because of this difficulty, a different approach is needed. For instance, Ekeland and Moreno-
Bromberg used the representation of a convex function as a maximum of affine
functions \cite{EkeMor}, but this needed many more linear constraints; Oudet and M\'erigot \cite{MerOud} decided to test convexity on a grid different (and less refined) than that where the functions are defined\dots\ These methods give somehow satisfactory answers for functionals involving $ u$ and $\nabla u$, but are not able to handle terms
involving the Monge-Amp\`ere operator $\det(D^2 u)$. 

The method proposed in \cite{BenCarMerOud}, that we will roughly present here, does not really use a prescribed mesh. The idea is the following: suppose that $\mu$ is a discrete measure of atomic type, i.e. of the form $\sum_j a_j\delta_{x_j}$. A convex defined on its support $S:=\{x_j\}_j$ will be a function $ u:S\to\R$ such that at each point $x\in S$ the subdifferential
$$\partial u(x):=\{p\in\R^d\,:\,  u(x)+p\cdot(y-x)\leq  u(y)\mbox{ for all }y\in S\}$$
 is non-empty. Also, the Monge-Amp\`ere operator will be defined by using the sub-differential, and more precisely the equality
 $$\int_B \det(D^2 u(x))\dd x=|\partial u(B)|$$
 which is valid for smooth convex functions $ u$ and arbitrary open sets $B$. An important point is the fact that whenever $f$ is superlinear, functionals of the form $\int f(\varrho(x))\dd x$ impose, for their finiteness, the positiviy of $\det(D^2 u)$, which will in turn impose that the sub-differential has positive volume, i.e. it is non-empty, and hence convexity\dots\
 
 More precisely, we will minimize over the set of pairs $( u,P):S\to\R\times\R^d$ where $P(x)\in\partial u(x)$ for every $x\in S$. For every such pair $( u,P)$ we weed to define $G( u,P)$ which is meant to be $(\nabla u)_\#\mu$, and define $F(G( u,P))$ whenever $F$ has the form $F=\mathcal F+\mathcal V+\mathcal W$.
 We will simply define 
 $$\mathcal V(G( u,P)):=\sum_{j}a_j V(P(x_j))\quad \mbox{ and }\mathcal W(G( u,P)):=\sum_{j,j'}a_j a_{j'} W(P(x_j)-P(x_{j'})),$$
 which means that we just use $P_\#\mu$ instead of $(\nabla u)_\#\mu$. Unfortunately, this choice is not adapted for the functional $\mathcal F$, which requires absolutely continuous measures, and $P_\#\mu$ is atomic.
 In this case, instead of concentrating all the mass $a_j$ contained in the point $x_j\in S$ on the unique point $P(x_j)$, we need to spread it, and we will spread it uniformly on the whole subdifferential $\partial u(x_j)$. This means that we also define a new surrogate of the image measure $(\nabla u)_\#\mu$, called $G^{ac}( u,P)$ (where the superscript {\it ac} stands for {\it absolutely continuous}), given by
 $$G^{ac}( u,P):=\sum_j \frac{a_j}{|A_j|}\lcal^d\res A_j,$$
 where $A_j:=\partial u(x_j)\cap\Omega$ (the intersection with $\Omega$ is done in order to take care of the constraint $\nabla u\in\Omega$).
 Computing $\mathcal F(G^{ac}( u,P))$ gives hence
 $$\mathcal F(G^{ac}( u,P))=\sum_j  |A_j| f\left(\frac{a_j}{|A_j|}\right).$$
  It is clear that the discretization of $\mathcal V$ and $\mathcal W$ in terms of $G( u,P)$ are convex functions of $( u,P)$ (actually, of $P$, and the constraint relating $P$ and $ u$ is convex) whenever $V$ and $W$ are convex; concerning $\mathcal F$, it is possible to prove, thanks to the concavity properties of the determinant or, equivalently, to the Brunn-Minkowski inequality (see for instance \cite{Schneider-convex}) that $\mathcal F(G^{ac}( u,P))$ is convex in $ u$ as soon as $f$ satisfies McCann's condition. Globally, it is not surprising to see that we face a convex variational problem in terms of $ u$ (or of $\nabla u$) as soon as $F$ is displacement convex in $\varrho$ (actually, convexity on generalized geodesics based at $\mu$ should be the correct notion).
  
  Then we are lead to study the variational problem
\begin{equation}\label{quentin}
\min_{ u,P}\quad \frac 12 \sum_j  a_j|P(x_j)-x_j|^2+\mathcal V(G( u,P))+\mathcal W(G( u,P))+\mathcal F(G^{ac}( u,P))
\end{equation}
under the constraints $P(x_j)\in A_j:=\partial u(x_j)\cap\Omega$.
Note that we should incorporate the scale factor $\tau$ in the functional $F$ which means that, for practical purposes, convexity in $P$ is guaranteed as soon as $V$ and $W$ have second derivatives which are bounded from below (they are semi-convex) and $\tau$ is small (the quadratic term coming from $W_2^2$ will always overwhelm possible concavity of the other terms). The delicate point is how to compute the subdifferentials $\partial u(x_j)$, and optimize them (i.e. compute derivatives of the relevant quantity w.r.t. $ u$). 

This is now possible, and in a very efficient way, thanks to tools from computational geometry. Indeed, in this context, subdifferentials are exactly a particular case of what are called Laguerre cells, which in turn are very similar to Voronoi cells. We remind that, given some points $(x_j)_j$, their Voronoi cells $V_j$ are defined by
$$V_j:=\left\{x\in\Omega\,:\, \frac12|x-x_j|^2\leq \frac12|x-x_{j'}|^2\,\mbox{ for all }j'\right\}$$
 (of course the squares of the distances could be replaced by the distances themselves, but in this way it is evident that the cells $V_j$ are given by a finite number of linear inequalities, and are thus convex polyhedra; the factors $\frac12$ are also present only for cosmetic reasons). Hence, Voronoi cells are the cells of points which are closer to one given point $x_j\in S$ than to the others.
 
 In optimal transport a variant of these cells is more useful: given a set of values $\psi_j$, we look for the cells (called {\it Laguerre cells})
 $$W_j:=\left\{x\in\Omega\,:\, \frac 12|x-x_j|^2+\psi_j\leq \frac12|x-x_{j'}|^2+\psi_{j'}\,\mbox{ for all }j'\right\}.$$
This means that we look at points which are closer to $x_j$ than to the other points $x_{j'}$, up to a correction\footnote{If the points $x_j$ are the locations of some ice-cream sellers, we can think that $\psi_j$ is the price of an ice-cream at $x_j$, and the cells $W_j$ will represent the regions where customers will decide to go to the seller $j$, keeping into account both the price and the distance.} given by the values $\psi_j$. It is not difficult to see that also in this case cells are convex polyhedra. And it is also easy to see that, if $\varrho$ is an absolutely continuous measure on $\Omega$ and $\mu=\sum_j a_j\delta_{x_j}$, then finding an optimal transport map from $\varrho$ to $\mu$ is equivalent to finding values $\psi_j$ such that $\varrho(W_j)=a_j$ for every $j$ (indeed, in this case, the map sending every point of $W_j$ to $x_j$ is optimal, and $-\psi_j$ is the value of the corresponding Kantorovich potential at the point $x_j$). Finally, it can be easily seen that the Laguerre cells corresponding to $\psi_j:= u(x_j)-\frac12|x_j|^2$ are nothing but the subdifferentials of $ u$ (possibly intersected with $\Omega$). 

Handling Laguerre cells from the computer point of view has for long been difficult, but it is now state-of-the-art in computational geometry, and it is possible to compute very easily their volumes (incidentally, also find some points $P$ belonging to them, which is useful so as to satisfy the constraints of Problem \eqref{quentin}), as well as the derivatives of their volumes (which depend on the measures of each faces) w.r.t. the values $\psi_j$. For the applications to semi-discrete\footnote{One measure being absolutely continuous, the other atomic.} optimal transport problems, the results are now very fast (with discretizations with up to 10$^6$ points in some minutes, in 3D; the reader can have a look at \cite{Mer,KitMerThi,levy3D} but also to Section 6.4.2 in \cite{OTAM}), and the same tools have been used for the applications to the JKO scheme that we just described.

In order to perform an iterated minimization, it is enough to discretize $\varrho_0$ with a finite number of Dirac masses located at points $x_j$, to fix $\tau>0$ small, then to solve \eqref{quentin} with $\mu=\varrho^\tau_{k}$ and set $\varrho^\tau_{k+1}:=G( u,P)$ for the optimal $( u,P)$. Results, proofs of convergence and simulations are in \cite{BenCarMerOud}.
 
\section{The heat flow in metric measure spaces}\label{heat}

In this last section we will give a very sketchy overview of an interesting research topic developed by Ambrosio, Gigli, Savar\'e and their collaborators, which is in some sense a bridge between 
\begin{itemize}
\item the theory of gradient flows in $\mathbb W_2$, seen from an abstract metric space point of view (which is not the point of view that we underlined the most in the previous section), 
\item and the current research topic of analysis and differential calculus in metric measure spaces.
\end{itemize}
This part of their work is very ambitious, and really aims at studying analytical and geometrical properties of metric measure spaces; what we will see here is only a starting point.

The topic that we will briefly develop here is concerned with the heat flow, and the main observation is the following: in the Euclidean space $\R^d$ (or in a domain $\Omega\subset\R^d)$, the heat flow $\partial_t \varrho=\Delta\varrho$ may be seen as a gradient flow in two different ways:
\begin{itemize}
\item first, it is the gradient flow in the Hilbert space $L^2(\Omega)$, endowed with the standard $L^2$ norm, of the functional consisting in the Dirichlet energy $\mathcal{D}(\varrho)=\int|\nabla\varrho|^2\dd x$ (a functional which is set to $+\infty$ if $\varrho\notin H^1(\Omega)$); in this setting, the initial datum $\varrho_0$ could be any function in $L^2(\Omega)$, but well-known properties of the heat equation guarantee $\varrho_0\geq 0\impl\varrho_t\geq 0$ and, if $\Omega$ is the whole space, or boundary conditions are Neumann, then $\int\varrho_0\dd x=1\impl\int\varrho_t\dd x=1$; it is thus possible to restrict to probability densities (i.e. positive densities with mass one);
\item then, if we use the functional $\mathcal E$ of the previous section (the entropy defined with $f(t)=t\log t$), the heat flow is also a gradient flow in the space $ \WW_2(\Omega)$.
\end{itemize}

A natural question arises: is the fact that these two flows coincide a general fact? How to analyze this question in a general metric space? In the Euclidean space this is easy: we just write the PDEs corresponding to each of these flows and, as they are the same PDE, for which we know uniqueness results, then the two flows are the same.

First, we realize that the question is not well-posed if the only structure that we consider on the underlining space is that of a metric space. Indeed, we also need a reference measure (a role played by the Lebesgue measure in the Euclidean space). Such a measure is needed in order to define the integral $\int|\nabla\varrho|^2\dd x$, and also the entropy $\int \varrho\log\varrho \,\dd x$. Roughly speaking, we need to define ``$\dd x$''.  

Hence, we need to consider {\it metric measure spaces}, $(X,d,m)$, where $m\geq 0$ is a reference measure (usually finite) on the Borel tribe of $X$. The unexperienced reader should not be surprised: metric measure spaces are currently the new frontier of some branches of geometric analysis, as a natural generalization of Riemannian manifolds. In order not to overburden the reference list, we just refer to the following papers, already present in the bibliography of this survey for other reasons:\cite{usersguide,AmbGigSav-heat,AGV3,AGV4,cheeger,GIGLIHeat,GigKuwOht,haj,haj2,h-k,LotVil,sha,sturm}.

\subsection{Dirichlet and Cheeger energies in metric measure spaces}\label{5.1}
In order to attack our question about the comparison of the two flows, we first need to define and study the flow of the Dirichlet energy, and in particular to give a suitable definition of such an energy. This more or less means defining the space $H^1(X)$ whenever $X$ is a metric measure space (MMS). This is not new, and many authors studied it: we cite in particular \cite{haj,haj2,cheeger,sha}). Moreover, the recent works by Ambrosio, Gigli and Savar\'e (\cite{AmbGigSav-heat,AGV4}) presented some results in this direction, useful for the analysis of the most general case (consider that most of the previous results require a {\it doubling} assumption and the existence of a {\it Poincar\'e inequality}, see also\cite{h-k}, and this assumption on $(X,d,m)$ is not required in their papers). One of the first definition of Sobolev spaces on a MMS had been given by Hai\l asz, who used the following definition
$$f\in H^1(X,d,m)\;\mbox{ if there is } g\in L^2(X,m) \mbox{ such that } |f(x)-f(y)|\leq d(x,y)(g(x)+g(y)).$$
This property characterizes Sobolev spaces in $\R^d$ by choosing 
$$g=const\cdot M\left[|\nabla f|\right],$$
where $M[u]$ denotes the maximal function of $u$: $M[u](x):=\sup_{r>0} \fint_{B(x,r)}u$ (the important point here is the classical result in harmonic analysis guaranteeing $||M[u]||_{L^2}\leq C(d)||u||_{L^2}$) and $c$ is a suitable constant only depending on the dimension $d$. As this definition is not local, amost all the recent investigations on these topics are rather based on some other ideas, due to Cheeger (\cite{cheeger}), using the relaxation starting from Lipschitz functions, or to Shanmuganlingam (\cite{sha}), based on the inequality
$$|f(x(0))-f(x(1))|\leq \int_0^1 |\nabla f(x(t)||x'(t)|\,\dd t$$ 
required to hold on almost all curves, in a suitable sense. The recent paper \cite{AGV4} resents a classification of the various weak notions of modulus of the gradient in a MMS and analyzes their equivalence. On the contrary, here we will only choose one unique definition for $\int|\nabla f|^2\dd m$, the one which seems the simplest.

For every Lipschitz function $f$ on $X$, let us take its {\it local Lipschitz constant} $|\nabla f|$, defined in \eqref{const lip loc}, and set $\mathcal{D}(f):=\int |\nabla f|^2(x) \dd m$. Then, by relaxation, we define the {\it Cheeger Energy}\footnote{The name has been chosen because Cheeger also gave a definition by relaxation; moreover, the authors did not wish to call it Dirichlet energy, as generally this name is used fro quadratic forms.} $\mathcal{C}(f)$: 
$$\mathcal{C}(f):=\inf\left\{\liminf_n \;\mathcal{D}(f_n)\;:\;f_n\to f \mbox{ in }L^2(X,m),\; f_n\in \Lip(X)\right\}.$$
We then define the Sobolev space $H^1(X,d,m)$ as the space of functions such that $\mathcal{C}(f)<+\infty$. This space will be a Banach space, endowed with the norm $f\mapsto \sqrt{\mathcal{C}(f)}$ and the function $f\mapsto \mathcal C(f)$ will be convex. We can also define $-\Delta f$ as the element of minimal norm of the subdifferential $\partial \mathcal{C}(f)$ (an element belonging to the dual of $H^1(X,d,m)$). Beware that, in general, the map $f\mapsto -\Delta f$ will not be linear (which corresponds to the fact that the norm $\sqrt{\mathcal{C}(f)}$ is in general not Hilbertian, i.e. it does not come from a scalar product).

Definining the flow of $\mathcal C$ in the Hilbert space $L^2(X,m)$ is now easy, and fits well the classical case of convex functionals on  Hilbert spaces or, more generally, of monotone maximal operators (see \cite{BrezOMM}). This brings very general existence and uniqueness results.

\subsection{A well-posed gradient flow for the entropy}
A second step (first developed in \cite{GIGLIHeat} and then generalized in \cite{AmbGigSav-heat}) consists in providing existence and uniqueness conditions for the gradient flow of the entropy, w.r.t. the Wasserstein distance $W_2$. To do so, we consider the funcitonal $\mathcal E$, defined on the set of densities $f$ such that $\varrho:=f\cdot m$ is a probability measure via $\mathcal E(f):=\int f\log f \,\dd m$ and we look at its gradient flow in $ \WW_2$ in the EDE sense. In order to apply the general theory of Section \ref{metricth}, as we cannot use the notion of weak solutions of the continuity equation, it will be natural to suppose that this functional $\mathcal E$ is $\lambda$-geodesically convex for some $\lambda\in \R$. This means, in the sense of Sturm and Lott-Villani, that the space $(X,d,m)$ is a MMS with {\it Ricci curvature bounded from below}. We recall here the corresponding definition, based on the characteristic property 
already evoked in the Section \ref{geodconvW2}, which was indeed a theorem (Proposition \ref{SLV}) in the smooth case.
\begin{definition}
A metric measure space $(X,d,m)$ is said to have a Ricci curvature bounded from below by a constant $K\in\R$ in the sense of Sturm and Lott-Villani if the entropy functional  $\mathcal E:\pical(X)\to\R\cup\{+\infty\}$ defined through
$$\mathcal E(\varrho)=\begin{cases}\int f\log f \dd m &\mbox{ if }\varrho=f\cdot m\\
					+\infty &\mbox{ if $\varrho$ is not absolutely continuous w.r.t. $m$}\end{cases}$$
is $K$-geodesically convex in the space $ \WW_2(X)$.
In this case we say that $(X,d,m)$ satisfies the condition\footnote{The general notation $CD(K,N)$ is used to say that a space has curvature bounded from below by $K$ and dimension bounded from above by $N$ (CD stands for ``curvature-dimension'').} $CD(K,\infty)$.
\end{definition}

Note that the EVI formulation is not available in this case, as we do not have the geodesic convexity of the squared Wasserstein. Moreover, on a general metric space, the use of generalized geodesics is not always possible. This is the reason why we will define the gradient flow of $\mathcal E$ by the EDE condition and not the EVI, but this requires to prove via other methods the uniqueness of such a gradient flow. To do so, Gigli introduced in \cite{GIGLIHeat} an interesting strategy to prove uniqueness for EDE flows, which is based on the following proposition.

\begin{proposition}
If $F:\pical(X)\to \R\cup\{+\infty\}$ is a strictly convex functional (w.r.t. usual convex combinations $\mu_s:=(1-s)\mu_0+s\mu_1$, which is meaningful in the set of probability measures), such that $|\nabla^-F |$ is an upper gradient for $F$ and such that $|\nabla^- F|^2$ is convex, then for every initial measure $\bar\mu$ there exists at most one gradient flow $\mu(t)$ in the EDE sense for the functional $F$ satisfying $\mu(0)=\bar\mu$.
\end{proposition}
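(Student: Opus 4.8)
The plan is to argue by an interpolation (or ``squeezing'') argument that uses all three convexity hypotheses at once. Suppose $\varrho^1_t$ and $\varrho^2_t$ are two EDE gradient flows of $F$ with $\varrho^1_0=\varrho^2_0=\bar\mu$, and consider their pointwise linear interpolation $\eta_t:=\tfrac12\varrho^1_t+\tfrac12\varrho^2_t$, which is again a curve of probability measures. The goal is to show that $\eta$ satisfies the sharp energy-dissipation inequality \eqref{EDE ottentua} \emph{and} the reverse inequality furnished by the upper gradient property, so that every inequality used in between is forced to be an equality; the equality case of the strict convexity of $F$ will then yield $\varrho^1_t=\varrho^2_t$.

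First I would collect three convexity estimates for $\eta_t$. By strict convexity of $F$,
$$F(\eta_t)\le \tfrac12 F(\varrho^1_t)+\tfrac12 F(\varrho^2_t),$$
with strict inequality whenever $\varrho^1_t\neq\varrho^2_t$. By the assumed convexity of $|\nabla^- F|^2$,
$$|\nabla^- F|^2(\eta_t)\le \tfrac12|\nabla^- F|^2(\varrho^1_t)+\tfrac12|\nabla^- F|^2(\varrho^2_t).$$
For the metric derivative I would invoke the joint convexity of $(\mu,\nu)\mapsto W_2^2(\mu,\nu)$ along \emph{linear} combinations of measures (obtained by taking the average of two optimal plans as a competitor): applied to the pairs $(\varrho^i_t,\varrho^i_{t+h})$ it gives $W_2^2(\eta_t,\eta_{t+h})\le \tfrac12 W_2^2(\varrho^1_t,\varrho^1_{t+h})+\tfrac12 W_2^2(\varrho^2_t,\varrho^2_{t+h})$, whence, dividing by $h^2$ and letting $h\to0$,
$$|\eta'|^2(t)\le \tfrac12|(\varrho^1)'|^2(t)+\tfrac12|(\varrho^2)'|^2(t).$$
In particular $\eta$ is absolutely continuous, so the upper gradient inequality applies to it.

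Averaging the two EDE identities (written with $s=0$) and inserting the three estimates above, I would obtain
$$F(\eta_t)+\tfrac12\int_0^t|\eta'|^2\,\dd r+\tfrac12\int_0^t|\nabla^- F|^2(\eta_r)\,\dd r\le F(\bar\mu).$$
On the other hand, since $|\nabla^- F|$ is an upper gradient, the Cauchy--Schwarz and Young inequalities give the reverse bound
$$F(\bar\mu)\le F(\eta_t)+\tfrac12\int_0^t|\eta'|^2\,\dd r+\tfrac12\int_0^t|\nabla^- F|^2(\eta_r)\,\dd r.$$
Hence both are equalities; and since the first one was produced by summing three inequalities all pointing the same way (while the averaged EDE sums to exactly $F(\bar\mu)$), each of them must individually be an equality. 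In particular $F(\eta_t)=\tfrac12 F(\varrho^1_t)+\tfrac12 F(\varrho^2_t)$ for every $t$, and the strict convexity of $F$ forces $\varrho^1_t=\varrho^2_t$ for all $t$, which is the asserted uniqueness (the finiteness $F(\varrho^i_t)\le F(\bar\mu)<+\infty$ needed here is itself a consequence of the EDE).

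The main obstacle, and the only genuinely metric ingredient, is the convexity estimate $|\eta'|^2\le\tfrac12|(\varrho^1)'|^2+\tfrac12|(\varrho^2)'|^2$: it rests on the joint convexity of the squared Wasserstein distance along linear combinations (not geodesics) of measures, which I would justify carefully together with the passage to the limit $h\to0$ and the resulting integrability and absolute continuity of $\eta$. Once this estimate is established, the remaining steps reduce to the bookkeeping of the sandwich of inequalities described above.
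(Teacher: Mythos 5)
Your argument is correct, and it is precisely the strategy the survey has in mind: the paper states this proposition without proof, attributing it to Gigli's work \cite{GIGLIHeat}, and your ``sandwich'' of the averaged EDE against the upper-gradient inequality for the midpoint curve $\eta_t=\tfrac12\varrho^1_t+\tfrac12\varrho^2_t$, using the joint convexity of $W_2^2$ along linear combinations of plans for the metric-derivative estimate, is exactly that argument. The one step you rightly flag as delicate --- establishing that $\eta$ is absolutely continuous and that $|\eta'|^2\le\tfrac12|(\varrho^1)'|^2+\tfrac12|(\varrho^2)'|^2$ a.e. --- is handled cleanly by first deducing $W_2(\eta_t,\eta_s)\le\int_s^t\bigl(\tfrac12 g_1^2+\tfrac12 g_2^2\bigr)^{1/2}\dd r$ from the two-point plan inequality (so $\eta\in\mathrm{AC}$ and its metric derivative exists a.e.) and only then passing to the limit $h\to0$; with that ordering the proof is complete.
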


In particular, this applies to the functional $\mathcal E$: the strict convexity is straightforward, and the squared slope can be proven to be convex with the help of the formula \eqref{point sup lamcon} (it is interesting to observe that, in the Euclidean case, an explicit formula for the slope is known:
\begin{equation}\label{toFischer}
|\nabla^- \mathcal E|^2(\varrho)=\int \frac{|\nabla f|^2}{f}\,\dd x,
\end{equation}
whenever $\varrho=f\cdot \lcal^d$).

\subsection{Gradient flows comparison}
The last point to study (and it is not trivial at all) is the fact that every gradient flow of $\mathcal{C}$ (w.r.t. the $L^2$ distance) is also an EDE gradient flow of $\mathcal E$ for the $W_2$ distance. This one (i.e. $(\mathcal{C},L^2)\impl (\mathcal E,W_2)$) is the simplest direction to consider in this framework, as computations are easier. This is a consequence of the fact that the structure of gradient flows of convex functionals in Hilbert spaces is much more well understood. In order to do so, it is useful to compute and estimates  
$$\frac{d}{dt}\mathcal E(f_t),\quad \mbox{where $f_t$ is a gradient flow of $\mathcal{C}$ in $L^2(X,m)$}.$$
This computation is based on a strategy essentially developed in \cite{GigKuwOht} and on a lemma by Kuwada. The initial proof, contained in \cite{GigKuwOht}, is valid for Alexandroff spaces\footnote{These spaces, see \cite{BGP}, are metric spaces where triangles are at least as fat as the triangles of a model comparison manifold with constant curvature equal to $K$, the comparison being done in terms of the distances from a vertex of a triangle to the points of a geodesic connecting the two other vertices. These spaces can be proven to have always an integer dimension $d\in \mathbb N\cup\{\infty\}$, and can be consideres as MMS whenever $d<\infty$, by endowing them with their Hausdorff measure $\haus^d$. Note anyway that the comparison manifold with constant curvature can be, anyway, taken of dimension $2$, as only triangles appear in the definition.}. The generalization of the same result to arbitrary MMS satisfying $CD(K,\infty)$ is done in \cite{AmbGigSav-heat}.

\begin{proposition} If $f_t$ is a gradient flow of $\mathcal{C}$ in $L^2(X,\haus^d)$, then we have the following equality with the Fischer information:
$$-\frac{d}{dt}\mathcal E(f_t)=\mathcal{C}(2\sqrt{f_t}).$$
Moreover, for every $\varrho=f\cdot\haus^d\in \pical(X)$ we have 
$$\mathcal{C}(2\sqrt{f})\geq |\nabla^- \mathcal E|^2(\varrho)$$
(where the slope of $\mathcal E$ is computed for the $W_2$ distance\footnote{As in \eqref{toFischer}.}). 
Also, if we consider the curve $\varrho_t=f_t\cdot\haus^d$, it happens that $\varrho_t$ in an AC curve in the space $ \WW_2(X)$ and 
$$|\varrho'|(t)^2\leq \mathcal{C}(2\sqrt{f_t}).$$
these three estimates imply that $\varrho_t$ is a gradient flow of $\mathcal E$ w.r.t. $W_2$.
\end{proposition}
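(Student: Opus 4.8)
The statement to be proven is that $\varrho_t:=f_t\cdot\haus^d$ is a gradient flow of $\mathcal E$ for $W_2$ in the EDE sense, i.e. that
\[
\mathcal E(\varrho_s)-\mathcal E(\varrho_t)=\frac12\int_s^t |\varrho'|(r)^2\,\dd r+\frac12\int_s^t |\nabla^-\mathcal E|^2(\varrho_r)\,\dd r\qquad\text{for all }s<t.
\]
The plan is to establish the three displayed estimates of the proposition and then combine them with the general upper-gradient inequality for $\mathcal E$. I would arrange the argument so that the \eqref{toFischer}-type computation produces the exact dissipation rate, while the two inequalities control, respectively, the slope and the metric speed by the \emph{same} Fischer-information quantity $\mathcal{C}(2\sqrt{f_t})$.

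For the first estimate, since $f_t$ is the $L^2(X,\haus^d)$ gradient flow of the convex functional $\mathcal{C}$, the Hilbertian theory of maximal monotone operators (see \cite{BrezOMM}) gives that $t\mapsto f_t$ is locally Lipschitz in $L^2$ for $t>0$ with $\partial_t f_t=\Delta f_t$, where $\Delta f_t$ is (minus) the minimal-norm element of $\partial\mathcal{C}(f_t)$. I would then differentiate $\mathcal E(f_t)$, formally obtaining $\int(\log f_t+1)\partial_t f_t\,\dd m$; using mass conservation $\int\partial_t f_t\,\dd m=0$ and the defining relation of $\Delta$ through $\mathcal{C}$, this collapses to the Fischer information $\int\frac{|\nabla f_t|^2}{f_t}\,\dd m$, which equals $\mathcal{C}(2\sqrt{f_t})$ by the chain rule $|\nabla(2\sqrt f)|=|\nabla f|/\sqrt f$ for the local Lipschitz constant. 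The delicate part is that $\log f_t$ is not a priori an admissible test function, so this step genuinely requires the integrability $\log f_t\in L^1$ and a truncation argument, exactly as in the Euclidean computation behind \eqref{toFischer}.

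For the second estimate, $\mathcal{C}(2\sqrt f)\ge|\nabla^-\mathcal E|^2(\varrho)$, I would exploit that under $CD(K,\infty)$ the entropy is $K$-geodesically convex, so its slope admits the supremum representation \eqref{point sup lamcon}; testing this supremum against perturbations of $\varrho$ obtained by flowing along gradient directions, and comparing the first-order decay of $\mathcal E$ with the transport cost, yields the bound. For the third estimate, $|\varrho'|(t)^2\le\mathcal{C}(2\sqrt{f_t})$, which is Kuwada's lemma, I would use the Kantorovich duality formula for $W_2$ and test the increment $\varrho_{t+h}-\varrho_t$ against Lipschitz ($c$-concave) potentials, using that $\varrho_t$ solves in the weak sense a continuity equation with velocity morally equal to $-\nabla\log f_t$; the duality estimate then bounds $|\varrho'|(t)^2$ by $\int\frac{|\nabla f_t|^2}{f_t}\,\dd m=\mathcal{C}(2\sqrt{f_t})$ and simultaneously shows $\varrho_t\in\mathrm{AC}(\WW_2)$.

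Finally I would combine the three. Integrating the first estimate gives $\mathcal E(\varrho_s)-\mathcal E(\varrho_t)=\int_s^t\mathcal{C}(2\sqrt{f_r})\,\dd r$; splitting $\mathcal{C}(2\sqrt{f_r})=\tfrac12\mathcal{C}(2\sqrt{f_r})+\tfrac12\mathcal{C}(2\sqrt{f_r})$ and applying the second and third estimates to the two halves produces
\[
\mathcal E(\varrho_s)-\mathcal E(\varrho_t)\ge\int_s^t\Big(\tfrac12|\varrho'|(r)^2+\tfrac12|\nabla^-\mathcal E|^2(\varrho_r)\Big)\,\dd r.
\]
For the opposite inequality I would invoke that, under $CD(K,\infty)$, the slope $|\nabla^-\mathcal E|$ is an upper gradient, whence $\mathcal E(\varrho_s)-\mathcal E(\varrho_t)\le\int_s^t|\nabla^-\mathcal E|(\varrho_r)\,|\varrho'|(r)\,\dd r\le\int_s^t(\tfrac12|\varrho'|^2+\tfrac12|\nabla^-\mathcal E|^2)\,\dd r$ by Young's inequality. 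The two bounds coincide, so all inequalities are equalities and the EDE holds, which proves the claim. The main obstacle is the third estimate (Kuwada's lemma): producing a quantitative bound on the $W_2$ metric derivative from the $L^2$-gradient flow, in the absence of a genuine velocity field, is the technical heart and the only step that really uses the metric measure duality; the truncation and integrability justification in the first estimate is the second most delicate point.
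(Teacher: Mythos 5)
Your proposal is correct and follows essentially the route the paper intends: the survey states the three estimates without proof (referring to \cite{GigKuwOht} and \cite{AmbGigSav-heat} for the hard analytic content, notably Kuwada's lemma), and your combination step --- integrating the dissipation identity, splitting $\mathcal{C}(2\sqrt{f_r})$ into two halves bounded below by $\tfrac12|\varrho'|(r)^2$ and $\tfrac12|\nabla^-\mathcal E|^2(\varrho_r)$ respectively, and closing with the upper-gradient property of the slope under $CD(K,\infty)$ plus Young's inequality --- is exactly the standard EDE argument the survey itself describes in Section \ref{3.2}. The only caveat is that your sketches of the three estimates delegate the genuinely difficult points (the chain rule and integration by parts for the non-quadratic Cheeger energy, and the Hopf--Lax duality behind Kuwada's lemma) to the references, which is precisely what the paper does as well.
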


Once this equivalence is established, we can wonder about the properties of this gradient flow. The $L^2$ distance being Hilbertian, it is easy to see that the C$^2$G$^2$ property is satisfied, and hence this flow also satisfies EVI. On the contrary, it is not evident that the same is true when we consider the same flow as the gradient flow of $\mathcal E$ for the distance $W_2$. Indeed, we can check that the following three conditions are equivalent (all true or false depending on the space $(X,d,m)$, which is supposed to satisfy $CD(K,\infty)$; see \cite{AGV3} for the proofs):
\begin{itemize}
\item the unique EDE gradient flow of $\mathcal E$ for $W_2$ also satisfies EVI;
\item the heat flow (which is at the same time the gradient flow of $\mathcal E$ for $W_2$ and of $\mathcal{C}$ for $L^2$) depends linearly on the initial datum;
\item (if we suppose that $(X,d,m)$ is a Finsler manifold endowed with its natural distance and its volume measure), $X$ is a Riemannian manifold.
\end{itemize}

As a consequence, Ambrosio, Gigli and Savar\'e proposed in \cite{AGV3} a definition of MMS having a {\it Riemanniann ricci curvature} bounded from below by requiring both to satisfy the $CD(K,\infty)$ condition, and the linearity of the heat flow (this double condition is usually written $RCD(K,\infty)$). This is the notion of infinitesimally Hilbertian space that we mentioned at the end of Section \ref{metricth}.

It is important to observe (but we will not develop this here) that these notions of Ricci bounds (either Riemannian or not) are stable via measured Gromov-Hausdorff convergence (a notion of convergence similar to the Gromov-Hausdorff convergence of metric spaces, but considering the minimal Wasserstein distance between the images of two spaces via isometric embeddings into a same space). This can be surprising at a first sight (curvature bounds are second-order objects, and we are claiming that they are stable via a convergence which essentially sounds like a uniform convergence of the spaces, with a weak convergence of the measures), but not after a simple observation: also the class of convex, or $\lambda$-convex, functions is stable under uniform (or even weak) convergence! but, of course, proving this stability  is not a trivial fact.

\end{document}